\def \N {\mathbb N}
\def \cE {\mathcal E}
\def \cF {\mathcal F}
\def \cX {\mathcal X}
\def \cW {\mathcal W}
\def \cB {\mathcal B}
\def \cC {\mathcal C}
\def \cG {\mathcal G}
\def \cN {\mathcal N}
\def \cK {\mathcal K}
\def \cS {\mathcal S}
\def \cP {\mathcal P}
\def \cH {\mathcal H}
\def \cM {\mathcal M}
\def \cV {\mathcal V}
\def \path {\mbox{\textbf{path}}}
\def \old {{\text{\tiny old}}}
\def \bad {{{\text{\tiny bad}}}}
\def \big {{{\text{\tiny big}}}}
\def \med {{{\text{\tiny med}}}}
\def \huge {{{\text{\tiny huge}}}}
\def \lbl {{{\text{\tiny label}}}}
\newcommand{\dti}{\operatorname{DTI}}
\newcommand{\dist}{\operatorname{dist}}
\newcommand{\supp}{\operatorname{supp}}
\newcommand{\diam}{\operatorname{diam}}
\newcommand{\spn}{\operatorname{span}}
\newcommand{\Z}{\mathbb{Z}}
\newcommand{\R}{\mathbb{R}}
\newtheorem{theorem}{Theorem}[section]
\newtheorem{definition}[theorem]{Definition}
\newtheorem{proposition}[theorem]{Proposition}
\newtheorem{lemma}[theorem]{Lemma}
\newtheorem{corollary}[theorem]{Corollary}
\newtheorem{rem}[theorem]{Remark}
\newcommand{\comments}[1]{}
\def\blfootnote{\gdef\@thefnmark{}\@footnotetext}
\begin{document}

\title{A coordinate-free proof of the finiteness principle for the Whitney extension problem}
\author{Jacob Carruth \\ email: \href{jcarruth@math.utexas.edu}{jcarruth@math.utexas.edu} \and Abraham Frei-Pearson \\ email: \href{afreipearson@math.utexas.edu}{afreipearson@math.utexas.edu} \and Arie Israel \\ email: \href{arie@math.utexas.edu}{arie@math.utexas.edu} \and Bo'az Klartag \\ email: \href{klartagb@tau.ac.il}{klartagb@tau.ac.il}} \blfootnote{
The third author is partially supported by the National Science Foundation under Grant No. 1700404.\\
The fourth author is partially supported by the European Research Foundation under Grant No. 305926.
}
\date{}
\maketitle{}

\renewcommand{\thefootnote}{\fnsymbol{footnote}} 
\footnotetext{\emph{MSC Subject Classifications: }  41A05, 	42B99 }     
\footnotetext{\emph{Keywords:} Interpolation, Whitney's extension problem}     
\renewcommand{\thefootnote}{\arabic{footnote}} 

\begin{abstract}
We present a coordinate-free version of Fefferman's solution of Whitney's extension problem in the space $C^{m-1,1}(\R^n)$. While the original argument relies on an elaborate induction on collections of partial derivatives, our proof uses the language of ideals and translation-invariant subspaces in the ring of polynomials. We emphasize the role of compactness in the proof, first in the familiar sense of topological compactness, but also in the sense of finiteness theorems arising in logic and semialgebraic geometry. 
\end{abstract}

\section{Introduction}

Whitney's extension problem asks, given a subset $E \subset \R^n$ and a function $f:E \rightarrow \R$, how can one determine whether  $f$ admits an extension $F : \R^n \rightarrow \R$ in a prescribed regularity class (e.g., H\"{o}lder, $C^m$, Sobolev, etc.)? In \cite{W1,W2,W3}, H. Whitney developed characterizations for the existence of extensions in the class $C^m$ (i.e., functions which are continuously differentiable up to order $m$). In particular, in dimension $n=1$, he proved that certain natural conditions on the continuity of the finite difference quotients of a function $f : E \rightarrow \R$ (for $E \subset \R$) are necessary and sufficient for the existence of a $C^m$-extension to the real line. In higher dimensions there is no analogue of finite difference quotients and the problem is far more difficult. Several years ago, a complete characterization of $C^m$-extendibility in arbitrary dimensions was developed by C. Fefferman \cite{F3,F5}, building on the work of Y. Brudnyi and P. Shvartsman \cite{BS1,BS2,BS3, BS4, BS5,S8, S7, S10}, who solved the extension problem in $C^{1,1}(\R^n)$, work of G. Glaeser  on $C^1$-extendibility \cite{G}, and work of E. Bierstone, P. Milman, and W. Paw{\l}ucki on $C^m$-extendibility for functions on subanalytic sets \cite{BMP1,BMP2}.

In this article we focus on the H\"{o}lder class $C^{m-1,1}(\R^n)$, consisting of all $C^{m-1}$ functions $F : \R^n \rightarrow \R$ whose $(m-1)$-st order derivatives are Lipschitz continuous. This space is equipped with a seminorm
\begin{equation}\label{norm_defn}
\| F \|_{C^{m-1,1}(\R^n)} := \sup_{x,y \in \R^n}  \left(  \sum_{|\alpha|  =  m-1 } \frac{(\partial^\alpha F(x) - \partial^\alpha F(y))^2}{|x-y|^2} \right)^{\frac{1}{2}}, \quad F \in C^{m-1,1}(\R^n),
\end{equation}
where $|\alpha| := \alpha_1 + \cdots + \alpha_n$ is the order of a multiindex $\alpha = (\alpha_1,\cdots,\alpha_n) \in \Z_{\geq 0}^n$.

In \cite{S6,S7}, Shvartsman studies Whitney's extension problem in the space $C^{1,1}(\R^n)$. One of his main results is the following  \emph{finiteness principle} (see also \cite{BS1,S8}): Suppose that the restriction of a function $f:E \rightarrow \R$ (for $E \subset \R^n$) to every subset $S \subset E$ of cardinality at most $3 \cdot 2^{n-1}$ can be extended to a function $F_S \in C^{1,1}(\R^n)$ with $\| F_S \|_{C^{1,1}(\R^n)} \leq M$. Then the function $f$ itself can be extended to a function $F \in C^{1,1}(\R^n)$ with norm $\| F \|_{C^{1,1}(\R^n)} \leq \gamma(n) M$. Brudnyi and Shvartsman conjectured in \cite{BS2, BS5} (see also \cite{S8, S9, S7}) that a similar result would hold for the entire range of H\"{o}lder spaces (i.e., for all orders of smoothness $m \geq 2$). In \cite{F1}, Fefferman verified their conjecture with the following theorem: 
\begin{theorem}[The Brudnyi-Shvartsman-Fefferman finiteness principle]
\label{main_thm} For any $m,n \geq 1$, there exist constants $C^\# \geq 1$ and $k^\# \in \N$ such that the following holds. 

Let $E \subset \R^n$ and $f : E \rightarrow \R$ be given. Suppose that there exists $M > 0$ so that for all subsets $S \subset E$ satisfying  $\#(S) \leq k^\#$ there exists a function $F^S \in C^{m-1,1}(\R^n)$ with $\| F^S \|_{C^{m-1,1}(\R^n)} \leq M$ and $F^S = f$ on $S$.

Then there exists $F \in C^{m-1,1}(\R^n)$ with $\| F \|_{C^{m-1,1}(\R^n)} \leq C^\# \cdot M$ and $F = f$ on $E$.
\end{theorem}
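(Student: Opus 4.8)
\noindent The plan is to reduce the estimate to the case of finite sets $E$ with constants depending only on $m,n$, and then to prove it by an induction carried out over translation-invariant subspaces of the space of $(m-1)$st degree Taylor polynomials; this is the coordinate-free reorganization of Fefferman's induction on collections of partial derivatives.

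\medskip
\noindent\textbf{Step 1: Reduction to a uniform finite estimate.}
For $E\subseteq\R^n$ and $f:E\to\R$ write
\[
\|f\|_{E}:=\inf\bigl\{\,\|F\|_{C^{m-1,1}(\R^n)}\ :\ F\in C^{m-1,1}(\R^n),\ F|_E=f\,\bigr\}
\]
for the trace seminorm; the conclusion of the theorem is the inequality $\|f\|_E\le C^\#\cdot\sup_{\#S\le k^\#}\|f\|_S$. I would first reduce to the case that $E$ is finite, keeping $C^\#,k^\#$ dependent only on $m,n$: given an exhaustion $E_1\subseteq E_2\subseteq\cdots$ of a general $E$ by finite sets and near-optimal extensions $F_j$ of $f|_{E_j}$, observe that $\|F\|_{C^{m-1,1}(\R^n)}$ is unchanged by adding any polynomial of degree $\le m-1$ (its $(m-1)$st derivatives are constant), subtract from each $F_j$ its degree-$(m-1)$ Taylor polynomial at a fixed point of $E$, apply Arzel\`{a}--Ascoli to the $(m-1)$st derivatives of the resulting functions, and recover a global extension in the limit; the one delicate point, the non-compactness of the space of Taylor polynomials, is handled by working modulo $\{p\in\mathcal{P}:p|_E=0\}$, a finite-dimensional and hence closed subspace of $\R^E$. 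From now on $E$ is finite, and the hypothesis supplies, for every $x\in\R^n$ and every $S\subseteq E$ with $\#S\le k^\#$, a nonempty convex set of admissible jets
\[
\Gamma(x,S):=\bigl\{\,T_xF\ :\ F\in C^{m-1,1}(\R^n),\ \|F\|_{C^{m-1,1}(\R^n)}\le M,\ F|_S=f|_S\,\bigr\}\ \subseteq\ \mathcal{P},
\]
where $\mathcal{P}$ is the space of polynomials on $\R^n$ of degree $\le m-1$ and $T_xF$ the degree-$(m-1)$ Taylor polynomial of $F$ at $x$.

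\medskip
\noindent\textbf{Step 2: The fibres and their Glaeser refinement.}
Each $\Gamma(x,S)$ is convex and Whitney convex in the technical sense of the theory (if $P,P'\in\Gamma(x,S)$ then $\tfrac12(P+P')$, suitably dilated toward $x$, survives the addition of more constraint points at the cost of enlarging the budget by a controlled factor), and $\Gamma(x,S)$ decreases as $S$ grows. By finite-dimensionality of $\mathcal{P}$ one may intersect over enlarging finite subsets and impose a Glaeser-type consistency between nearby points until the process stabilizes, producing a field $x\mapsto\Gamma_\ast(x)$ of nonempty convex subsets of $\mathcal{P}$ for which nonemptiness suffices --- by a Whitney-type gluing, and with a controlled loss in the constant --- to build a global extension with norm $\le C^\# M$. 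The task thus becomes local: certify that $\Gamma_\ast$ is nonempty.

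\medskip
\noindent\textbf{Step 3: The main induction over translation-invariant subspaces.}
Call a linear subspace $V\subseteq\mathcal{P}$ \emph{translation-invariant} if $P(\cdot+v)\in V$ for all $P\in V$ and $v\in\R^n$; equivalently $\partial_iV\subseteq V$ for every $i$, equivalently (Macaulay duality) $V$ is the annihilator of an ideal in the dual ring of constant-coefficient differential operators. The heart of the argument is a \emph{Main Lemma}, proved by induction on such a $V$ --- well-founded since $\dim\mathcal{P}=\binom{m-1+n}{n}<\infty$ --- of the schematic form: \emph{there are constants $C(m,n)$ and $k_0(m,n)$ such that, for every translation-invariant $V$, every cube $Q$, and every $x_0\in Q$, if the fibres over $\tfrac12 Q\cap E$ are confined, after recentering at $x_0$, to an $M$-neighborhood of a single coset $P_0+V$, then there is a local extension $\widetilde{F}$ on $\tfrac12 Q$ with $\widetilde{F}=f$ on $\tfrac12 Q\cap E$, $\|\widetilde{F}\|_{C^{m-1,1}(\R^n)}\le C(m,n)M$, and $T_{x_0}\widetilde{F}$ inside the $C(m,n)M$-neighborhood of $\Gamma_\ast(x_0)$.} For $V=\mathcal{P}$ the hypothesis is vacuous, so the theorem follows by applying the Main Lemma to a cube containing $E$. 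In the inductive step I would run a Calder\'{o}n--Zygmund decomposition of $Q$ whose stopping rule declares a dyadic subcube $Q'$ good as soon as either $\#(3Q'\cap E)\le k_0(m,n)$, where a direct Whitney-type interpolation suffices, or the local fibres over $\tfrac12 Q'\cap E$ can be confined to a neighborhood of a coset of some \emph{proper} translation-invariant subspace $V'\subsetneq V$, where the Main Lemma for $V'$ applies; one then glues the local extensions over the good subcubes by a partition of unity subordinate to a bounded-overlap cover by dilates of those subcubes.

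\medskip
\noindent\textbf{Step 4: Patching, the constant $k^\#$, and the main obstacle.}
For the glued function to have controlled norm one needs the jets selected on overlapping good subcubes to agree, to order $m-1$ and up to the allowed error, across their common faces; this is a \emph{compatibility lemma}, and it is exactly here that the finiteness hypothesis enters, applied to the union of the $O_{m,n}(1)$ ``problem points'' drawn from a bounded family of mutually adjacent subcubes. Consequently each of the at most $\dim\mathcal{P}+1$ levels of the induction consumes a bounded factor in cardinality, so one may take $k^\#$ to be on the order of $k_0(m,n)^{\dim\mathcal{P}+1}$ and $C^\#$ a corresponding product of the constants $C(m,n)$. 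As the paper stresses, the mere \emph{existence} of some finite $k^\#$ also follows from a soft argument: for each fixed cardinality the finite data-configurations violating the principle form a semialgebraic set of bounded complexity in a bounded number of real variables, and a model-theoretic compactness argument over real closed fields --- or a uniform-bound theorem from semialgebraic geometry --- forces one value of $k^\#$ to work for all of them. The genuine difficulty, and the step I expect to be the main obstacle, is the inductive step of the Main Lemma: exhibiting a proper translation-invariant $V'\subsetneq V$ that remains a valid confining subspace on \emph{every} good subcube, and proving the compatibility lemma with constants independent of the combinatorics of $E$. This is the coordinate-free counterpart of Fefferman's induction on monotonic families of multiindices; the payoff of organizing it around translation-invariant subspaces (equivalently, ideals) is that both the strict decrease $V'\subsetneq V$ and the partition-of-unity patching proceed without choosing coordinates or singling out particular derivatives --- but one must still supply the Whitney-convexity estimates that keep the patched jets within a bounded enlargement of the fibres.
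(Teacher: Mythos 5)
Your proposal is a reasonable coordinate-free paraphrase of Fefferman's original label-induction, but it is \emph{not} the argument the paper gives, and the one place you yourself flag as ``the main obstacle''---exhibiting a proper translation-invariant $V'\subsetneq V$ that works on every good subcube---is exactly where the two arguments diverge in an essential way. The paper never requires the transverse subspace to decrease under inclusion as you pass to smaller scales. Instead it replaces the induction over subspaces by an induction over a scalar \emph{complexity} parameter $\cC(E)\in\{0,1,\dots\}$ (Definition \ref{comp_defn}, Remark \ref{comp_reform}), which counts how many times, as the scale shrinks, a DTI subspace that is $R_\lbl$-transverse to $\sigma(x)$ can later fail to be $R_\med$-transverse. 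In the Main Decomposition Lemma (Lemma \ref{mdl}, part (d)) what is shown to drop on each good ball is this \emph{complexity}, not the dimension of any subspace; the DTI subspace chosen at the smaller scale has no a priori inclusion relation with the one chosen on $B_0$. The universal bound $\cC(E)\le K_0$ (Lemma \ref{c_bd_lem} via Proposition \ref{bdd_comp_prop}) is what makes the induction finite, and it is obtained from Tarski--Seidenberg through Theorem \ref{sa_thm}---a bound on the number of connected components of fibers of a semialgebraic set---applied to the one-parameter dilation orbit of an ellipsoid approximating $\sigma(x)$. This is where semialgebraic compactness enters; it does not give a ``soft'' proof of existence of $k^\#$ directly, as your final paragraph suggests, but rather bounds the number of transversality transitions and so the depth of the recursion.

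Two smaller points where your sketch departs from the text. First, Step 2's Glaeser refinement is not used: the convex fibres the paper manipulates are the sets $\Gamma_\ell(x,f,M)$ and $\sigma_\ell(x)$, defined as intersections over constraint subsets of cardinality $\le(D+1)^\ell$ (equations \eqref{gamma_l}, \eqref{sigma_l}); the only ``refinement'' used is Helly's theorem to show nonemptiness (Lemma \ref{gamma_trans_lem}) and the $\ell\mapsto\ell-1$ quasicontinuity \eqref{eq_1113}--\eqref{eq_1114}, not an iterated Glaeser operator. Second, the stopping rule in the Whitney/Calder\'on--Zygmund decomposition is not ``confine the fibres to a coset of a proper $V'$''; it is the ``OK'' condition in Section \ref{proof_mdl} (a transversality statement for $\sigma(z)$ across a range of scales), and the passage to the inductive hypothesis on $6B$ is precisely the complexity drop of part (d). Your patching step (compatibility of the jets $P_B$ across overlapping balls, then a partition-of-unity gluing via Lemmas \ref{pou_lem1}--\ref{pou_lem2}) does match the paper in spirit; but making it work requires the technical Lemma \ref{main_lem}, which controls $\sigma_\ell(x)$ through the introduction of keystone balls and Lemma \ref{fip_lem}, and that mechanism does not appear in your outline. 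In short: the high-level architecture is close, but the paper's decisive new idea---inducting on a semialgebraically bounded count of transversality transitions rather than on the subspace itself---is exactly the device that closes the gap you correctly identify as the hard step, and your proposal does not supply a substitute for it.
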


The finiteness principle says that a function $f: E \rightarrow \R$ admits a $C^{m-1,1}$ extension if and only if for every $k^\#$-point subset $S \subset E$, the restriction $f|_S$ admits a $C^{m-1,1}$ extension with a uniform bound on the seminorm.  The parameters $k^\#$ and $C^\#$ in Theorem \ref{main_thm} are often referred to as \emph{finiteness constants} for the function space $C^{m-1,1}(\R^n)$.

In this article we present a proof of Theorem \ref{main_thm} based on a coordinate-free version of Fefferman's stopping time argument. Our approach emphasizes the metric and symmetry structures of $\R^n$ and shortens several components of the analysis through the use of compactness arguments. Two types of  compactness are relevant here. The first is topological compactness, which is the common compactness used in Analysis. The second is logic-type compactness results from the theory of semialgebraic sets. We will explain how to replace the basis-dependent notion of \emph{monotonic multiindex sets} from Fefferman's argument with the basis-independent notion of \emph{transverse dilation-and-translation-invariant subspaces}. Our use of the latter concept is likely adaptable to the study of extension problems on sub-Riemannian manifolds, where global coordinates may be unavailable. 

Our main result is a finiteness principle for $C^{m-1,1}$-extension on finite subsets $E \subset \R^n$, where the constants depend on a parameter $\cC(E) = \cC_m(E) \in \{0,1,2,\cdots\}$, called the ``complexity'' of $E$. (See section \ref{comp_sec} for the definition of this quantity.)

\begin{theorem}\label{main_thm2}
Fix $m,n \geq 1$. There exist  constants $\lambda_1,\lambda_2 \geq 1$, determined by $m$ and $n$ such that the following holds. Fix a finite set $E \subset \R^n$ and a function $f : E \rightarrow \R$. Set $k^\# = 2^{\lambda_1 \cC(E)}$ and $C^\# = 2^{\lambda_2 \cC(E)}$. Suppose that for all subsets $S \subset E$ with $\#(S) \leq k^\#$ there exists $F^S \in C^{m-1,1}(\R^n)$ with $F^S = f$ on $S$ and $\|F^S \|_{C^{m-1,1}(\R^n)} \leq 1$.  Then there exists a function $F \in C^{m-1,1}(\R^n)$ with $F = f$ on $E$ and $\|F \|_{C^{m-1,1}(\R^n)} \leq C^\#$. 

\end{theorem}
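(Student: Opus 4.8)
The plan is to prove Theorem~\ref{main_thm2} by induction on the complexity $\cC(E) = \cC_m(E)$. It is convenient to first recast the problem in the language of jets and Whitney fields: to $f : E \to \R$ attach the trace seminorm $\|f\|_{\mathrm{tr}} := \inf\{\|F\|_{C^{m-1,1}(\R^n)} : F|_E = f\}$ and its finite-set truncation $\|f\|_{\mathrm{tr},k} := \sup\{\|f|_S\|_{\mathrm{tr}} : S \subseteq E,\ \#(S) \le k\}$; the hypothesis is $\|f\|_{\mathrm{tr},k^\#} \le 1$ and the conclusion is $\|f\|_{\mathrm{tr}} \le C^\#$. After a rescaling we may assume $E$ is contained in a unit cube $Q^\circ$. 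The base case $\cC(E) = 0$ is ``atomic'': the extension problem on $E$ is governed, at the top scale, by a proper transverse DTI subspace of the polynomials $\cP$ of degree $< m$---in the extreme case, $E$ is a single point---and the finiteness principle holds here with absolute constants $k_0 = k_0(m,n)$, $C_0 = C_0(m,n)$, which pins down the minimal admissible $\lambda_1,\lambda_2$.

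For the inductive step I would run the coordinate-free stopping-time construction. Dyadically subdivide $Q^\circ$, and call a dyadic subcube $Q$ \emph{OK} if either $Q^{+}\cap E$ has at most one point (where $Q^{+}$ denotes a fixed small dilate of $Q$), or the local extension problem for $f$ on $Q^{+}\cap E$, measured at scale $\diam(Q)$, is controlled by a \emph{proper} transverse DTI subspace of $\cP$---which is exactly the statement that $\cC_m(Q^{+}\cap E) \le \cC(E) - 1$. Let the Calder\'{o}n--Zygmund (CZ) cubes be the maximal OK dyadic subcubes of $Q^\circ$. Three properties must be established: (i) every sufficiently deep dyadic cube is OK, so the CZ cubes tile $Q^\circ$; (ii) \emph{good geometry}: adjacent CZ cubes have comparable sidelengths and the dilates $\{Q^{+}\}$ have bounded overlap; and (iii) by construction, $\cC_m(Q^{+}\cap E) \le \cC(E) - 1$ for each CZ cube $Q$. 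This is where the notion of complexity and the transversality condition replace Fefferman's monotonic multiindex sets and their basis-dependent bookkeeping: the transversality of the DTI subspace attached to $Q$ is what makes its ``reduction'' of the local problem insensitive to a change of coordinates, hence stable under passing between comparable scales and between neighboring cubes.

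Next, for each CZ cube $Q$ set $E_Q := Q^{+}\cap E$. Since $2^{\lambda_1 \cC_m(E_Q)} \le 2^{\lambda_1(\cC(E)-1)} < k^\#$, every subset $S \subseteq E_Q$ with $\#(S) \le 2^{\lambda_1 \cC_m(E_Q)}$ is an admissible test set for $f$ on $E$, so $f|_{E_Q}$ inherits the finiteness hypothesis at the smaller parameter; by the inductive hypothesis there is $F_Q \in C^{m-1,1}(\R^n)$ with $F_Q = f$ on $E_Q$ and $\|F_Q\|_{C^{m-1,1}(\R^n)} \le 2^{\lambda_2(\cC(E)-1)}$. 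I would then fix a partition of unity $\{\theta_Q\}$ subordinate to $\{Q^{+}\}$ with $|\partial^\alpha\theta_Q| \lesssim \diam(Q)^{-|\alpha|}$ and set $F := \sum_Q \theta_Q F_Q$. That $F = f$ on $E$ is immediate, since $x \in E$ lies in $E_Q$ whenever $\theta_Q(x) \neq 0$. The seminorm bound then reduces, via the standard Whitney patching estimate and property (ii), to controlling, for each pair of neighboring CZ cubes $Q,Q'$, the \emph{consistency terms} $\diam(Q)^{-m}\sup_{Q^{+}\cap (Q')^{+}}|F_Q - F_{Q'}|$ together with their lower-order analogues.

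The heart of the argument---and the step I expect to be the main obstacle---is showing that these consistency terms are $O(1)$, i.e. that for neighboring CZ cubes the jets of $F_Q$ and $F_{Q'}$ at a common boundary point agree to within $O(\diam(Q)^{m})$ in the appropriately rescaled norm. This is where the \emph{full} hypothesis on $E$ at the larger parameter $k^\#$ is used: one produces a single test set $S \subseteq E$ with $\#(S)\le k^\#$ whose points simultaneously probe the local data underlying both $F_Q$ and $F_{Q'}$, compares each of $F_Q,F_{Q'}$ to the common small-set extension $F^S$ furnished by the hypothesis, and invokes a quantitative transversality estimate to close the loop. Compactness enters twice here: ordinary topological compactness, to extract an optimal comparison jet from a bounded family of competitors; and a finiteness theorem from semialgebraic geometry (a ``logic-type'' compactness), to make the resulting constant uniform over the continuum of cube configurations and transverse DTI subspaces that can occur. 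Combining the patching estimate with (ii)--(iii) gives $\|F\|_{C^{m-1,1}(\R^n)} \le C(m,n)\cdot 2^{\lambda_2(\cC(E)-1)}$; choosing $\lambda_1$ large enough to dominate $k_0(m,n)$ and $\lambda_2$ large enough that $2^{\lambda_2} \ge \max\{C(m,n),C_0(m,n)\}$, the induction closes with $k^\# = 2^{\lambda_1\cC(E)}$ and $C^\# = 2^{\lambda_2\cC(E)}$, as required.
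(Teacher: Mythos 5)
Your high-level plan—induct on complexity, run a stopping-time decomposition, solve each local problem by the inductive hypothesis, and patch with a partition of unity—matches the paper's strategy. But there is a genuine gap in the step you yourself flag as ``the main obstacle'': the consistency of the local solutions across neighboring pieces. Your sketch (``produce a single test set $S$ probing both $F_Q$ and $F_{Q'}$, compare each to the common extension $F^S$, invoke a quantitative transversality estimate'') does not actually control $J_z F_Q - J_z F_{Q'}$. The reason is that $F_Q$ and $F^S$ both interpolate $f$ on $E_Q\cap S$, but that constraint only tells you $J_z F_Q - J_z F^S \in 2\sigma_\ell(z)$, and the set $\sigma_\ell(z)$ can be unbounded in directions transverse to neither quantity you have under control. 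Transversality to a DTI subspace $V$ only bounds the \emph{$V$-component} of $\sigma_\ell(z)$, so it gives nothing unless you have first arranged for the jet discrepancies to lie in $V$. This is precisely what the paper's Main Decomposition Lemma, property (g), provides: the jets $P_B$ are not taken raw from the inductive hypothesis, but are \emph{corrected} by adding an element $R_B\in C\,\sigma_{\ell^\#-1}(z_B)$ chosen (using transversality of $\sigma(z_B)$ to $V$) so that $P_0 - P_B\in V$. Then $P_B - P_{B'}\in V$ automatically, and Lemma \ref{FMCJ_lem} can close the estimate via $\left(\sigma_{\ell^*}(z_B) + \cB_{z_B,\diam(B)}\right)\cap V \subset R^{**}\cB_{z_B,\diam(B)}$, which requires the (hard) Lemma \ref{main_lem} that $\sigma_{\ell^*}(x)$ itself—not just $\sigma(x)$—is transverse to $V$ at the fine scales. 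None of this algebraic bookkeeping appears in your proposal.

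This omission is also entangled with your choice of inductive statement. You induct on Theorem \ref{main_thm2} directly, but the paper inducts on the \emph{Local} Main Lemma, which carries a marked basepoint $x_0$ and a prescribed jet $P_0\in\Gamma_{\ell^\#}(x_0,f,1)$ through the recursion. Without the marked-jet formulation, the inductive hypothesis gives you functions $F_Q$ with uncorrelated jets, and there is no mechanism to pin $P_B=P_0$ on the pieces near $x_0$ (property (e)–(f)), let alone to enforce $P_0-P_B\in V$ globally. A second, more minor misstatement: you place the semialgebraic finiteness theorem in the consistency step; in fact it is used only once, in Proposition \ref{bdd_comp_prop}, to prove that the complexity of any $E$ is bounded by a constant $K_0(m,n)$ (which is what reduces Theorem \ref{main_thm2} to Theorem \ref{main_thm}). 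The compactness arguments in the consistency step are Helly's theorem and the Blaschke/Arzel\`a–Ascoli type arguments behind Lemmas \ref{fip_lem}, \ref{lemma1}, \ref{mainlem_1}—no quantifier elimination there.
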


In order to deduce Theorem \ref{main_thm} from Theorem \ref{main_thm2}, we will prove the following lemma:

\begin{lemma}\label{c_bd_lem}
There exists a constant $K_0$, determined only by $m$ and $n$, such that $\cC(E) \leq K_0$ for any finite set $E \subset \R^n$.
\end{lemma}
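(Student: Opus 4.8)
The plan is to extract the bound directly from the recursive construction that defines $\cC(E)$. Recall from Section~\ref{comp_sec} that $\cC(E)$ is produced by an inductive procedure each of whose stages is labelled by a transverse dilation-and-translation-invariant (DTI) subspace $V$ of the space $\cP$ of real polynomials on $\R^n$ of degree at most $m-1$, and that $\cC(E)$ is controlled by the depth of this procedure. The heart of the matter is that the subspace carried by a stage is a \emph{proper} subspace of the one carried by its parent stage; since $\cP$ is finite-dimensional, this alone pins down the depth.

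First I would record the elementary facts that $\dim \cP = \binom{m-1+n}{n} =: D$, a quantity depending only on $m$ and $n$, and that any strictly decreasing chain of linear subspaces $V_0 \supsetneq V_1 \supsetneq \cdots \supsetneq V_\ell$ of $\cP$ satisfies $\ell \le D$, simply because $D \ge \dim V_0 > \dim V_1 > \cdots > \dim V_\ell \ge 0$. Next I would trace through the definition of $\cC(E)$ and verify the descent property: every time the construction refines or branches, the associated transverse DTI subspace strictly decreases in dimension. This is the step where the structural theory of transverse DTI subspaces developed earlier in the paper is used — in particular that transversality prevents the procedure from taking a step that does not genuinely shrink the subspace. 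Together these facts give that each branch of the construction has at most $D$ refinement steps, and hence $\cC(E) \le K_0$ for a constant $K_0 = K_0(m,n)$ obtained from $D$ through the (bounded-depth) bookkeeping in the definition of $\cC(E)$; this $K_0$ is independent of $E$, which is exactly the assertion of the lemma.

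The main obstacle is the descent step itself: one must unwind precisely how $\cC(E)$ is defined and confirm that the labelling subspace really does change at each level, i.e. that the recursion always makes progress measurable inside the finite-dimensional space $\cP$. This is the coordinate-free counterpart of Fefferman's remark that there are only finitely many monotonic multiindex sets; the subtlety here is that the family of transverse DTI subspaces of $\cP$ can be infinite — already a continuum when $n \ge 2$ — so one cannot merely count them and must instead use that, ordered by inclusion, they form a lattice of height at most $D$. A minor additional point is to check that the base case of the recursion, together with any fixed additive overhead in the definition of $\cC(E)$, contributes only $O_{m,n}(1)$.
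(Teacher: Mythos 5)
Your proposal rests on a misreading of the definition of $\cC(E)$. The complexity is \emph{not} produced by a recursive stopping-time procedure in which each stage carries a DTI subspace refining its parent's. Unwinding Definition \ref{comp_defn}, the definition of $\cC(E|B)$, and Definition \ref{comp_def}, $\cC(E)$ is a supremum over basepoints $x$ of the largest integer $K$ for which one can exhibit intervals $I_1 > \cdots > I_K$ in $(0,\delta_0]$ and \emph{arbitrary} subspaces $V_1,\ldots,V_K \subset \cP$ such that, for each $k$, the rescaled set $\tau_{x,r(I_k)}(\sigma(x))$ is $(x,C_*,R_\lbl)$-transverse to $V_k$ while $\tau_{x,l(I_k)}(\sigma(x))$ fails to be $(x,C_*,R_\med)$-transverse to $V_k$. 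The order relation $I_1 > \cdots > I_K$ constrains the intervals only; nothing in the definition forces $V_{k+1} \subsetneq V_k$, or even forces the $V_k$ to be distinct. So there is no strictly decreasing chain of subspaces for your dimension count $D \ge \dim V_1 > \dim V_2 > \cdots$ to latch onto, and the ``descent step'' you identify as the crux of your argument is not a property of the object being bounded. You correctly note that the family of transverse DTI subspaces can be a continuum, and you try to sidestep this by appealing to lattice height under inclusion; but that only helps if the $V_k$ are ordered by inclusion, which the definition does not give you.

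What the paper actually proves, and what you would need, is Proposition \ref{bdd_comp_prop}: a uniform bound on the number of transversality ``flips'' as the diagonal dilation parameter $\delta$ ranges over $(0,\delta_0]$, for a fixed convex set $\Omega$ in a fixed $d$-dimensional space. The proof has three moving parts, none of which is a chain-length argument in $\cP$: (1) replace $\Omega$ by its John ellipsoid $\cE$, transferring the transverse/non-transverse dichotomy at bounded cost via the stability lemmas; (2) replace the continuum of candidate subspaces by a finite $\epsilon$-net $\cN$ in the Grassmanian, again transferring the dichotomy to a nearby net element using Lemma \ref{lem01}; and (3) observe that the set of parameters $(C,A,\overline{R},\delta)$ for which $T_\delta(\cE_A)$ is $\overline{R}$-transverse to the row space $V_C$ is semialgebraic, so that by the Tarski--Seidenberg finiteness theorem (Theorem \ref{sa_thm}) the fibre over each fixed $(C,A,\overline{R})$ has at most $M = M(d,\vec m)$ connected components in $\delta$. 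Each interval $I_k$ witnessing a flip must then meet the finite bad set $\Lambda_{\bad}$ of cardinality at most $\#(\cN)\cdot M$, and since the $I_k$ are ordered there can be at most $2\#(\cN)\cdot M$ of them, giving $K_0 = 2\#(\cN)\cdot M$. This is precisely the coordinate-free analogue of Fefferman's ``finitely many monotonic multiindex sets,'' but the finiteness comes from compactness of the Grassmanian together with o-minimality of the real field, not from an inclusion chain in $\cP$.
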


Together, Theorem \ref{main_thm2} and Lemma \ref{c_bd_lem} imply Theorem \ref{main_thm} in the case when $E$ is a finite subset of $\R^n$ and $M=1$. By a compactness argument involving the Arzela-Ascoli theorem, one can extend this result to infinite sets. Finally, by a trivial rescaling argument we deduce Theorem \ref{main_thm} for arbitrary $M>0$.


Fefferman's proof of Theorem \ref{main_thm} yields the constants $k^\# = \exp(\exp(\gamma D))$ and $C^\# = \exp(\exp(\gamma D))$, where $D  = \binom{n+m-1}{n}$ is the dimension of the jet space for $C^{m-1,1}(\R^n)$, or equivalently, the number of multiindices $(\alpha_1,\cdots,\alpha_n)$ of order at most $m-1$, and $\gamma > 0$ is a numerical constant independent of $m$ and $n$.  Bierstone and Milman \cite{BM1} and Shvartsman \cite{S5} independently obtain the improvement $k^\# = 2^D$ at the expense of multiplying  $C^\#$ by a multiplicative factor
which does not affect the asymptotics $C^\# = O( \exp(\exp( \gamma D)))$. In \cite{FK4}, Fefferman and Klartag show that the finiteness principle fails for $C^\# = 1 + \epsilon$ for a small absolute constant $\epsilon>0$, no matter the choice of $k^\#$.

We apply compactness arguments and algebraic methods to prove our results. For this reason, some of the constants are either inexplicit or depend poorly on $m$ and $n$. In particular, the constant $K_0$ in Lemma \ref{c_bd_lem} is not explicit. By the use of more direct methods (which will lengthen the proofs), it is possible to obtain $K_0 = \exp(\exp( \gamma D))$. This dependence is likely far from optimal. In fact, evidence suggests that it is possible to take $K_0$ to be a polynomial function of the dimension $D$. With additional work one can show that the constants $\lambda_1$ and $\lambda_2$ in Theorem \ref{main_thm2} are harmless polynomial functions of $D$. This leads us to conjecture that the finiteness principle will hold with the constants $k^\# = 2^D$ and $C^\# =  \exp(\mbox{poly}(D))$.


Throughout this paper, we will use symbols  $C,C',c,$ etc., to denote \emph{universal constants} that are determined only by $m$ and $n$. The same symbol may be used to denote a different constant in separate appearances, even within the same line. 
We are grateful to the participants of the Tenth and Eleventh Whitney Problems Workshops for their interest in our work.  We are also grateful to the National Science Foundation and the European Research Foundation for their generous financial support.


\section{Notation, definitions, and preliminary lemmas}\label{sec_not}

We write $C_{p . q}$ to denote the constant $C$ appearing in Lemma p.q, Proposition p.q, Theorem p.q, etc. See \eqref{fix_c} for a central reference of specially designated constants that arise in the last part of the paper.

Let $G \subset \R^n$ be a convex domain with nonempty interior, and let $C^{m-1,1}(G)$ be the space of real-valued functions $F: G \rightarrow \R$ whose $(m-1)$-st order partial derivatives  are Lipschitz continuous. Define a seminorm on $C^{m-1,1}(G)$ by 
\[
\| F \|_{C^{m-1,1}(G)} := \sup_{x,y \in G}  \left(  \sum_{|\alpha|  =  m-1 } \frac{(\partial^\alpha F(x) - \partial^\alpha F(y))^2}{|x-y|^2}  \right)^{\frac{1}{2}} , \;\;\; F \in C^{m-1,1}(G).
\]
The seminorm on $C^{m-1,1}(\R^n)$ is abbreviated by $\| F \| := \| F \|_{C^{m-1,1}(\R^n)}$.

Let $\cP$ be the space of polynomials of degree at most $m-1$ in $n$ real variables. Let us review some of the structure and basic properties of $\cP$. First, $\cP$ is a vector space of dimension $D := \# \{ \alpha \in \Z_{\geq 0}^n : |\alpha| \leq m-1 \}$. For $x \in \R^n$, define an inner product on $\cP$:
\[
\langle P,Q \rangle_x := \sum_{|\alpha| \leq m-1} \frac{1}{\alpha !} \cdot \partial^\alpha P(x) \cdot \partial^\alpha Q(x),
\]
where $\alpha ! = \prod_{i=1}^n \alpha_i !$ and we also set $x^\alpha = \prod_{i=1}^n x_i^{\alpha_i}$. If $P(z) = \sum_{|\alpha| \leq m-1} a_\alpha \cdot (z-x)^\alpha$ and $Q(z) = \sum_{|\alpha| \leq m-1} b_\alpha \cdot (z-x)^\alpha$, then $\langle P,Q \rangle_x = \sum_{|\alpha| \leq m-1} \alpha ! \cdot  a_\alpha b_\alpha$. Therefore, the inner product space $(\cP, \langle \cdot, \cdot \rangle_x)$ admits an orthonormal basis of monomials $\{ \frac{1}{\sqrt{\alpha !} }\cdot (z - x)^\alpha \}_{|\alpha| \leq m-1}$.  We define a norm on $\cP$ by $| P |_x := \sqrt{ \langle P,P \rangle_x}$.

We define translation operators $T_{h} : \cP \rightarrow \cP$ (for $h \in \R^n$) by $T_{h}(P)(z) := P(z-h)$, and dilation operators $\tau_{x,\delta} : \cP \rightarrow \cP$ (for $(x,\delta) \in \R^n \times (0,\infty)$) by $\tau_{x,\delta}(P)(z) := \delta^{-m} P(x + \delta \cdot (z -x))$. The dilation operators lead us to define a scaled inner product on $\cP$: For $(x,\delta) \in \R^n \times (0,\infty)$, let
\[
\langle P, Q \rangle_{x, \delta} := \langle \tau_{x, \delta} (P), \tau_{x, \delta}
(Q) \rangle_x \quad \quad \quad (P, Q \in \cP),
\]
and the corresponding scaled norm is denoted by $|P|_{x, \delta} := \sqrt{ \langle P, P \rangle_{x, \delta}}$. The unit ball associated to this norm is the subset 
\[ 
\cB_{x, \delta} := \biggl\{ P :  \; |P|_{x,\delta} = \biggl( \sum_{|\alpha| \leq m-1} \frac{1}{\alpha !} \cdot ( \delta^{|\alpha| - m} \cdot \partial^{\alpha} P(x) )^2 \biggr)^{\frac{1}{2}} \leq 1 \biggr\} \subset  \cP.
\]
We write $\langle \cdot, \cdot \rangle$ and $| \cdot |$ to denote the ``standard'' inner product $\langle \cdot, \cdot \rangle_{0,1}$ and norm $| \cdot |_{0,1}$ on $\cP$, and $\cB = \cB_{0,1}$ for the corresponding unit ball. 

Given $\Omega \subset \cP$, $P_0 \in \cP$, and $r \in \R$, let $r \Omega := \{ r P : P \in \Omega\}$ and $P_0 + \Omega := \{ P_0 + P : P \in \Omega\}$. For future use, we record below a few identities and inequalities which connect the dilation and translation operators with the scaled inner products, norms, and balls.

\begin{multicols}{2}
  \begin{enumerate}[(a)]
  \item \begin{enumerate}[(i)]
\item $T_{h_1}  \circ T_{h_2} = T_{h_1+h_2}$.
\item $\tau_{x,\delta_1} \circ \tau_{x,\delta_2} = \tau_{x, \delta_1\delta_2}$.
\item $T_h \circ \tau_{x,\delta} = \tau_{x+h,\delta} \circ T_h$.
\end{enumerate}
  \item \begin{enumerate}[(i)]
\item $\langle \tau_{x,\rho} (P), \tau_{x,\rho}(Q) \rangle_{x,\delta} = \langle P, Q \rangle_{x, \delta \rho}$.
\item $| \tau_{x,\rho} (P) |_{x,\delta} = |P|_{x, \delta\rho}$.
\item $\tau_{x,\rho} \cB_{x,\delta} = \cB_{x, \delta/\rho}$.
\end{enumerate}
  \item \begin{enumerate}[(i)]
\item $\langle T_h(P) , T_h(Q) \rangle_{x,\delta} = \langle P,Q \rangle_{x - h,\delta}$.
\item $|T_h(P) |_{x,\delta} = | P |_{x-h,\delta}$.
\item $T_h \cB_{x,\delta} = \cB_{x+h,\delta}$.
\end{enumerate}
\end{enumerate}
\end{multicols}

Furthermore, for any $\delta \geq \rho > 0$,
\begin{equation}
\label{eq:change_delta}
\left\{
\begin{aligned}
&( \rho/\delta )^{m} \cdot |P|_{x,\rho} \leq |P|_{x, \delta} \leq (\rho/\delta) \cdot |P|_{x, \rho}, \; \mbox{and hence} \\
&(\delta/\rho) \cdot \cB_{x,\rho} \subset \cB_{x,\delta} \subset (\delta/\rho)^m \cdot \cB_{x,\rho}.
\end{aligned}
\right.
\end{equation}

Let $J_x F \in \cP$ denote the $(m-1)$-jet of a function $F \in C^{m-1,1}(\R^n)$ at $x$, namely, the Taylor polynomial
\[
(J_x F)(z) := \sum_{|\alpha| \leq m-1} \frac{1}{\alpha!} \cdot \partial^\alpha F(x) \cdot (z-x)^\alpha \quad (z \in \R^n).
\] 
The importance of the norms $| \cdot |_{x,\delta}$ on $\cP$ stems from the Taylor and Whitney theorems. According to Taylor's theorem, if $F \in C^{m-1,1}(G)$, where $G$ is any convex domain in $\R^n$ with nonempty interior, then
\[
| \partial^\beta ( F - J_y F )(x)| \leq C \cdot \| F \|_{C^{m-1,1}(G)} \cdot |x-y|^{m-|\beta|}, \quad \mbox{for } x,y \in G, \; |\beta| \leq m-1.
\]
This implies
\begin{equation}
\label{taylor_thm}
\left\{
\begin{aligned}
& |J_x F - J_y F|_{x, \delta} \leq C_T \|F\|_{C^{m-1,1}(G)} , \mbox{ or equivalently}  \\
& J_x F - J_y F \in C_T \|F\|_{C^{m-1,1}(G)}  \cdot  \cB_{x,\delta} \quad \mbox{for }  x,y \in G, \; \delta \geq |x-y|,
\end{aligned}
\right.
\end{equation}
where $C_T = C_T(m,n)$ is a constant determined by $m$ and $n$. Therefore the norm $| \cdot |_{x,\delta}$ may be used to describe the compatibility conditions on the $(m-1)$-jets of a $C^{m-1,1}$ function at two points $x,y$ in $\R^n$, whenever $|x-y| \leq \delta$. The conditions in \eqref{taylor_thm} capture the essence of the concept of a $C^{m-1,1}$ function in the following sense: Whitney's theorem \cite{W1} states that whenever $E \subset \R^n$ is an arbitrary set, $M > 0$, and $\{P_x\}_{x \in E}$ is a collection of polynomials with
\begin{equation}
\label{whit_cond}
|P_x - P_y|_{x,\delta} \leq M \qquad \mbox{for } x,y \in E, \; \delta = |x-y|,
\end{equation}
then there exists a $C^{m-1,1}$ function $F : \R^n \rightarrow \R$ with $\| F \| \leq CM$ and $J_x F = P_x$ for all $x \in E$. As usual, $C$ is a constant depending solely on $m$ and $n$.

The vector space of $(m-1)$-jets is a ring, denoted by $\cP_x$, equipped with the product $\odot_x$ (indexed by a basepoint $x \in \R^n$) defined by $P \odot_x Q = J_x( P \cdot Q)$. The product and translation/dilation operators are related by
\begin{equation}
\left\{
\begin{aligned}
&\tau_{x, \delta} \left( P \odot_x Q \right) = \delta^{m} \cdot \tau_{x, \delta}(P) \odot_x \tau_{x, \delta}(Q), \\
&T_h\left( P \odot_x Q \right)  = T_h(P) \odot_{x+h} T_h(Q) \qquad\qquad  \mbox{for } x, h \in \R^n, \; \delta>0.
\end{aligned} \label{eq_2115}
\right.
\end{equation}
The following lemma, taken verbatim from \cite[section 12]{FK2}, summarizes a few basic properties of the product and norms  introduced above. See the proof of Lemma 1 in \cite[section 12]{FK2} for a direct argument that leads to explicit constants. Our argument below emphasizes the r\^ole of rescaling and compactness.

\begin{lemma}
Let $x,y \in \R^n$ and $\delta, \rho > 0$. Assume that $|x-y| \leq \rho \leq \delta$. Then for any $P, Q \in \cP$,
\begin{enumerate}[(i)]
\item $\displaystyle |P|_{y, \rho} \leq C |P|_{x, \rho}$.
\item $\displaystyle \left|P \odot_x Q \right|_{x, \rho} \leq C \delta^{m} |P|_{x, \delta} |Q|_{x,
\rho}$.
\item $\displaystyle  \left|(P \odot_y Q) - (P \odot_x Q) \right|_{x, \rho}  \leq C \delta^{m} |P|_{x, \delta} |Q|_{x,
\delta}$.
\end{enumerate} \label{lem_c1}
Here, $C > 0$ is a constant depending solely on $m$ and $n$.
\end{lemma}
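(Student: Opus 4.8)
The plan is to prove all three inequalities by a two-step strategy: first reduce to a normalized scale via the dilation operators, and then invoke a compactness argument on a fixed compact parameter set. Throughout, the basic identities (a)--(c), the relations \eqref{eq_2115}, and the comparison \eqref{eq:change_delta} are the only tools needed beyond compactness.

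\textbf{Step 1: Rescaling to a normalized configuration.} Given $x,y,\delta,\rho$ with $|x-y|\le\rho\le\delta$, I would apply an affine change of variables to reduce to the case $x = 0$, $\rho = 1$, and $y$ in the closed unit ball. Concretely, using the dilation operator $\tau_{x,\rho}$ together with identities (b)(i)--(ii) and (a)(iii), one checks that the quantities $|P|_{y,\rho}$, $|P|_{x,\delta}$, and $|P \odot_x Q|_{x,\rho}$ transform in a controlled, purely multiplicative way under $P \mapsto \tau_{x,\rho}(P)$: norms at scale $\rho$ become norms at scale $1$, norms at scale $\delta$ become norms at scale $\delta/\rho \ge 1$, and the product $\odot_x$ rescales by a power of $\rho$ according to the first line of \eqref{eq_2115}. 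After translating by $-x$ (using (c)(i)--(ii) and the second line of \eqref{eq_2115}), we are left with the problem: for $z \in \R^n$ with $|z| \le 1$ and $t := \delta/\rho \ge 1$, bound $|P|_{z,1}$ by $|P|_{0,1}$; bound $|P \odot_0 Q|_{0,1}$ by $t^m |P|_{0,t}|Q|_{0,1}$; and bound $|(P\odot_z Q) - (P\odot_0 Q)|_{0,1}$ by $t^m |P|_{0,t}|Q|_{0,t}$, with constants independent of $z$ and $t$. The factor $\delta^m$ in the statement is exactly the residue of this rescaling, and the appearance of $|\cdot|_{x,\delta}$ rather than $|\cdot|_{x,\rho}$ on the right side is what gives us the room to let $t \to \infty$.

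\textbf{Step 2: Compactness at the normalized scale.} For fixed $z$ with $|z|\le 1$, the map $P \mapsto |P|_{z,1}$ is a norm on the finite-dimensional space $\cP$, hence comparable to $|\cdot|_{0,1}$; since $z$ ranges over the compact set $\overline{\cB(0,1)} \subset \R^n$ and the comparison constant depends continuously on $z$, it is bounded uniformly, giving (i). For (ii) and (iii), the key observation is that $|P|_{0,t} \ge t^{1-m}\,\delta\text{-type bound}$ — more precisely, by \eqref{eq:change_delta} with the roles of the scales as appropriate, $t^m |P|_{0,t} \ge |P|_{0,1}$ for $t \ge 1$ (the relevant inequality being $(\rho/\delta)^m|P|_{x,\rho} \le |P|_{x,\delta}$ applied with $\rho = 1$, $\delta = t$). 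Hence it suffices to prove the bounds with $t^m|P|_{0,t}$ replaced by $|P|_{0,1}$ and $t^m|Q|_{0,t}$ by $|Q|_{0,1}$ in the case $t = 1$, i.e. to show $|P\odot_0 Q|_{0,1} \le C|P|_{0,1}|Q|_{0,1}$ and $|(P\odot_z Q)-(P\odot_0 Q)|_{0,1}\le C|P|_{0,1}|Q|_{0,1}$ uniformly for $|z|\le 1$. Both are continuous bilinear (resp. jointly continuous) expressions in $(P,Q) \in \cP \times \cP$ with a parameter $z$ in a compact set, so by homogeneity it is enough to maximize a continuous function over the compact set $\{|P|_{0,1}=1\} \times \{|Q|_{0,1}=1\} \times \overline{\cB(0,1)}$; the maximum is finite, and we take $\widetilde C$ to be this maximum (enlarged to absorb the constants from Step 1). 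One must double-check that the residual power of $t$ from Step 1 combines correctly with the $t^m$ already present so that nothing blows up as $t \to \infty$; this is the one place where keeping careful track of exponents matters.

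\textbf{Main obstacle.} The genuine content is modest — the statement is qualitative, so no sharp constants are needed — and the only real care required is bookkeeping: getting the powers of $\rho$ and $\delta$ to line up after the rescaling in Step 1, and confirming that the inequality $t^m|P|_{0,t}\ge|P|_{0,1}$ (for $t\ge1$) from \eqref{eq:change_delta} points the right way so that replacing $t^m|P|_{0,t}$ by the larger-looking but in fact controlling quantity is legitimate. Once the reduction to a compact parameter space is set up cleanly, existence of the constant $\widetilde C$ is immediate from continuity and compactness, with no explicit computation. The alternative direct approach — expanding $P\odot_xQ$, $P\odot_yQ$ in coordinates and estimating term by term as in \cite[section 12]{FK2} — works but is exactly the kind of coordinate-dependent calculation this paper is designed to avoid.
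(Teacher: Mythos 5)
Parts (i) and (ii) of your plan are sound and essentially match the paper: rescale to $x=0$, $\rho=1$, normalize $|P|_{0,1}=|Q|_{0,1}=1$, and maximize a continuous function over a compact set; for (ii) the unbounded parameter $t=\delta/\rho\ge 1$ is harmless because the inequality $t^m|P|_{0,t}\ge |P|_{0,1}$ from \eqref{eq:change_delta} absorbs the single scale-$\delta$ factor.

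Part (iii), however, has a real gap, and it is exactly at the spot you flagged as "bookkeeping." After your rescaling to $\rho=1$, the target is
\[
\bigl|(P\odot_z Q)-(P\odot_0 Q)\bigr|_{0,1} \;\le\; C\, t^m\,|P|_{0,t}\,|Q|_{0,t}, \qquad |z|\le 1,\;\; t\ge 1,
\]
and the right-hand side carries only one factor of $t^m$. Using $t^m|P|_{0,t}\ge|P|_{0,1}$ disposes of the $P$-factor, but what remains is $|Q|_{0,t}$, and \eqref{eq:change_delta} gives $|Q|_{0,t}\le t^{-1}|Q|_{0,1}$, so $|Q|_{0,t}$ is \emph{smaller} than $|Q|_{0,1}$; replacing it by $|Q|_{0,1}$ enlarges the right-hand side and produces a strictly weaker inequality, which does not imply the one you need. (If you read the right-hand side as $t^m|P|_{0,t}\cdot t^m|Q|_{0,t}$, then both factors could indeed be replaced — but then the exponent on $\delta$ in the lemma would be $2m$, which it is not.) The reduced inequality $\bigl|(P\odot_zQ)-(P\odot_0Q)\bigr|_{0,1}\le C\,|P|_{0,1}\,|Q|_{0,t}$ degenerates as $t\to\infty$, so it cannot be obtained by maximizing a continuous function over a compact set: the parameter $t$ ranges over $[1,\infty)$, and there is no compactification that leaves the $\odot_z$ product in a fixed form.

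The paper's proof of (iii) is the same strategy with one crucial change: it rescales so that $\delta=1$, not $\rho=1$. Then the free scale is $\rho\in(0,1]$ and the free point is $y$ with $|y|\le\rho\le1$, both compact. It then does \emph{not} try to maximize the target expression directly; instead, it sets $F=PQ$, uses compactness only to get the a priori bound $\|F\|_{C^{m-1,1}(B)}\le C_0$ over the compact set $\{|P|_{0,1}=|Q|_{0,1}=1\}$, and lets Taylor's theorem \eqref{taylor_thm} do the scale-dependent work: $|J_y F - J_0 F|_{0,\rho}\le C_T C_0$ uniformly for $|y|\le\rho\le 1$. Taylor's theorem is precisely what converts the single compactness bound into an estimate that holds at every intermediate scale $\rho$; a naive sup-over-compact-set argument, even with the right normalization, does not provide that for free. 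If you switch your normalization in (iii) to $\delta=1$ and invoke Taylor's theorem as above, the proof closes.
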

\begin{proof}
The main step is to use \eqref{eq_2115} and observe that by translating and rescaling, we may reduce matters to the case $x =0$ and $\rho = 1$.  Next, note that it suffices to prove the lemma for non-zero polynomials $P$ and $Q$. Normalizing, we assume that $|P|_{0,1} = |Q|_{0,1} = 1$.

To prove (i), observe that the space of all relevant parameters is compact, since $|y| \leq 1$ and $|P|_{0,1} = 1$. The left-hand side of (i) is a continuous function on this space of parameters, hence the maximum is attained, and yields the constant $C$ on the right-hand side. 

To prove (ii), observe that the left-hand side in (ii) is bounded from above by a constant $C$ by compactness, while
\[
\delta^{m} |P|_{0, \delta} \geq |P|_{0, 1} = 1
\]
for any $\delta \geq 1$, according to \eqref{eq:change_delta}. Hence (ii) holds true as well.

To prove (iii), it is more convenient to rescale so that $\delta = 1$, rather than $\rho = 1$. We may still assume that $|P|_{0,1} = |Q|_{0,1} = 1$. Consider the unit ball $B = \{ x \in \R^n : |x| \leq 1 \}$ and the function $F(x) = P(x) Q(x)$. Yet another compactness argument yields that $\| F \|_{C^{m-1,1}(B)} \leq C_0$ for a constant $C_0$ determined by $m$ and $n$. From Taylor's theorem, rendered above as \eqref{taylor_thm},
\[
\left| (P \odot_y Q) - (P \odot_0 Q) \right|_{0, \rho} = \left| J_y F - J_0 F \right|_{0, \rho} \leq C_T \cdot C_0,
\]
and the lemma is proven.
\end{proof}

Suppose $|x-y| \leq \lambda \delta$ for $\lambda \geq 1$. By \eqref{eq:change_delta} we have $|P|_{y,\delta} \leq \lambda^m |P|_{y,\lambda \delta}$ and $ |P|_{x, \lambda \delta} \leq \lambda^{-1} |P|_{x, \delta}$. Furthermore, by case (i) of Lemma \ref{lem_c1} we have $|P|_{y,\lambda \delta} \leq C |P|_{x, \lambda \delta}$. Combining these estimates gives  the inequality
\begin{equation}
\label{trans_norm}
 |P|_{y,\delta} \leq C \lambda^{m-1} |P|_{x,\delta} \qquad (x,y \in \R^n, \; |x-y| \leq \lambda \delta, \; \lambda \geq 1, \; \delta > 0).
\end{equation}
We note that \eqref{trans_norm} is equivalent  to the inclusion $\cB_{x,\delta} \subset C  \lambda^{m-1} \cB_{y,\delta}$. 

Suppose $\theta \in  C^{m-1,1}(\R^n)$  is supported on a ball $B \subset \R^n$.  Then we claim that
\begin{equation} 
\label{cutoff_jet}
| J_x(\theta) |_{x,\diam(B)} \leq C_T \| \theta \| \qquad (x \in \R^n).
\end{equation}
The inequality \eqref{cutoff_jet} is trivial if $x \in \R^n \setminus B$, as then $J_x(\theta) = 0$. Fix $x_0 \in \partial B$. Then $J_{x_0}(\theta) = 0$. Using that $|x-x_0| \leq \diam(B)$ for $x \in B$, we apply Taylor's theorem (rendered as \eqref{taylor_thm}) and obtain $ | J_x(\theta) |_{x,\diam(B)} =  | J_x(\theta)  -  J_{x_0}(\theta) |_{x,\diam(B)} \leq C_T \| \theta \|$, which yields \eqref{cutoff_jet}.

We next give a more general form of Lemma \ref{lem_c1}(iii) involving products of up to three polynomials which are allowed to vary from point to point.
\begin{lemma}\label{lem_prod_cont}
Fix polynomials $P_x,Q_x,R_x$ and $P_y,Q_y,R_y$ in $\cP$, for $|x-y| \leq \rho \leq \delta$. Suppose that $P_x,P_y \in M_0 \cB_{x,\delta}$, $Q_x,Q_y \in M_1 \cB_{x,\delta}$, and $R_x,R_y \in M_2\cB_{x,\delta}$. Also suppose that $P_x - P_y \in M_0 \cB_{x,\rho}$, $Q_x - Q_y \in M_1 \cB_{x,\rho}$, and $R_x - R_y \in M_2  \cB_{x,\rho}$. Then
\[
| P_x \odot_x Q_x \odot_x R_x - P_y \odot_y Q_y \odot_y R_y |_{x,\rho} \leq C \delta^{2m} M_0 M_1 M_2,
\]
where $C$ is a constant determined by $m$ and $n$.
\end{lemma}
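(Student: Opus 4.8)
The plan is to reduce the three-fold product to a telescoping sum of differences in which only one factor changes at a time, and then estimate each term using Lemma \ref{lem_c1}(ii), Lemma \ref{lem_c1}(iii), and the triangle inequality. Concretely, I would write
\[
P_x \odot_x Q_x \odot_x R_x - P_y \odot_y Q_y \odot_y R_y
\]
as a sum of terms of the two types: (a) terms where the basepoint is held fixed (say at $x$) and one polynomial factor is replaced by the corresponding difference $P_x - P_y$, $Q_x - Q_y$, or $R_x - R_y$, and (b) a single term capturing the change of basepoint from $x$ to $y$, namely $P_y \odot_x Q_y \odot_x R_y - P_y \odot_y Q_y \odot_y R_y$. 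For the type (a) terms, one applies Lemma \ref{lem_c1}(ii) twice: for instance $|(P_x - P_y) \odot_x Q_x \odot_x R_x|_{x,\rho}$ is handled by first peeling off $R_x$ (using that $R_x \in M_2 \cB_{x,\delta}$, i.e. $|R_x|_{x,\delta} \le M_2$) and then peeling off $Q_x$, landing on $|P_x - P_y|_{x,\rho} \le M_0$, for a total bound of $C\delta^{2m} M_0 M_1 M_2$. Here I need to be a little careful about the order of associativity and which norm ($|\cdot|_{x,\rho}$ versus $|\cdot|_{x,\delta}$) is used at each stage, but \eqref{eq:change_delta} lets one pass freely from $\rho$ to $\delta$ at the cost of harmless powers of $\rho/\delta \le 1$.

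For the type (b) term — the genuine basepoint change — I would iterate Lemma \ref{lem_c1}(iii) in the same telescoping spirit. Writing $A = P_y$, $B = Q_y$, $C' = R_y$, I decompose
\[
A \odot_x B \odot_x C' - A \odot_y B \odot_y C'
= \bigl(A \odot_x (B \odot_x C') - A \odot_y (B \odot_x C')\bigr) + A \odot_y \bigl((B \odot_x C') - (B \odot_y C')\bigr),
\]
treating $B \odot_x C'$ as a single polynomial in the first bracket. The first bracket is exactly the form controlled by Lemma \ref{lem_c1}(iii) with the roles of $P$ and $Q$ played by $A$ and $B \odot_x C'$; its $|\cdot|_{x,\rho}$-norm is at most $\widetilde C \delta^m |A|_{x,\delta} |B \odot_x C'|_{x,\delta}$, and $|B \odot_x C'|_{x,\delta} \le \widetilde C \delta^m |B|_{x,\delta}|C'|_{x,\delta}$ by Lemma \ref{lem_c1}(ii) (with $\rho$ there taken equal to $\delta$), giving $C \delta^{2m} M_0 M_1 M_2$. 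The second bracket: $A \odot_y$ (something) has $|\cdot|_{x,\rho}$-norm estimated via Lemma \ref{lem_c1}(i) by $\widetilde C$ times its $|\cdot|_{y,\rho}$-norm, then by Lemma \ref{lem_c1}(ii) at basepoint $y$ this is $\le C\delta^m |A|_{y,\delta} \cdot |(B\odot_x C') - (B \odot_y C')|_{y,\rho}$; finally the inner difference is again Lemma \ref{lem_c1}(iii). One must also convert the hypotheses $P_y \in M_0 \cB_{x,\delta}$ etc.\ into bounds at basepoint $y$, which is precisely \eqref{trans_norm} (with $\lambda = 1$) and costs only a universal constant.

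Summing the finitely many terms gives the claimed bound $C \delta^{2m} M_0 M_1 M_2$ with $C = C(m,n)$. The main obstacle — really just bookkeeping — is tracking the basepoints and scales consistently through the nested applications of Lemma \ref{lem_c1}: each application of (ii) introduces a factor $\delta^m$ (after using \eqref{eq:change_delta} to replace the auxiliary scale by $\delta$), so one has to make sure exactly two such factors accumulate, matching the $\delta^{2m}$ in the statement, and that the final norm is measured at $(x,\rho)$ as required. A cleaner but essentially equivalent route, mirroring the proof of Lemma \ref{lem_c1} itself, would be to rescale to $x = 0$, $\rho = 1$, normalize $M_0 = M_1 = M_2 = 1$, and invoke a single compactness argument on the resulting parameter space together with Taylor's theorem \eqref{taylor_thm} applied to $F = P Q R$ on the unit ball; I would likely present the telescoping argument since it is self-contained and makes the $\delta^{2m}$ scaling transparent.
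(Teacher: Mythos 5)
Your proposal is correct and is essentially the same proof as the paper's: both telescope the full difference into pieces where one polynomial factor or one basepoint changes at a time, then estimate each piece with Lemma \ref{lem_c1}(ii) (for factor swaps, with the swapped polynomial placed in the $|\cdot|_{x,\delta}$ slot so that its difference sits in the $|\cdot|_{x,\rho}$ slot) and Lemma \ref{lem_c1}(iii) (for the basepoint change). The only cosmetic difference is that the paper first normalizes $\delta = 1$ and $M_0 = M_1 = M_2 = 1$ via \eqref{eq_2115} and scaling, which suppresses the $\delta^{2m}$ bookkeeping you track by hand; your observation that the correct peeling order keeps all $\rho\leftrightarrow\delta$ conversions in the favorable direction is exactly the point that makes the unnormalized version go through.
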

\begin{proof}
In view of \eqref{eq_2115}, we may assume that $\delta = 1$. By renormalizing, we may assume $M_0=M_1=M_2=1$. Then all six polynomials belong to $\cB_{x,1}$, and the three differences $P_x - P_y$, $Q_x-Q_y$, and $R_x-R_y$ belong to $\cB_{x,\rho}$. The letter $x$ appears five times in the expression $P_x \odot_x Q_x \odot_x R_x$, and we will change these five $x$'s to five $y$'s one by one. We first apply Lemma \ref{lem_c1}(ii) three times and replace $R_x$, $Q_x$, and $P_x$ by $R_y$, $Q_y$, and $P_y$, in that respective order, as follows:
\[
| P_x \odot_x Q_x \odot_x R_x - P_y \odot_x Q_y \odot_x R_y |_{x,\rho} \leq C.
\]
This step also requires the bounds $|P_x \odot_x Q_x |_{x,1} \leq C$, $|P_x \odot_x R_y |_{x,1} \leq C$, and $|Q_y \odot_x R_y |_{x,1} \leq C$, which are all consequences of Lemma \ref{lem_c1}(ii). Next we apply Lemma \ref{lem_c1}(iii) twice, and deduce that
\[
|P_y \odot_x Q_y \odot_x R_y - P_y \odot_y Q_y \odot_y R_y|_{x,\rho} \leq C.
\]
This step requires the bounds $|P_y \odot_x Q_y|_{x,1} \leq C$ and $| Q_y \odot_y R_y|_{x,1} \leq C$, which follow from Lemma \ref{lem_c1}(ii) and, for the second inequality, also Lemma \ref{lem_c1}(iii). This concludes the proof of the lemma.
\end{proof}
\begin{rem}\label{rem1}
We can obtain a version of Lemma \ref{lem_prod_cont} also for products of two polynomials. Notice that $1 \in \delta^{-m} \cB_{x,\delta}$ for any $\delta > 0$. Thus, by taking $P_x=P_y=1$, under the hypotheses of Lemma \ref{lem_prod_cont}, $ | Q_x \odot_x R_x - Q_y \odot_y R_y |_{x,\rho} \leq C \delta^{m} M_1 M_2$.
\end{rem}

Finally, we state a few elementary facts from convex geometry. A convex set $\Omega$ in a finite-dimensional vector space $\cV$ is said to be \emph{symmetric} if $P \in \Omega \implies - P \in \Omega$. If $A$, $K$, and $T$ are symmetric convex sets then
\begin{equation}
\label{fact1}
K \subset T \implies (A+K)\cap T \subset (A \cap 2T) + K,
\end{equation}
and also if $K$ is bounded then
\begin{equation}
\label{fact2}
K \subset T + K/3 \implies K \subset 2T.
\end{equation}
To prove \eqref{fact1}, pick $x \in (A + K) \cap T$. Then $x = a + k$ with  $a \in A$ and $k \in K$. It suffices to show that $a \in 2T$. This holds since $a = x - k \in T - K \subset 2T$. Next observe that the condition $K \subset T + K/3$ implies $\sup_{x \in K} f(x) \leq \sup_{x \in T} f(x) + \frac{1}{3} \sup_{x \in K} f(x)$ for any linear functional $f: \cV \rightarrow \R$. If $K$ is bounded, this implies $\frac{2}{3} \sup_{x \in K} f(x) \leq \sup_{x \in T} f(x)$. From the Hahn-Banach theorem, $K$ is contained in the closure of $\frac{3}{2} T$, and therefore $K \subset 2 T$.

\subsection{Taylor polynomials of functions with prescribed values.}

Fix a finite subset $E \subset \R^n$ and a function $f : E \rightarrow \R$ satisfying the hypothesis of Theorem \ref{main_thm2}. That is, we assume that for some natural number $k^\# \in \N$, the following holds:
\begin{equation}\label{fin_hyp}
\cF\cH(k^\#) \left\{
\begin{aligned}
  &\quad \mbox{For all } S \subset E \mbox{ with } \#(S) \leq k^\#  \mbox{ there exists } F^S \in C^{m-1,1}(\R^n)  \\
& \qquad \mbox{ with }  F^S = f \mbox{ on } S \mbox{ and } \|F^S \| \leq 1.
\end{aligned}
\right.
\end{equation}
We call $\cF\cH(k^\#)$ the \emph{finiteness hypothesis} and $k^\#$ the \emph{finiteness constant}. We aim to construct a function $F \in C^{m-1,1}(\R^n)$ satisfying $F = f$ on $E$ and $\| F \| \leq C^\#$ for a suitable constant $C^\# \geq 1$. We first introduce a family of convex subsets of $\cP$ that contain information on the Taylor polynomials of extensions associated to subsets of $E$:
\begin{align*}
\Gamma_S(x,f,M) := \{ J_x F : & F \in C^{m-1,1}(\R^n), \;  F=f \; \mbox{on} \; S, \;  \| F \| \leq M \}, \\
&  \mbox{for} \; S \subset E, \; x \in \R^n, \; f : E \rightarrow \R, \; \mbox{and } M > 0.
\end{align*} 
We also denote $\Gamma(x,f,M) := \Gamma_E(x,f,M)$. Notice that $\Gamma_S(x,f,M)$ is nonempty if and only if there exists an extension of the restricted function $f|_S$ with $C^{m-1,1}$ seminorm at most $M$. Therefore the finiteness hypothesis $\cF\cH(k^\#)$ is equivalent to the condition that $\Gamma_S(x,f,1) \neq \emptyset$ for all $S \subset E$ with $\#(S) \leq k^\#$. Now, for $\ell \in \Z_{\geq 0}$ we define
\[
\begin{aligned}
\Gamma_\ell(x,f,M) := \{ P \in \cP : \forall S \subset E, \;  \#(S) \leq & (D +1)^\ell, \; \exists F^S \in C^{m-1,1}(\R^n), \; \\
& F^S = f \; \mbox{on} \; S, \; J_x F^S = P, \; \| F^S \| \leq M \};
\end{aligned}
\]
here, recall that $D = \dim \cP$. In other words, an element of $\Gamma_\ell(x,f,M)$ is simultaneously the jet of a solution to any extension problem associated to a subset $S \subset E$ of cardinality at most $(D+1)^\ell$. The sets denoted by $\Gamma_\ell(\cdot,\cdot,\cdot)$ were introduced in  \cite{F1} as a tool to demonstrate that $\Gamma(x,f,M)$ is nonempty -- the latter condition is relevant because it implies, in particular, the existence of an extension of $f$ with $C^{m-1,1}$ seminorm at most $M$. We note the identity
\begin{equation}\label{gamma_l}
\Gamma_\ell(x,f,M) = \bigcap_{ S \subset E, \; \#(S) \leq (D+1)^\ell} \Gamma_S(x,f,M).
\end{equation}
Given $x \in \R^n$ and $S \subset E$, let 
\[
\sigma(x,S) := \{ J_x \varphi :  \varphi \in C^{m-1,1}(\R^n), \; \varphi = 0 \; \mbox{on} \; S, \; \| \varphi \| \leq 1 \},
\] 
and given $\ell \in \Z_{\geq 0}$, let 
\begin{equation}\label{sigma_l}
\sigma_\ell(x) = \bigcap_{ S \subset E, \; \#(S) \leq (D+1)^\ell} \sigma(x,S).
\end{equation}
We also denote $\sigma(x) := \sigma(x,E)$. 

Note that $\sigma(x)$ and $\sigma_\ell(x)$ are symmetric convex sets in $\cP$, whereas $\Gamma(x,f,M)$ and $\Gamma_\ell(x,f,M)$ are merely convex. By a straightforward application of the Arzela-Ascoli theorem one can show that $\sigma(x)$, $\sigma_\ell(x)$, $\Gamma(x,f,M)$, and $\Gamma_\ell(x,f,M)$ are closed. Finally, we observe that $\sigma(x,S) = \Gamma_S(x,0,1)$, $\sigma_\ell(x) = \Gamma_\ell(x,0,1)$, and $\sigma(x) = \Gamma(x,0,1)$. 

\begin{lemma}[Relationship between $\Gamma_\ell$ and $\sigma_\ell$] \label{gamma-sigma_lem} For any $\ell \in \Z_{\geq 0}$,
\[
\begin{aligned}
&\Gamma_\ell(x,f,M/2) + (M/2) \sigma_\ell(x) \subset \Gamma_\ell(x,f,M), \;\; \mbox{and}\\
&\Gamma_\ell(x,f,M) - \Gamma_\ell(x,f,M) \subset 2M \sigma_\ell(x).
\end{aligned}
\]
\end{lemma}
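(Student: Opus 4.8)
The plan is to prove both inclusions by unwinding the definitions of $\Gamma_\ell$ and $\sigma_\ell$ and exploiting the identity \eqref{gamma_l}, i.e. $\Gamma_\ell(x,f,M) = \bigcap_{S} \Gamma_S(x,f,M)$ where $S$ ranges over subsets of $E$ with $\#(S) \le (D+1)^\ell$, together with the analogous \eqref{sigma_l} for $\sigma_\ell$. Since an intersection of sumset-type inclusions behaves well under intersections, it suffices to establish, for each fixed $S$ with $\#(S) \le (D+1)^\ell$, the two ``single-$S$'' statements
\[
\Gamma_S(x,f,M/2) + (M/2)\,\sigma(x,S) \subset \Gamma_S(x,f,M), \qquad \Gamma_S(x,f,M) - \Gamma_S(x,f,M) \subset 2M\,\sigma(x,S),
\]
and then intersect over all such $S$. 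For the second inclusion this intersection is immediate; for the first one needs the elementary fact that $(\bigcap_S A_S) + B \subset \bigcap_S (A_S + B)$ and that $(M/2)\sigma_\ell(x) \subset (M/2)\sigma(x,S)$ for every $S$ appearing in \eqref{gamma_l}, while $\Gamma_\ell(x,f,M/2) \subset \Gamma_S(x,f,M/2)$; so $\Gamma_\ell(x,f,M/2) + (M/2)\sigma_\ell(x) \subset \Gamma_S(x,f,M/2) + (M/2)\sigma(x,S) \subset \Gamma_S(x,f,M)$ for each $S$, hence the containment in the intersection $\Gamma_\ell(x,f,M)$.

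For the first single-$S$ inclusion: take $P \in \Gamma_S(x,f,M/2)$ and $Q \in (M/2)\sigma(x,S)$. By definition there is $F \in C^{m-1,1}(\R^n)$ with $F = f$ on $S$, $\|F\| \le M/2$, $J_x F = P$; and there is $\varphi \in C^{m-1,1}(\R^n)$ with $\varphi = 0$ on $S$, $\|\varphi\| \le M/2$, $J_x \varphi = Q$ (using that $(M/2)\sigma(x,S) = \{J_x\psi : \psi=0 \text{ on } S, \|\psi\| \le M/2\}$ by scaling the definition of $\sigma(x,S)$). Then $G := F + \varphi$ satisfies $G = f$ on $S$, $\|G\| \le \|F\| + \|\varphi\| \le M$ by the triangle inequality for the seminorm, and $J_x G = J_x F + J_x \varphi = P + Q$; hence $P + Q \in \Gamma_S(x,f,M)$, as desired.

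For the second single-$S$ inclusion: take $P_1, P_2 \in \Gamma_S(x,f,M)$, realized by $F_1, F_2 \in C^{m-1,1}(\R^n)$ with $F_i = f$ on $S$, $\|F_i\| \le M$, $J_x F_i = P_i$. Set $\psi := F_1 - F_2$; then $\psi = 0$ on $S$, $\|\psi\| \le \|F_1\| + \|F_2\| \le 2M$, and $J_x \psi = P_1 - P_2$. Thus $(P_1 - P_2)/(2M) \in \sigma(x,S)$ by the definition of $\sigma(x,S)$ (here we use $M > 0$; the case $\Gamma_S = \emptyset$ is vacuous), i.e. $P_1 - P_2 \in 2M\,\sigma(x,S)$. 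Intersecting over all admissible $S$ gives $\Gamma_\ell(x,f,M) - \Gamma_\ell(x,f,M) \subset \bigcap_S 2M\,\sigma(x,S) = 2M\,\sigma_\ell(x)$.

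I do not anticipate a genuine obstacle here; the lemma is essentially a bookkeeping exercise combining the additive/subtractive structure of the jet map $F \mapsto J_x F$, the subadditivity of the seminorm $\|\cdot\|$, and the compatibility of set-sums with intersections. The only points requiring a little care are: (a) correctly passing between the normalized definition of $\sigma(x,S)$ (seminorm $\le 1$) and its scaled version $(M/2)\sigma(x,S)$ and $2M\sigma(x,S)$; and (b) checking the set-theoretic step $(\bigcap_S A_S) + B \subset \bigcap_S(A_S + B)$ when distributing the first inclusion across the intersection in \eqref{gamma_l}, which is straightforward since the $(M/2)\sigma_\ell(x)$ term is a single set contained in each $(M/2)\sigma(x,S)$.
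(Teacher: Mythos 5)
Your argument is exactly the paper's proof, just written out in more detail: both establish the two single-$S$ inclusions $\Gamma_S(x,f,M/2) + (M/2)\sigma(x,S) \subset \Gamma_S(x,f,M)$ and $\Gamma_S(x,f,M) - \Gamma_S(x,f,M) \subset 2M\sigma(x,S)$ by adding/subtracting the witnessing extensions and using subadditivity of the seminorm, then intersect over all $S$ of cardinality $\le (D+1)^\ell$ via \eqref{gamma_l} and \eqref{sigma_l}. Correct, and same approach.
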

\begin{proof}
By definition we have $\Gamma_S(x,f,M/2) + (M/2) \sigma(x,S) \subset \Gamma_S(x,f,M)$ and $\Gamma_S(x,f,M) - \Gamma_S(x,f,M) \subset 2M \sigma(x,S)$. The conclusion of the lemma then follows from the definition of $\Gamma_\ell$ and $\sigma_\ell$ in \eqref{gamma_l} and \eqref{sigma_l}.
\end{proof}
\begin{rem}
Lemma \ref{gamma-sigma_lem} implies that $P_x + \frac{M}{2} \cdot \sigma_\ell(x) \subset \Gamma_\ell(x,f,M) \subset P_x + 2M \cdot \sigma_\ell(x)$, for any $P_x \in \Gamma_\ell(x,f,M/2)$. Later on we will be concerned with the geometry of the set $\Gamma_\ell(x,f,M)$ at various points $x \in \R^n$. Lemma \ref{gamma-sigma_lem} implies that it is sufficient to understand the geometry of the set $\sigma_\ell(x)$ (which depends on fewer parameters and is therefore more manageable).
\end{rem}

Recall the translation and scaling transformations $T_h$ and  $\tau_{x,\delta}$ on $\cP$. With a slight abuse of notation, we also denote the transformations $T_h$ and $\tau_{x,\delta}$ on $\R^n$ given by
\[
T_h(y) = y+h, \;\;\; \tau_{x,\delta}(y) = x + \delta \cdot (y-x) \qquad (x,y,h \in \R^n, \delta > 0).
\]
Then,
\begin{equation}\label{eq_1503}
\sigma( T_h(y), T_h(S)) = T_h \left\{ \sigma(y,S) \right\}, \mbox{ and } \sigma(\tau_{x,\delta}(y), \tau_{x,\delta}(S)) = \tau_{x,\delta} \left\{ \sigma(y,S) \right\},
\end{equation}
for any $x,y, h \in \R^n$, $\delta > 0$, and $S \subset \R^n$, as may be verified directly. Here in our notation, if $T : \R^n \rightarrow \R^n$ then $T(S) = \{ T(y) : y \in S \}$. 

In the next lemma we establish two important properties of the sets $\Gamma_\ell(x,f,M)$. We show that the finiteness hypothesis $\cF\cH(k^\#)$ (see \eqref{fin_hyp}) implies that $\Gamma_\ell(x,f,M)$ is non-empty if $\ell$ and $k^\#$ are suitably related and if $M \geq 1$. We also show that the mappings $x \mapsto \Gamma_\ell(x,f,M)$ are ``quasicontinuous'' in a sense to be made precise below.
\begin{lemma}\label{gamma_trans_lem}
If $x \in \R^n$, $(D+1)^{\ell+1} \leq k^\#$, and $M \geq 1$, then
\begin{equation}
\label{FH}
\cF\cH(k^\#) \implies \Gamma_\ell(x,f,M) \neq \emptyset.
\end{equation}
If $x,y \in \R^n$, $\ell \geq 1$, $\delta \geq |x-y|$, and $M>0$, then
\begin{equation}
\label{eq_1113}
\Gamma_\ell(x,f,M) \subset \Gamma_{\ell-1}(x,f,M) + C_T M\cdot \cB_{x,\delta}
\end{equation}
and
\begin{equation}
\label{eq_1114}
\sigma_\ell(x) \subset \sigma_{\ell-1}(x) + C_T \cdot \cB_{x,\delta},
\end{equation}
where $C_T$ is the constant in \eqref{taylor_thm}.
\end{lemma}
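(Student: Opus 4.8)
The plan is to derive all three assertions from Helly's theorem applied to \emph{finite} families of convex subsets of the $D$-dimensional space $\cP$. Since $E$ is finite, every family of sets $\Gamma_S(\cdot,\cdot,\cdot)$ indexed by subsets $S\subset E$ is finite, so the purely combinatorial form of Helly's theorem applies with no closedness or boundedness assumptions. The finiteness hypothesis $\cF\cH(k^\#)$ will drive \eqref{FH}, while Taylor's theorem \eqref{taylor_thm} will drive \eqref{eq_1113}; and \eqref{eq_1114} is precisely the special case $f\equiv 0$, $M=1$ of \eqref{eq_1113}, since $\sigma_\ell(z)=\Gamma_\ell(z,0,1)$. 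So it suffices to treat \eqref{FH} and \eqref{eq_1113}. (Throughout I interpret the right-hand sides of \eqref{eq_1113} and \eqref{eq_1114} as based at the nearby point $y$, in accordance with the ``quasicontinuity'' heading and the hypothesis $\delta\ge|x-y|$; when $x=y$ the assertion is the trivial inclusion $\Gamma_\ell(x,f,M)\subset\Gamma_{\ell-1}(x,f,M)$.)

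For \eqref{FH}: by \eqref{gamma_l}, $\Gamma_\ell(x,f,M)$ is the intersection of the convex sets $\Gamma_S(x,f,M)$ over all $S\subset E$ with $\#(S)\le (D+1)^\ell$. By Helly's theorem in the $D$-dimensional space $\cP$, it is enough to exhibit, for any $D+1$ of these sets, a common point. Given $S_1,\dots,S_{D+1}$ with $\#(S_i)\le (D+1)^\ell$, the union $S:=S_1\cup\dots\cup S_{D+1}$ satisfies $\#(S)\le (D+1)\cdot(D+1)^\ell=(D+1)^{\ell+1}\le k^\#$. Hence $\cF\cH(k^\#)$ yields $F\in C^{m-1,1}(\R^n)$ with $F=f$ on $S$ and $\|F\|\le 1\le M$, and since $F=f$ on each $S_i$ we get $J_xF\in\Gamma_{S_i}(x,f,M)$ for every $i$. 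Thus $\Gamma_\ell(x,f,M)\ne\emptyset$.

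For \eqref{eq_1113}: we may assume $\Gamma_\ell(x,f,M)\ne\emptyset$ and fix $P$ in it. Consider the finite family of convex subsets of $\cP$ made up of the sets $\Gamma_S(y,f,M)$ for $S\subset E$ with $\#(S)\le (D+1)^{\ell-1}$, together with one additional set $\Omega:=P+C_TM\cdot\cB_{x,\delta}$. I claim this family has a common point $Q$; granting this, $Q\in\Gamma_{\ell-1}(y,f,M)$ by \eqref{gamma_l}, while $Q\in\Omega$ gives $P-Q\in C_TM\cdot\cB_{x,\delta}$ (using that $\cB_{x,\delta}$ is symmetric), so $P=Q+(P-Q)\in\Gamma_{\ell-1}(y,f,M)+C_TM\cdot\cB_{x,\delta}$, which is \eqref{eq_1113}. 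To prove the claim I apply Helly's theorem again: it suffices that any $D+1$ members of the family meet. Among such $D+1$ members there are at most $D+1$ sets of the form $\Gamma_S(y,f,M)$, say $\Gamma_{S_1}(y,f,M),\dots,\Gamma_{S_j}(y,f,M)$ with $j\le D+1$, so $S:=S_1\cup\dots\cup S_j$ has $\#(S)\le (D+1)(D+1)^{\ell-1}=(D+1)^\ell$. Since $P\in\Gamma_\ell(x,f,M)$, there is $F\in C^{m-1,1}(\R^n)$ with $F=f$ on $S$, $J_xF=P$, and $\|F\|\le M$. Then $J_yF\in\Gamma_{S_i}(y,f,M)$ for every $i$; and by \eqref{taylor_thm} (applied with $G=\R^n$, using $\delta\ge|x-y|$), $J_yF-P=J_yF-J_xF\in C_T\|F\|\cdot\cB_{x,\delta}\subset C_TM\cdot\cB_{x,\delta}$, i.e.\ $J_yF\in\Omega$. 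Hence $J_yF$ belongs to every one of the $D+1$ chosen members, and Helly's theorem supplies the common point $Q$.

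The only genuine obstacle is the combinatorial bookkeeping that makes the two Helly arguments go through: the level thresholds must be chosen so that, for any $D+1$ of the convex sets in play, a single auxiliary set $S\subset E$ — of size at most $(D+1)^{\ell+1}$ for \eqref{FH}, and at most $(D+1)^\ell$ for \eqref{eq_1113} — simultaneously encodes all the constraints coming from those $D+1$ sets, so that $\cF\cH(k^\#)$ (respectively the defining property of $\Gamma_\ell(x,f,M)$) produces one function whose jet is the desired common point. This is exactly why the quasicontinuity statement must drop one level, from $\ell$ to $\ell-1$: a union of $D+1$ sets of size $\le(D+1)^{\ell-1}$ has size $\le(D+1)^\ell$, which is what $\Gamma_\ell(x,f,M)$ can handle. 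Folding the Taylor-theorem error into the picture as the single extra convex set $\Omega$, while staying within Helly's ``$D+1$ sets in a $D$-dimensional space'' budget, is the one mildly delicate point; the rest is routine verification.
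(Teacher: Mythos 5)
Your proof is correct and follows essentially the same route as the paper's. Both arguments derive \eqref{FH} by applying Helly's theorem in the $D$-dimensional space $\cP$ to the finite family $\{\Gamma_S(x,f,M)\}$ and using the union trick to reduce to $\cF\cH(k^\#)$, and both derive \eqref{eq_1113} by another Helly argument at level $\ell-1$ combined with the Taylor estimate \eqref{taylor_thm}. You were also right to flag that the displayed form of \eqref{eq_1113} must really be read with $y$ in place of $x$ on the right (as the hypothesis $\delta\geq|x-y|$ and the later uses of the lemma make clear); the paper's own proof constructs $Q\in\Gamma_{\ell-1}(y,f,M)$.

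The only stylistic difference worth noting: the paper packages the two constraints ($F=f$ on $S$ and $J_xF=P$) into a single auxiliary family $\cK(S):=\{J_yF : F\in C^{m-1,1}(\R^n),\ F=f\text{ on }S,\ \|F\|\leq M,\ J_xF=P\}$, observes via Taylor that every $\cK(S)$ already lies inside $P+C_TM\cdot\cB_{x,\delta}$, and then runs Helly purely on the $\cK(S)$. You instead keep the simpler sets $\Gamma_S(y,f,M)$ and append the shifted ball $\Omega=P+C_TM\cdot\cB_{x,\delta}$ as one extra convex member of the Helly family, verifying membership of $J_yF$ in $\Omega$ directly by Taylor's theorem. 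The two bookkeeping schemes are equivalent: in either case the single function $F$ supplied by $P\in\Gamma_\ell(x,f,M)$ (applied to the union $S$ of the at most $D+1$ selected subsets, of size at most $(D+1)^\ell$) produces a jet $J_yF$ witnessing the required pairwise-Helly nonemptiness. Your observation that, since $E$ is finite, the combinatorial Helly theorem applies without closedness or boundedness hypotheses is also correct and matches the paper's implicit use.
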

\begin{proof}
We first show that the finiteness hypothesis with constant $k^\# \geq (D+1)^{\ell+1}$ implies the intersection of the sets in \eqref{gamma_l} is nonempty for $M=1$. As $\Gamma(x,f,M) \supset \Gamma(x,f,1)$ for $M \geq 1$, the implication \eqref{FH} will then follow. By Helly's theorem and the fact that $\dim \cP = D$, it suffices to show that the intersection of any $(D+1)$-element subcollection is nonempty. Fix $S_1,\cdots,S_{D+1} \subset E$ with $\#(S_i) \leq (D+1)^\ell$. Let  $S := S_1\cup \cdots \cup S_{D+1}$. Note that  $\Gamma_{S_1}(x,f,1) \cap \cdots \cap \Gamma_{S_{D+1}}(x,f,1) \supset \Gamma_{S}(x,f,1)$. Furthermore, $\#(S) \leq (D+1) \cdot (D+1)^{\ell} \leq k^\#$, and so $\Gamma_{S}(x,f,1) \neq \emptyset$ by the finiteness hypothesis $\cF\cH(k^\#)$. This finishes the proof of \eqref{FH}.

To prove \eqref{eq_1113} and \eqref{eq_1114} we reproduce the proof of \cite[Lemma 10.2]{F1}. Note \eqref{eq_1114} is a special case of (\ref{eq_1113}), as $\sigma_\ell(x) = \Gamma_\ell(x, 0, 1)$. So it suffices to prove (\ref{eq_1113}). Given $P \in \Gamma_\ell(x,f, M)$, we will find $Q \in \Gamma_{\ell-1}(y, f,M)$ with
\begin{equation} 
|P - Q|_{x, \delta} \leq C_T M. 
\label{eq_1208} 
\end{equation} 
For a subset $S \subset E$, consider 
\[
\cK(S) := \left \{ J_y F :  F \in C^{m-1,1}(\R^n), \; F = f \mbox{ on } S, \ \| F
\| \leq M, \ J_x F = P \right \}.
\]
Then $\cK(S) \subset \cP$ is convex, and according to \eqref{taylor_thm},
\begin{equation}\label{eq_1209}
\cK(S) \subset P + C_T M \cdot \cB_{x, \delta}.
\end{equation}
Note that $\cK(S) \neq \emptyset$ whenever $\#(S) \leq (D+1)^{\ell}$, due to the fact that $P \in \Gamma_\ell(x, f, M)$. We will show that
\begin{equation}  
\emptyset \neq \bigcap_{\substack{S \subset E \\ \#(S) \leq (D+1)^{\ell-1}}} \cK(S)
\subset \Gamma_{\ell-1}(y, f, M). \label{eq_1138}
\end{equation}
The inclusion on the right-hand side of \eqref{eq_1138} is immediate from the definition of $\Gamma_{\ell-1}(y, f, M)$. All that remains is to show that the intersection of the collection of sets in (\ref{eq_1138}) is non-empty. By Helly's theorem it suffices to show that the intersection of any $(D+1)$-element subcollection is nonempty. Thus, pick $S_1,\ldots,S_{D+1} \subset E$ with $\#(S_i) \leq (D+1)^{\ell-1}$. Then $S = S_1 \cup \ldots \cup S_{D+1}$ is of cardinality at most $(D+1) (D+1)^{\ell-1} = (D+1)^{\ell}$, and thus $\cK(S) \neq \emptyset$. Clearly, $\cK(S) \subset \cK(S_1) \cap \cdots \cap \cK(S_{D+1})$. This finishes the proof of \eqref{eq_1138}. Fix a polynomial $Q$ belonging to the intersection in \eqref{eq_1138}. According to \eqref{eq_1138}, $Q \in \Gamma_{\ell-1}(y, f,M)$. By \eqref{eq_1209}, $Q \in \cK(\emptyset) \subset P + C_T M \cdot \cB_{x,\delta}$, and so $Q - P \in C_T M \cdot \cB_{x,\delta}$, giving \eqref{eq_1208}.
\end{proof}

\begin{lemma}\label{sigma_trans_lem}
If $x,y \in \R^n$, and $\delta \geq |x-y|$, then $\sigma(x) \subset  \sigma(y) +  C_T \cdot \cB_{x,\delta}$.
\end{lemma}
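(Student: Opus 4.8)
The plan is to argue directly from the definition of $\sigma(x) = \sigma(x,E)$, without needing Helly's theorem or the stopping-time machinery, since here we work with the full set $E$ rather than with all small subsets. This lemma is essentially the $\ell = \infty$ analogue of \eqref{eq_1114} in Lemma \ref{gamma_trans_lem}, and the proof is a one-line application of Taylor's theorem.

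First I would fix $P \in \sigma(x)$ and unwind the definition: there exists $\varphi \in C^{m-1,1}(\R^n)$ with $\varphi = 0$ on $E$, $\| \varphi \| \leq 1$, and $J_x \varphi = P$. The natural candidate for the nearby element of $\sigma(y)$ is the jet of the \emph{same} function at $y$: set $Q := J_y \varphi$. Since $\varphi$ vanishes on $E$ and has seminorm at most $1$, we immediately get $Q \in \sigma(y)$ from the definition of $\sigma(y) = \sigma(y,E)$.

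Next I would estimate $P - Q = J_x \varphi - J_y \varphi$. Applying Taylor's theorem in the form \eqref{taylor_thm} with $G = \R^n$, $F = \varphi$, and the given $\delta \geq |x - y|$, we obtain
\[
J_x \varphi - J_y \varphi \in C_T \| \varphi \| \cdot \cB_{x,\delta} \subseteq C_T \cdot \cB_{x,\delta},
\]
using $\| \varphi \| \leq 1$ and the fact that $\cB_{x,\delta}$ is convex and symmetric (so scaling down by $\|\varphi\|\le 1$ keeps us inside $C_T \cB_{x,\delta}$). Therefore $P = Q + (P - Q) \in \sigma(y) + C_T \cdot \cB_{x,\delta}$. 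Since $P \in \sigma(x)$ was arbitrary, this gives the claimed inclusion $\sigma(x) \subset \sigma(y) + C_T \cdot \cB_{x,\delta}$.

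There is no real obstacle here: the only point to be careful about is that the \emph{same} cutoff-type function $\varphi$ witnesses membership in both $\sigma(x)$ and (via its $y$-jet) $\sigma(y)$, which is what makes the bound on $P-Q$ available directly from \eqref{taylor_thm}. If one wanted the sharper constant-tracking, one could instead invoke \eqref{eq_1114} and let $\ell \to \infty$, but the direct argument above is cleaner.
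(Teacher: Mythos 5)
Your proof is correct and is essentially identical to the paper's own argument: fix $P \in \sigma(x)$ with witness $\varphi$, set $Q = J_y\varphi \in \sigma(y)$, and apply Taylor's theorem \eqref{taylor_thm} to bound $P - Q$. No further comments are needed.
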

\begin{proof}
Let $P \in \sigma(x)$. Then there exists $\varphi \in C^{m-1,1}(\R^n)$ with $\varphi = 0$ on $E$, $\| \varphi \| \leq 1$, and $J_x \varphi = P$. Let $Q = J_y \varphi$. Then $Q \in \sigma(y)$, and by \eqref{taylor_thm} we have $P - Q \in C_T \cdot \cB_{x,\delta}$.
\end{proof}

\begin{rem}
By \eqref{eq:change_delta}, $\cB_{x,\delta} \subset \delta \cdot \cB_{x,1}$ for $\delta \leq 1$. Therefore, Lemma \ref{sigma_trans_lem} implies the mapping $x \mapsto \sigma(x)$ is continuous, where the space of subsets of $\cP$ carries the topology induced by the Hausdorff metric with respect to any of the topologically equivalent scaled norms.
\end{rem}

\begin{lemma}\label{lemma2}
There exists a constant $C \geq 1$ determined by $m$ and $n$ so that, for any ball $B \subset \R^n$ and $z \in \frac{1}{2}B$, we have
\[
\sigma(z,E \cap B) \cap  \cB_{z,  \diam(B)} \subset C \cdot \sigma(z,E).
\]
 \end{lemma}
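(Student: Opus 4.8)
The statement says: if $z \in \frac12 B$, then any jet $P = J_z\varphi$ of a function $\varphi$ vanishing on $E \cap B$ with $\|\varphi\|\le 1$, provided $P$ also satisfies the size bound $P \in \cB_{z,\diam B}$, is actually (up to a constant factor) the jet at $z$ of a function vanishing on all of $E$. The obvious obstacle is that $\varphi$ need not vanish off $B$, so we must cut it off — but a naive cutoff destroys the jet at $z$ unless $z$ is well inside $B$, which is exactly why the hypothesis $z \in \frac12 B$ is there.

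So the plan is: let $B = B(x_0, r)$ so $\diam B = 2r$, and pick a cutoff function $\theta \in C^{m-1,1}(\R^n)$ with $\theta \equiv 1$ on $\frac12 B$, $\supp \theta \subset B$, and $\|\theta\| \le C/r^m$ (equivalently $|J_w\theta|_{w,r} \le C$ for all $w$, which is the scale-invariant way to say it, consistent with \eqref{cutoff_jet}). Set $\psi := \theta \cdot \varphi$. Then $\psi$ vanishes on $E$: indeed $\psi$ vanishes on $E\cap B$ because $\varphi$ does there, and $\psi$ vanishes on $E \setminus B$ because $\theta$ does there. Also $J_z\psi = J_z\theta \odot_z J_z\varphi = J_z\theta \odot_z P$, since $\theta \equiv 1$ near $z$ forces $J_z\theta = 1$ — wait, more carefully: $z \in \frac12 B$ and $\theta \equiv 1$ on $\frac12 B$, so $J_z\theta = 1$ and hence $J_z\psi = P$ exactly. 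Good, so the jet is preserved; the only thing to check is the seminorm bound $\|\psi\| \le C$.

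The seminorm estimate is the main work, and it is where the size hypothesis $P \in \cB_{z,\diam B}$ enters. I would estimate $\|\theta\varphi\|_{C^{m-1,1}(\R^n)}$ by splitting: away from $B$ it's zero; on $B$ we write, for $w, w' \in B$, the difference of $(m-1)$st derivatives of $\theta\varphi$ via the jet comparison inequalities. Concretely, use Taylor's theorem \eqref{taylor_thm} to pass from pointwise derivative bounds to jet bounds, and bound $\|\psi\|$ through a standard localization: $\|\psi\| \le C \sup_{w\in B} |J_w\psi|_{w,r} + C\sup_{w,w'\in B,\ |w-w'|\le \delta\le r}|J_w\psi - J_{w'}\psi|_{w,\delta}/1$ — i.e. control the jet of $\psi$ at each point of $B$ at scale $r$, and control how it varies. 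For the pointwise jet bound: $J_w\psi = J_w\theta \odot_w J_w\varphi$, and $|J_w\varphi|_{w,r} = |J_w\varphi - J_z\varphi + J_z\varphi|_{w,r}$; now $|J_w\varphi - J_z\varphi|_{w,r} \le C\|\varphi\| \le C$ by \eqref{taylor_thm} (since $|w-z| \le \diam B = 2r \le Cr$, using \eqref{trans_norm} to move the basepoint from $z$ to $w$ at a fixed scale), and $|J_z\varphi|_{w,r} = |P|_{w,r} \le C|P|_{z,r} \le C$ by \eqref{trans_norm} and the hypothesis $P \in \cB_{z,2r} \subset C\cB_{z,r}$ (using \eqref{eq:change_delta}). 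Combined with $|J_w\theta|_{w,r} \le C$ and Lemma \ref{lem_c1}(ii) (with $\delta = \rho = r$), this gives $|J_w\psi|_{w,r} \le C r^m \cdot C \cdot C = Cr^m \cdot$... — one has to track the $\delta^m$ factors, but they cancel against the $r^{-m}$ in $\|\theta\|$, leaving a clean bound. For the variation of $J_w\psi$ as $w$ ranges over $B$, I would apply Remark \ref{rem1} / Lemma \ref{lem_prod_cont} to the product $\theta\cdot\varphi$: the hypotheses there require both $J_w\theta, J_{w'}\theta \in M_1\cB_{w,\delta}$ and $J_w\theta - J_{w'}\theta \in M_1\cB_{w,\rho}$ (with $M_1 = C$, which holds by \eqref{cutoff_jet} and \eqref{taylor_thm} applied to $\theta$), and similarly for $\varphi$ with $M_2 = C$, where for $\varphi$ the "size at scale $\delta = r$" bound again uses the hypothesis on $P$ and \eqref{trans_norm}, and the "variation at scale $\rho$" bound uses \eqref{taylor_thm} directly since $\|\varphi\|\le 1$.

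**Main obstacle.** The delicate point is bookkeeping the scales: $\varphi$ itself is only controlled by $\|\varphi\|\le 1$ globally, but its \emph{jet} $J_w\varphi$ at a far-away point $w$ could be large at small scales — this is precisely why we only get to conclude $J_z\varphi \in C\sigma(z,E)$ and why the size hypothesis $J_z\varphi \in \cB_{z,\diam B}$ is indispensable: it says the jet is not too large at the scale $\diam B$, which is the scale at which $\theta$ lives, so the product stays bounded. I expect the only real care needed is to verify that in every invocation of Lemma \ref{lem_c1}, Lemma \ref{lem_prod_cont}, and Remark \ref{rem1} the hypotheses "$\cdot \in M\cB_{w,\delta}$" and "difference $\in M\cB_{w,\rho}$" are met with $M = C$ for both $\theta$ and $\varphi$ at the scale $\delta = \diam B$ (for $\theta$: \eqref{cutoff_jet} plus \eqref{taylor_thm}; for $\varphi$: the size hypothesis on $P$ moved by \eqref{trans_norm}/\eqref{eq:change_delta} plus \eqref{taylor_thm}), after which the desired inequality $|J_w\psi - J_{w'}\psi|_{w,\rho} \le C$ for $|w-w'| \le \rho \le r$, together with $\psi$ vanishing on $E$, yields $\psi/C \in$ the set defining $\sigma(z,E)$ via Whitney's theorem \eqref{whit_cond}, i.e. $P = J_z\psi \in C\sigma(z,E)$.
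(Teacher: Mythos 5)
Your proposal follows essentially the same route as the paper: multiply $\varphi$ by a cutoff $\theta$ supported in $B$ with $\theta\equiv 1$ on $\tfrac12 B$, check that $\theta\varphi$ vanishes on $E$ and has the same $(m-1)$-jet at $z$, and bound $\|\theta\varphi\|_{C^{m-1,1}(B)}$ by verifying the hypotheses of Lemma~\ref{lem_prod_cont} (in the two-factor form of Remark~\ref{rem1}) using \eqref{cutoff_jet}, Taylor's theorem \eqref{taylor_thm}, \eqref{trans_norm}, \eqref{eq:change_delta}, and crucially the size hypothesis $P\in\cB_{z,\diam B}$. The only superficial difference is that you separate a pointwise jet bound via Lemma~\ref{lem_c1}(ii) before the variational bound, which the paper folds into a single application of Lemma~\ref{lem_prod_cont}; this is not a gap.
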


\begin{proof}
Choose a cutoff function $\theta \in C^{m-1,1}(\R^n)$ which is supported on $B$, equal to $1$ on $(\frac{1}{2})B$, and satisfies $\| \theta \| \leq C \cdot \delta^{-m}$. Fix $z \in (\frac{1}{2}) B$ and a polynomial $P \in \sigma(z, E \cap B) \cap  \cB_{z,\delta}$. Since $P \in \sigma(z,E \cap B)$ there exists $\varphi \in C^{m-1,1}(\R^n)$ with $\varphi = 0$ on $E \cap B$, $\| \varphi \| \leq 1$, and $J_{z} (\varphi) = P$. Define $\widetilde{\varphi} = \varphi \theta$. This function clearly vanishes on all of $E$. Since $z$ belongs to the ball $(\frac{1}{2})B$ on which $\theta$ is identically $1$, we have $J_{z} ( \widetilde{\varphi}) = J_{z} ( \varphi )= P$. To prove $P \in C  \sigma(z,E)$, all that remains is to establish the seminorm bound $\| \widetilde{\varphi} \| \leq C$. As $\widetilde{\varphi}$ vanishes on $\R^n \setminus B$, it suffices to prove $\| \widetilde{\varphi} \|_{C^{m-1,1}(B)} \leq C$. To do so, we will prove that
\begin{equation}
\label{eq_med}
\begin{aligned}
|J_x(\widetilde{\varphi}) - J_y(\widetilde{\varphi})|_{x,\rho} = | J_x (\varphi) \odot_x J_x (\theta) - & J_y (\varphi) \odot_y J_y (\theta) |_{x,\rho} \leq C \\
& \mbox{for } x,y \in B, \; \rho =  |x-y|.
\end{aligned}
\end{equation}
To prove this estimate we will apply Lemma \ref{lem_prod_cont}. According to \eqref{cutoff_jet},  $J_x(\theta) \in C \delta^{-m} \cB_{x,\delta}$. On the other hand, by \eqref{trans_norm} and the fact $|x-y| \leq \delta$, also $J_y(\theta) \in C \delta^{-m} \cB_{y,\delta} \subset C' \delta^{-m} \cB_{x,\delta}$.  By Taylor's theorem (in the form \eqref{taylor_thm}), $J_x(\theta) - J_y(\theta) \in C \| \theta\| \cB_{x, \rho} \subset C \delta^{-m} \cB_{x,\rho}$.

Note that $|x-z| \leq \delta$, since $x \in B$ and $z \in (\frac{1}{2}) B$. Thus, by Taylor's theorem (see \eqref{taylor_thm}) and  \eqref{trans_norm}, $J_x(\varphi) = (J_x(\varphi) - J_z(\varphi)) + P \in C_T \cB_{x,\delta} + \cB_{z,\delta} \subset C_T \cB_{x,\delta} +   C \cB_{x,\delta} \subset C \cB_{x,\delta}$. On the other hand, by Taylor's theorem, $J_x(\varphi) - J_y(\varphi) \in C_T \cB_{x,\rho}$. We are therefore in a position to apply Lemma \ref{lem_prod_cont} (see Remark \ref{rem1}), with $Q_x$, $Q_y$, $R_x$, and $R_y$ picked to be the jets at $x$ and $y$ of $\varphi$ and $\theta$, respectively. This finishes the proof of \eqref{eq_med}.
\end{proof}

\subsection{Whitney convexity}

Let $E \subset \R^n$ be a finite set. Recall the definition of the sets $\sigma(x) = \sigma(x,E)$ and $\sigma_\ell(x)$, $\ell \geq 0$ (see \eqref{sigma_l}). We now describe an additional important property of the sets $\sigma(x)$ (resp. $\sigma_\ell(x)$) beyond convexity. 

\begin{definition}[Whitney convexity] \label{wc_def} Given a symmetric convex set $\Omega$ in $\cP$, and $x \in \R^n$, the Whitney coefficient of $\Omega$ at $x$ is the infimum over all $R > 0$ such that $(\Omega \cap \cB_{x,\delta}) \odot_x  \cB_{x,\delta} \subset R \delta^m \Omega$ for all $\delta > 0$. Denote the Whitney coefficient of $\Omega$ at $x$ by $w_x(\Omega)$. If no finite $R$ exists, then $w_x(\Omega) = + \infty$. If $w_x(\Omega) < +\infty$ then we say that $\Omega$ is Whitney convex at $x$.
\end{definition}
The term ``Whitney convexity'' was coined by Fefferman \cite{F3}. It is a quantitative analogue of the concept of an ideal. Roughly speaking, if the Whitney coefficient $w_x(\Omega)$ is small then $\Omega$ is ``close'' to an ideal. For example, any ideal $I$ in $\cP_x$ is Whitney convex at $x$ with $w_x(I) = 0$; furthermore, the vanishing of the Whitney coefficient for subspaces provides an equivalent characterization of $\odot_x$-ideals.

We note a few basic properties of Whitney coefficients: For $x \in \R^n$, a symmetric convex set $\Omega \subset \cP$ and $r \geq 1$, it holds that $w_x(r \Omega ) \leq w_x(\Omega)$. If $\Omega_1,\Omega_2 \subset \cP$ are symmetric convex sets then $w_x(\Omega_1 \cap \Omega_2) \leq \max \{ w_x(\Omega_1), w_x( \Omega_2) \}$. Finally, it follows  from \eqref{eq_2115} that $w_x(\Omega) = w_x(\tau_{x,\delta}(\Omega))$ and $w_x(\Omega) = w_{x+h}( T_h \Omega)$ for $\delta > 0$, $h \in \R^n$.

\begin{lemma}\label{sigma_wc_lem}
For any $z \in \R^n$, the sets $\sigma_\ell(z)$ and $\sigma(z)$ are Whitney convex at $z$ with Whitney coefficient at most $C_0$, for a universal constant $C_0=C_0(m,n)$.
\end{lemma}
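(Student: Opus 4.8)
The plan is to prove this by a direct estimate showing that for every $\varphi$ with $\varphi=0$ on a subset $S\subset E$ and $\|\varphi\|\le 1$, and every $\psi$ with $|J_z\psi|_{z,\delta}\le 1$, the product $\varphi\cdot\psi$ (suitably truncated) is an admissible competitor for the Whitney convexity inclusion. Concretely, fix $z$, fix $\delta>0$, and let $P\in\sigma_\ell(z)\cap\cB_{z,\delta}$ and $Q_0\in\cB_{z,\delta}$; I want to show $P\odot_z Q_0\in C_0\delta^m\sigma_\ell(z)$. Since $\sigma_\ell(z)=\bigcap_{\#(S)\le(D+1)^\ell}\sigma(z,S)$, and since $w_z(\Omega_1\cap\Omega_2)\le\max\{w_z(\Omega_1),w_z(\Omega_2)\}$ was already noted in the excerpt, it suffices to bound $w_z(\sigma(z,S))$ uniformly in $S$ — i.e. to show $(\sigma(z,S)\cap\cB_{z,\delta})\odot_z\cB_{z,\delta}\subset C_0\delta^m\sigma(z,S)$ for all $S\subset E$ and all $\delta>0$. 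The same argument then applies to $\sigma(z)=\sigma(z,E)$.

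First I would reduce to the case $z=0$, $\delta=1$ using the scaling and translation identities in \eqref{eq_1503} and \eqref{eq_2115}, together with the fact that $w_z(\Omega)$ is invariant under $\tau_{z,\delta}$ and $T_h$. After this reduction, I take $P\in\sigma(0,S)\cap\cB_{0,1}$, so there is $\varphi\in C^{m-1,1}(\R^n)$ with $\varphi=0$ on $S$, $\|\varphi\|\le 1$, $J_0\varphi=P$; and I take $Q\in\cB_{0,1}$, which I realize as $Q=J_0\psi$ for a polynomial $\psi$ of degree $\le m-1$ with $|\psi|_{0,1}\lesssim 1$ (a polynomial is its own jet). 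The natural candidate is $\widetilde\varphi:=\varphi\cdot\theta\cdot\psi$ where $\theta$ is a cutoff supported on the unit ball $B=\{|x|\le 1\}$, equal to $1$ on $\tfrac12 B$, with $\|\theta\|\le C$; this guarantees $\widetilde\varphi$ is compactly supported (so its global seminorm equals its seminorm on $B$), vanishes on $S$ (because $\varphi$ does), and $J_0\widetilde\varphi=J_0\varphi\odot_0 J_0\theta\odot_0 J_0\psi=P\odot_0 Q$ since $\theta\equiv 1$ near $0$ — up to the harmless observation that $\psi$ itself is a polynomial so $P\odot_0(\text{jet of }\theta\psi)=P\odot_0 Q$ near $0$. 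Then I need $\|\widetilde\varphi\|_{C^{m-1,1}(B)}\le C_0$, which follows from Lemma \ref{lem_prod_cont}: the jets $J_x\varphi$, $J_x\theta$, $J_x\psi$ all lie in $C\cB_{x,1}$ for $x\in B$ (for $\varphi$ using Taylor's theorem \eqref{taylor_thm} plus \eqref{trans_norm} to transport $J_0\varphi=P\in\cB_{0,1}$ to $x$; for $\theta$ using \eqref{cutoff_jet} and \eqref{trans_norm}; for $\psi$ using \eqref{eq:change_delta} and \eqref{trans_norm} since $\psi$ is a fixed polynomial of bounded norm), and the differences $J_x\varphi-J_y\varphi$, $J_x\theta-J_y\theta$, $J_x\psi-J_y\psi$ all lie in $C\cB_{x,\rho}$ with $\rho=|x-y|$ by Taylor's theorem — noting $J_x\psi-J_y\psi\in C\cB_{x,\rho}$ because $\psi\in C^{m-1,1}(B)$ with bounded seminorm. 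Applying Lemma \ref{lem_prod_cont} with the roles of $(P_x,Q_x,R_x)$ played by the jets of $\varphi,\theta,\psi$ gives $|J_x\widetilde\varphi-J_y\widetilde\varphi|_{x,\rho}\le C_0$ for all $x,y\in B$, hence $\|\widetilde\varphi\|\le C_0$, hence $P\odot_0 Q\in C_0\,\sigma(0,S)$, which is the desired inclusion with $\delta=1$.

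Undoing the rescaling yields $(\sigma(z,S)\cap\cB_{z,\delta})\odot_z\cB_{z,\delta}\subset C_0\delta^m\sigma(z,S)$ for all $\delta>0$, i.e. $w_z(\sigma(z,S))\le C_0$ uniformly in $S$ and $z$; intersecting over $S$ with $\#(S)\le(D+1)^\ell$ gives $w_z(\sigma_\ell(z))\le C_0$, and taking $S=E$ (or the intersection over all $S\subset E$) gives $w_z(\sigma(z))\le C_0$. I expect the main obstacle to be purely bookkeeping: verifying cleanly that $J_0\widetilde\varphi=P\odot_0 Q$ despite $\widetilde\varphi$ being a triple product with two cutoff/polynomial factors (this is where $\theta\equiv 1$ on $\tfrac12 B$ is used, exactly as in the proof of Lemma \ref{lemma2}), and carefully tracking that all hypotheses of Lemma \ref{lem_prod_cont} — both the $\cB_{x,\delta}$ bounds and the $\cB_{x,\rho}$ difference bounds — hold with constants depending only on $m,n$. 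There is no conceptual difficulty beyond the machinery already assembled; in fact the proof is essentially the computation in Lemma \ref{lemma2} with one extra polynomial factor $\psi$ inserted, and Lemma \ref{lem_prod_cont} was stated precisely to accommodate three varying polynomial factors.
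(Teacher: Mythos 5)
Your proposal is correct and follows essentially the same route as the paper's own proof: reduce to bounding $w_z(\sigma(z,S))$ uniformly in $S$, take the competitor $\varphi\cdot\theta\cdot\widetilde P$ (with $\theta$ a cutoff equal to $1$ near the base point, so that the jet at the base point is $P\odot_z\widetilde P$), and then invoke Lemma \ref{lem_prod_cont} to control the $C^{m-1,1}$ seminorm of the triple product. The only cosmetic difference is that you rescale to $\delta=1$ at the outset rather than carrying $\delta$ through, and you treat $\widetilde P=Q$ as a (trivially constant) varying jet $J_x\psi$ where the paper simply sets $Q_x=Q_y=\widetilde P$; neither choice changes the substance of the argument.
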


\begin{proof}
Due to the representation \eqref{sigma_l} and the basic properties of Whitney coefficients stated above, we have $w_x( \sigma_\ell(z) ) \leq \max \{ w_z(\sigma(z,S)) : S \subset E, \; \#(S) \leq (D+1)^\ell\}$. Hence, it suffices to establish the inequality $w_z(\sigma(z,S)) \leq C$ for any subset $S \subset E$ and $z \in \R^n$, where $C$ is a constant determined by $m$ and $n$. Fix $\delta > 0$, and fix arbitrary polynomials $P \in \sigma(z,S) \cap \cB_{z, \delta}$ and $\widetilde{P} \in \cB_{z, \delta}$. We claim that \begin{equation}
 P \odot_z \widetilde{P} \in C \delta^{m} \sigma(z,S).
\label{eq_1448}
\end{equation}
Note that \eqref{eq_1448} implies the inequality $w_z(\sigma(z,S)) \leq C$. Thus, it is sufficient to establish \eqref{eq_1448}.

Since $P \in \sigma(z,S)$, there exists $\varphi \in C^{m-1,1}(\R^n)$ with $\varphi = 0$ on $S$, $J_z (\varphi)  = P$, and $\| \varphi  \| \leq 1$. Let $\theta: \R^n \rightarrow \R$ be a $C^{\infty}$-function, with support contained in the ball $B_\delta(z) := \{ y \in \R^n : |y-z| \leq \frac{\delta}{2}\}$, with $\theta \equiv 1$ in a neighborhood of $z$, and with $\| \theta \| \leq C \delta^{-m}$ for a constant $C$ determined by $m$ and $n$. \footnote{We may obtain such a $\theta$ by rescaling a cutoff function supported on the ball $B_1(z) := \{ y \in \R^n : |y-z| \leq \frac{1}{2}\}$.}

Since $J_z (\theta) = 1$ and $J_z (\varphi) = P$, we have $J_z(\theta  \widetilde{P} \varphi) = 1 \odot_z \widetilde{P} \odot_z P = \widetilde{P} \odot_z P$. To establish \eqref{eq_1448}, it therefore suffices to show that 
\begin{equation}
\label{eq_1449}
J_z(\theta \widetilde{P} \varphi ) \in C \delta^m \sigma(z, S).
\end{equation}
Because $\theta \widetilde{P} \varphi $ vanishes on $S$ (as does $\varphi $), \eqref{eq_1449} is implied by the bound $\| \theta \widetilde{P} \varphi \| \leq C \delta^m$. Because $\theta \widetilde{P} \varphi $ vanishes on $\R^n \setminus B$, it suffices to establish  $\| \theta \widetilde{P} \varphi \|_{C^{m-1,1}(B)} \leq C \delta^m$. To that end, we need to show that
\begin{equation}
\label{eq_1836}
\begin{aligned}
| J_x(\theta) \odot_x \widetilde{P} \odot_x J_x(\varphi) - J_y(\theta) & \odot_y  \widetilde{P} \odot_y J_y(\varphi) |_{x, \rho} \leq C \delta^m, \\
&  \mbox{for} \;  x,y \in B, \; \rho = |x-y|. 
\end{aligned}
\end{equation}
We prepare to apply Lemma \ref{lem_prod_cont} to prove this estimate.

Following the proof of Lemma \ref{lemma2} (using that $J_z(\varphi) = P \in \cB_{z,\delta}$ and  $\diam( \{ x,y,z \}) \leq \delta = \diam(B)$), and by \eqref{cutoff_jet}, the jets $J_x(\varphi)$, $J_y(\varphi)$ belong to $C \cB_{x,\delta}$; and $J_x(\theta)$, $J_y(\theta)$ belong to $ C \delta^{-m} \cB_{x,\delta}$. Furthermore, $\widetilde{P} \in \cB_{z, \delta}$, and hence by \eqref{trans_norm}, $\widetilde{P} \in C \cB_{x,\delta}$. Finally, by Taylor's theorem (rendered as \eqref{taylor_thm}), $J_x(\varphi) - J_y(\varphi) \in C \cB_{x,\rho}$ and $J_x(\theta) - J_y(\theta) \in C \delta^{-m} \cB_{x,\rho}$.

We are in a position to apply Lemma \ref{lem_prod_cont}, with $P_x$, $P_y$, $R_x$, and $R_y$ picked to be the jets at $x$ and $y$ of $\varphi$ and $\theta$, respectively, and with $Q_x = Q_y = \widetilde{P}$. This finishes the proof of the estimate \eqref{eq_1836}, and with it the proof of the lemma.
\end{proof}

\begin{lemma} \label{span-ideal_lem} If $\Omega$ is Whitney convex at $x$, then $\spn(\Omega)$ is an $\odot_x$-ideal in $\cP_x$.
\end{lemma}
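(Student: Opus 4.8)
The plan is to verify directly that the linear subspace $V := \spn(\Omega)$ absorbs $\odot_x$-multiplication by arbitrary elements of $\cP$. Since $\odot_x$ is bilinear and every element of $V$ is a finite combination $\sum_i c_i P_i$ with $P_i \in \Omega$, the identity $\big(\sum_i c_i P_i\big) \odot_x Q = \sum_i c_i\,(P_i \odot_x Q)$ shows it is enough to prove $P \odot_x Q \in V$ for each $P \in \Omega$ and each $Q \in \cP$. So I would fix such $P$ and $Q$ and work with them.

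Next I would exploit the hypothesis that $\Omega$ is Whitney convex at $x$: by Definition \ref{wc_def} (and its last sentence, which is exactly the point at which $w_x(\Omega) < +\infty$ is used) there is a finite $R > 0$, say $R = w_x(\Omega) + 1$, with $(\Omega \cap \cB_{x,\delta}) \odot_x \cB_{x,\delta} \subset R\,\delta^m\,\Omega$ for all $\delta > 0$. The key observation is that a fixed polynomial lies in $\cB_{x,\delta}$ as soon as $\delta$ is large: applying \eqref{eq:change_delta} with $\rho = 1$ gives $|P|_{x,\delta} \le \delta^{-1}|P|_{x,1}$ and $|Q|_{x,\delta} \le \delta^{-1}|Q|_{x,1}$ for $\delta \ge 1$, so for all sufficiently large $\delta$ we have $P \in \Omega \cap \cB_{x,\delta}$ and $Q \in \cB_{x,\delta}$. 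Hence $P \odot_x Q \in R\,\delta^m\,\Omega$, i.e. $(R\delta^m)^{-1}(P \odot_x Q) \in \Omega \subset V$; since $V$ is a linear subspace it is closed under scalar multiplication, so $P \odot_x Q \in V$, as desired. (If $\Omega = \emptyset$ the statement is vacuous, since then $V = \{0\}$, which is an $\odot_x$-ideal.)

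I do not anticipate a genuine obstacle in this argument; the only things to be careful about are (i) that the rescaling must be done in the direction $\delta \to \infty$, so that the balls $\cB_{x,\delta}$ \emph{expand} and eventually contain the given polynomials $P$ and $Q$ — this is forced by the inequalities in \eqref{eq:change_delta}, reflecting the fact that $|\alpha| - m < 0$ for $|\alpha| \le m - 1$ — and (ii) the final bookkeeping step, where one discards the factor $R\delta^m$ by invoking closedness of $\spn(\Omega)$ under scalar multiplication rather than any property of $\Omega$ itself.
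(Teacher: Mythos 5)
Your proposal is correct and is essentially the same argument as the paper's: both exploit the Whitney-convexity inclusion $(\Omega \cap \cB_{x,\delta}) \odot_x \cB_{x,\delta} \subset R\delta^m\Omega$ together with the fact that for $\delta$ large enough the balls $\cB_{x,\delta}$ swallow any fixed polynomial, so that products of elements of $\Omega$ with arbitrary elements of $\cP$ land in a scalar dilate of $\Omega$ and hence in $\spn(\Omega)$. The paper phrases this as a union over all $\delta > 0$ rather than fixing one large $\delta$, but that is only cosmetic.
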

\begin{proof}
Choose any $R \in(w_x(\Omega), \infty)$. Then $(\Omega \cap \cB_{x,\delta}) \odot_x  \cB_{x,\delta} \subset R \delta^m  \Omega$ for all $\delta > 0$, and so
\[
\Omega \odot_x \cP_x  = \bigcup_{\delta > 0} (\Omega \cap \cB_{x,\delta}) \odot_x  \cB_{x,\delta} \subset   \bigcup_{\delta > 0}  R \delta^m \Omega = \spn(\Omega).
\]
Thus, $\spn(\Omega)  \odot_x \cP_x = \bigcup_{r >0} r \cdot \Omega \odot_x \cP_x \subset  \spn(\Omega)$, and hence $\spn(\Omega)$ is an $\odot_x$-ideal.
\end{proof}

\subsection{Covering lemmas}

This section contains the covering lemmas that will be used later in the paper. Given a ball $B \subset \R^n$ and $\lambda > 0$, let $\lambda B$ denote the ball with identical center as $B$ and radius equal to $\lambda$ times the radius of $B$.

\subsubsection{Whitney covers}

\begin{definition}\label{Whit_cov_def}
A finite collection $\cW$ of closed balls is a \emph{Whitney cover} of a ball $\widehat{B} \subset \R^n$  if (a) $\cW$ is a cover of $\widehat{B}$, (b) the collection of third-dilates $\{ \frac{1}{3} B : B \in \cW \}$ is pairwise disjoint,  and (c) $ \diam(B_1)/\diam(B_2) \in [1/8,8]$ for all balls $B_1, B_2 \in \cW$ with $\frac{6}{5}B_1 \cap \frac{6}{5}B_2 \neq \emptyset$.
\end{definition}

\begin{lemma}[Bounded overlap] \label{gg_lem}
If  $\cW$ is Whitney cover of $\widehat{B}$ then $\#\{ B \in \cW : x \in \frac{6}{5} B\} \leq 100^n$ for all $x \in \R^n$.
\end{lemma}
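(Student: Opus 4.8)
The plan is to use the pairwise-disjointness of the third-dilates together with a volume-packing argument. Fix $x \in \R^n$ and let $\cW_x := \{ B \in \cW : x \in \frac{6}{5} B \}$. The goal is to bound $\#\cW_x$ by $100^n$.

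First I would control the sizes of the balls in $\cW_x$. By property (c) of Definition \ref{Whit_cov_def}, any two balls $B_1, B_2 \in \cW_x$ satisfy $\frac{6}{5}B_1 \cap \frac{6}{5}B_2 \neq \emptyset$ (they both contain $x$), hence $\diam(B_1)/\diam(B_2) \in [1/8, 8]$. Fix a ball $B_0 \in \cW_x$ of minimal diameter, say of radius $r_0$; then every $B \in \cW_x$ has radius in $[r_0, 8 r_0]$. Next I would show that all the balls $B \in \cW_x$ — and in fact their third-dilates $\frac13 B$ — are contained in a single ball centered at $x$. Indeed, if $B \in \cW_x$ has center $c_B$ and radius $r_B \leq 8 r_0$, then $x \in \frac65 B$ means $|x - c_B| \leq \frac65 r_B$, so $\frac13 B$ is contained in the ball $B^* := B(x, \frac65 r_B + \frac13 r_B) = B(x, \frac{23}{15} r_B) \subset B(x, \frac{23}{15} \cdot 8 r_0)$. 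On the other hand, each $\frac13 B$ has radius $\frac13 r_B \geq \frac13 r_0$. Since the sets $\{ \frac13 B : B \in \cW \}$ are pairwise disjoint by property (b), the sets $\{ \frac13 B : B \in \cW_x \}$ are pairwise disjoint subsets of $B^*$, and comparing Lebesgue measures gives
\[
\#\cW_x \cdot \omega_n \left( \tfrac13 r_0 \right)^n \leq \sum_{B \in \cW_x} \omega_n \left( \tfrac13 r_B \right)^n \leq \omega_n \left( \tfrac{23}{15} \cdot 8 r_0 \right)^n,
\]
where $\omega_n$ is the volume of the unit ball. This yields $\#\cW_x \leq \left( \frac{23}{15} \cdot 8 \cdot 3 \right)^n = (36.8)^n \leq 100^n$, as desired.

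The only real point to be careful about is the application of property (c): it requires $\frac65 B_1 \cap \frac65 B_2 \neq \emptyset$, which is exactly guaranteed here because both enlarged balls contain the common point $x$. Everything else is routine volume bookkeeping; one should just double-check that the numerical constant one obtains from the packing inequality indeed falls below $100^n$ (it comfortably does, since $36.8 < 100$). I do not anticipate any genuine obstacle — this is a standard Whitney-decomposition bounded-overlap estimate, and the slightly generous constant $100^n$ in the statement leaves ample room.
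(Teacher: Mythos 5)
Your proof is correct and uses essentially the same packing argument as the paper: fix an extremal ball in $\cW_x$, use condition (c) to pin down the radii within a factor of $8$, nest all the disjoint third-dilates inside one controlled ball, and compare volumes. The only cosmetic differences are that you normalize by the smallest ball and center the container at $x$ (rather than the paper's choice of the largest ball centered at its own center), which actually yields the slightly sharper constant $(36.8)^n$ in place of the paper's $(65.6)^n$; both comfortably fall under $100^n$.
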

\begin{proof}
Let $x \in \R^n$. We may assume $\cW_x := \{ B \in \cW : x \in \frac{6}{5}B\}$ is nonempty, and fix $B_0 \in \cW_x$ of maximal radius. By rescaling, we may assume $\diam(B_0) = 1$. If $B \in \cW_x$ then $\frac{6}{5}B \cap \frac{6}{5} B_0 \neq \emptyset$, and so condition (c) of Definition \ref{Whit_cov_def} implies that  $\diam(B) \in [\frac{1}{8},1]$; thus, by the triangle inequality, $\frac{1}{3} B \subset  (\frac{12}{5} + \frac{1}{3} ) B_0 = \frac{41}{15} B_0$ for all $B \in \cW_x$. Since the collection $\{\frac{1}{3} B \}_{B \in \cW}$ is pairwise disjoint, a volume comparison shows that $\# \cW_x \leq (24 \cdot \frac{41}{15})^n \leq 100^n$.
\end{proof}

\subsubsection{Partitions of unity}

\begin{lemma}[Existence of partitions of unity]\label{pou_lem1}
If $\cW$ is a Whitney cover of $\widehat{B} \subset \R^n$,  then there exist non-negative $C^\infty$ functions $\theta_B : \widehat{B} \rightarrow [0,\infty)$ ($B \in \cW$) such that
\begin{enumerate}
\item $\theta_B = 0$ on $\widehat{B} \setminus \frac{6}{5} B$.
\item $| \partial^\alpha \theta_B (x)| \leq C \diam(B)^{-|\alpha|}$ for all $|\alpha| \leq m$ and $x \in \widehat{B}$.
\item $\sum_{B \in \cW} \theta_B = 1$ on $ \widehat{B}$.
\end{enumerate}
Here, $C$ is a constant determined by $m$ and $n$.
\end{lemma}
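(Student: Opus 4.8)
The plan is the standard construction: transplant a single fixed bump to each ball of the cover, sum, and normalize by the sum. \textbf{Step 1 (a model bump).} Fix once and for all $\phi \in C^\infty(\R^n)$ with $0 \le \phi \le 1$, $\phi \equiv 1$ on $\{|y|\le 1\}$, and $\supp \phi \subset \{|y| \le \frac{6}{5}\}$; such $\phi$ exists by mollifying the indicator of an intermediate ball, and $|\partial^\alpha \phi| \le A$ on $\R^n$ for $|\alpha| \le m$, with $A=A(m,n)$. For $B \in \cW$ with center $c_B$ and radius $r_B$, set $\psi_B(y) := \phi\big((y-c_B)/r_B\big)$. Then $\psi_B \equiv 1$ on $B$, $\supp \psi_B \subset \frac{6}{5}B$, and the chain rule gives $|\partial^\alpha \psi_B| \le A\, r_B^{-|\alpha|} \le C\,\diam(B)^{-|\alpha|}$ for $|\alpha| \le m$. \textbf{Step 2 (normalizing sum and properties (1), (3)).} Put $\Psi := \sum_{B\in\cW}\psi_B$, a finite sum of non-negative $C^\infty$ functions on $\R^n$. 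Since $\cW$ covers $\widehat B$ and $\psi_{B'}\equiv 1$ on $B'$, every $x \in \widehat B$ lies in some $B'\in\cW$, so $\Psi(x)\ge 1$; and by the bounded-overlap bound (Lemma \ref{gg_lem}) at most $100^n$ of the $\psi_B$ are nonzero at any point, so $\Psi \le 100^n$ everywhere. By compactness of $\widehat B$ and continuity of $\Psi$ there is an open $U \supset \widehat B$ with $\Psi \ge \frac{1}{2}$ on $U$, so $\theta_B := \psi_B/\Psi$ is a well-defined non-negative $C^\infty$ function on $U$, which we restrict to $\widehat B$. Property (1) holds because $\theta_B$ vanishes wherever $\psi_B$ does, and (3) because $\sum_B \theta_B = \Psi/\Psi = 1$ on $\widehat B$.

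\textbf{Step 3 (derivative bounds, the crux).} Fix $B \in \cW$, $x\in\widehat B$, and $|\alpha|\le m$. If $x \notin \frac{6}{5}B$ then $\psi_B$, hence $\theta_B$, vanishes on a neighborhood of $x$ and there is nothing to prove. Otherwise set $\cW_x := \{B'\in\cW : x \in \frac{6}{5}B'\}$; it contains $B$ and satisfies $\#\cW_x \le 100^n$ by Lemma \ref{gg_lem}. For each $B'\in\cW_x$ the balls $\frac{6}{5}B'$ and $\frac{6}{5}B$ both contain $x$, so condition (c) of Definition \ref{Whit_cov_def} gives $\diam(B') \ge \frac{1}{8}\diam(B)$, i.e.\ $\diam(B')^{-1} \le 8\,\diam(B)^{-1}$. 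On a small neighborhood of $x$ only the terms $B'\in\cW_x$ contribute to $\Psi$, so Step 1 together with $\#\cW_x \le 100^n$ and this diameter comparison yields $|\partial^\beta \Psi(y)| \le C\,\diam(B)^{-|\beta|}$ near $x$ for $|\beta| \le m$. The reciprocal rule (Fa\`a di Bruno applied to $t\mapsto 1/t$) writes $\partial^\gamma(1/\Psi)(x)$ as a universal polynomial in the $\partial^\beta\Psi(x)$ with $|\beta|\le|\gamma|$, divided by a power $\Psi(x)^{k}$ with $k \le |\gamma|+1$; since $\Psi(x)\ge 1$ this gives $|\partial^\gamma(1/\Psi)(x)| \le C\,\diam(B)^{-|\gamma|}$ for $|\gamma|\le m$. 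Finally, Leibniz applied to $\theta_B = \psi_B\cdot(1/\Psi)$, with the Step 1 bounds, gives $|\partial^\alpha\theta_B(x)| \le C\,\diam(B)^{-|\alpha|}$, which is (2).

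The only delicate point is Step 3: controlling the derivatives of $1/\Psi$. This requires two inputs pulling in the same direction --- the lower bound $\Psi \ge 1$ on $\widehat B$ (from the covering property) to tame the negative powers of $\Psi$, and the comparability of diameters of cover balls meeting a fixed point (condition (c)), which together with bounded overlap ensures the derivatives of $\Psi$ near $x$ all scale like the \emph{single} power $\diam(B)^{-|\beta|}$ rather than degrading to the smallest diameter present. Everything else is routine rescaling of a fixed bump function and bookkeeping with the Leibniz rule.
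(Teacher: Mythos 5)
Your proof is correct and follows essentially the same route as the paper's: rescaled bumps $\psi_B$, normalization by $\Psi = \sum \psi_B$ with $\Psi \geq 1$ on $\widehat{B}$, and the derivative bound for $\theta_B = \psi_B/\Psi$ obtained from bounded overlap (Lemma \ref{gg_lem}), the diameter comparison in condition (c) of Definition \ref{Whit_cov_def}, and the quotient/Leibniz rule. Your invocation of Fa\`a di Bruno for $1/\Psi$ is simply a more explicit version of the paper's ``repeated application of the quotient rule.''
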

\begin{proof}
For $B \in \cW$, let $\psi_B : \R^n \rightarrow \R$ be a $C^\infty$ cutoff function supported on $\frac{6}{5} B$, with $ \psi_B \equiv 1$ on $B$, and with $|\partial^\alpha \psi_B(x)| \leq C \diam(B)^{-|\alpha|}$ for all $x \in \R^n$, $|\alpha| \leq m$. Set $\Psi = \sum_{B \in \cW} \psi_B$ and define
\[
\theta_B(x) := \psi_B(x)/\Psi(x), \quad x \in \widehat{B}.
\]
By definition of a cover, each point in $\widehat{B}$ belongs to some $B \in \cW$; thus, $\Psi \geq 1$ on $\widehat{B}$. Thus $\theta_B \in C^\infty(\widehat{B})$ is well-defined. Property 1 follows because $\psi_B$ is supported on $\frac{6}{5} B$. Furthermore,  $\sum_B \theta_B = \sum_B \psi_B /\Psi = 1$ on $ \widehat{B}$, yielding property 3.

Property 2 is trivial for  $x \in \widehat{B} \setminus \frac{6}{5} B$, as then $J_x(\theta_B) = 0$. Now fix $x \in \frac{6}{5} B \cap \widehat{B}$. If $\psi_{B'}(x) \neq 0$ then $x \in \frac{6}{5} B'$. In particular, $\frac{6}{5} B \cap \frac{6}{5} B' \neq \emptyset$, and hence $\diam(B')/\diam(B) \in [ \frac{1}{8},8 ]$. Furthermore, by Lemma \ref{gg_lem}, the cardinality of  $\cW_x := \{ B' : x \in \frac{6}{5}B' \}$ is at most $100^n$. Hence, 
\begin{equation}\label{brown}
\begin{aligned}
|\partial^\alpha \Psi(x) | & \leq \sum_{B' \in \cW_x} | \partial^\alpha \psi_{B'}(x)| \\
& \leq \sum_{B' \in \cW_x} C \diam(B')^{-|\alpha|} \leq C' \diam(B)^{-|\alpha|} \quad (|\alpha| \leq m).
\end{aligned}
\end{equation}
By a repeated application of the quotient rule for differentiation, and substituting the bounds \eqref{brown} and $| \partial^\alpha \psi_B(x) | \leq C \diam(B)^{- |\alpha|}$,  we conclude that $| \partial^\alpha \theta_B(x) | = | \partial^\alpha (\psi_B/\Psi)(x) | \leq C'' \diam(B)^{-|\alpha|}$ for $|\alpha| \leq m$.
\end{proof}

We mention a few additional properties of the partition of unity $\{\theta_B \}$ in Lemma \ref{pou_lem1}. First, by property 2 of Lemma \ref{pou_lem1} and the definition of the scaled norm $|\cdot |_{x,\delta}$,
\begin{equation}
\label{add_p1}
| J_x (\theta_B) |_{x,\diam(B)} \leq C \diam(B)^{-m} \quad (x \in \widehat{B}).
\end{equation}
By the equivalence of $C^{m-1,1}(\widehat{B})$ and the homogeneous Sobolev space $\dot{W}^{m,\infty}(\widehat{B})$ and by property 2 of Lemma \ref{pou_lem1},
\begin{equation}\label{add_p2}
\| \theta_B \|_{C^{m-1,1}(\widehat{B})} \leq C \max_{|\alpha| = m} \| \partial^\alpha \theta_B \|_{L^\infty(\widehat{B})} \leq C \diam(B)^{-m}.
\end{equation}

\begin{lemma}[Gluing lemma]\label{pou_lem2}
Fix a Whitney cover $\cW$ of $\widehat{B}$, a partition of unity $\{\theta_B\}_{B \in \cW}$ as in Lemma \ref{pou_lem1}, and points $x_B \in \frac{6}{5} B$ for each $B \in \cW$. Suppose $\{F_B\}_{B \in \cW}$ is a collection of functions in $C^{m-1,1}(\R^n)$ with the following properties:
\begin{itemize}
\item $\| F_B \| \leq M_0$.
\item $F_B = f$ on $E \cap \frac{6}{5} B$.
\item $| J_{x_B} F_B - J_{x_{B'}} F_{B'} |_{x_B,\diam(B)} \leq M_0$ whenever $\frac{6}{5} B \cap \frac{6}{5}B' \neq \emptyset$.
\end{itemize}
Let $F = \sum_{B \in \cW} \theta_B F_B$. Then $F \in C^{m-1,1}(\widehat{B})$ with $F=f$ on $E \cap \widehat{B}$ and $\| F \|_{C^{m-1,1}(\widehat{B})} \leq C M_0$, where $C$ is a constant determined by $m$ and $n$.
\end{lemma}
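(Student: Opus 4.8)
The plan is to estimate $\|F\|_{C^{m-1,1}(\widehat B)}$ by controlling, for each pair $x,y\in\widehat B$, the quantity $|J_xF-J_yF|_{x,\rho}$ with $\rho=|x-y|$, and then invoke the equivalence of the $C^{m-1,1}$ seminorm with the finite-difference-of-jets description (this is exactly Whitney's theorem \eqref{whit_cond} applied to $\widehat B$, or equivalently Taylor's theorem \eqref{taylor_thm} read backwards on the convex domain $\widehat B$). So the real content is a pointwise jet estimate. First I would split into two cases according to whether $\rho$ is comparable to the diameters of the balls containing $x$ and $y$ or much smaller; in fact a cleaner route is to fix $x$ and a ball $B_0\in\cW$ with $x\in\tfrac65 B_0$ (which exists since $\cW$ covers $\widehat B$), subtract off the fixed jet $J_{x_{B_0}}F_{B_0}$ using the partition of unity identity $\sum_B\theta_B=1$, and write
\[
F-F_{B_0}=\sum_{B\in\cW}\theta_B\,(F_B-F_{B_0}).
\]
Then $J_x(F-F_{B_0})=\sum_{B}\,J_x(\theta_B)\odot_x J_x(F_B-F_{B_0})$, where only the $B$ with $x\in\tfrac65 B$ contribute, and by Lemma \ref{gg_lem} there are at most $100^n$ of them.

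The key estimates feeding into this are: (1) for each such $B$, $J_{x_B}F_B-J_{x_{B_0}}F_{B_0}\in CM_0\,\cB_{x_B,\diam(B)}$ by the third hypothesis (using that $\tfrac65 B\cap\tfrac65 B_0\ni x$ so the hypothesis applies, and that all these balls have comparable diameters by Definition \ref{Whit_cov_def}(c), so we may freely swap basepoints and scales via \eqref{trans_norm} and \eqref{eq:change_delta}); (2) $J_xF_B-J_xF_{B_0}\in CM_0\,\cB_{x,\diam(B_0)}$ for those $B$, obtained from (1) by Taylor's theorem \eqref{taylor_thm} to move the basepoints from $x_B,x_{B_0}$ to $x$ at scale $\diam(B_0)$; (3) the bound on the partition of unity jets $J_x(\theta_B)\in C\diam(B)^{-m}\cB_{x,\diam(B)}$ from \eqref{add_p1}. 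Combining these with Lemma \ref{lem_c1}(ii) (the product estimate) gives, for each contributing $B$,
\[
\bigl|J_x(\theta_B)\odot_x(J_xF_B-J_xF_{B_0})\bigr|_{x,\diam(B_0)}\le C\diam(B)^m\cdot\diam(B)^{-m}\cdot CM_0=CM_0,
\]
and summing over the $\le 100^n$ terms yields $|J_xF-J_{x}F_{B_0}|_{x,\diam(B_0)}\le CM_0$, hence by the triangle inequality and $\|F_{B_0}\|\le M_0$ (using \eqref{taylor_thm} once more to compare $J_{x_{B_0}}F_{B_0}$ and $J_xF_{B_0}$) we get a clean bound on $J_xF$ relative to a fixed jet near $x$.

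To finish, I would run the same computation at $y$ with the \emph{same} reference ball when $\tfrac65 B_0$ also contains $y$ — which is the case exactly when $\rho=|x-y|$ is at most roughly $\diam(B_0)$, after possibly enlarging the relevant neighborhood — and take differences: the terms $J_x(\theta_B)\odot_x(\cdot)-J_y(\theta_B)\odot_y(\cdot)$ are now handled by Lemma \ref{lem_prod_cont} (with the two factors being the partition-of-unity jet and the difference $J_\cdot F_B-J_\cdot F_{B_0}$), using that each factor satisfies both a size bound at scale $\diam(B_0)$ and a difference bound at scale $\rho$ — the difference bounds coming from Taylor's theorem \eqref{taylor_thm} applied to $\theta_B$ and to $F_B-F_{B_0}$ respectively. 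This produces $|J_xF-J_yF|_{x,\rho}\le CM_0$ whenever $\rho\lesssim\diam(B_0)$. In the remaining regime $\rho\gg\diam(B_0)$ the estimate $|J_xF-J_yF|_{x,\rho}\le CM_0$ follows directly from \eqref{eq:change_delta}, since $|J_xF-J_yF|_{x,\rho}\le(\diam(B_0)/\rho)\,|J_xF-J_yF|_{x,\diam(B_0)}$ and the latter is controlled by adding the two separate bounds on $J_xF$ and $J_yF$ relative to jets of $F_{B_0}$ and $F_{B'_0}$, together with $\|F_{B_0}\|,\|F_{B'_0}\|\le M_0$ and the third hypothesis chaining $B_0$ to $B'_0$ through a bounded-length path of overlapping balls (here one uses the Whitney cover structure: any two balls can fail to overlap, but the geometric control only needs to be invoked when they do). The main obstacle is bookkeeping the basepoint-and-scale changes consistently so that every invocation of Lemma \ref{lem_c1}, Lemma \ref{lem_prod_cont}, \eqref{trans_norm}, and \eqref{eq:change_delta} is legitimate (all balls in sight have comparable diameters, and all basepoints lie within $O(\diam(B_0))$ of each other); the analytic input is entirely routine once that is set up. The claim that $F=f$ on $E\cap\widehat B$ is immediate: if $z\in E\cap\widehat B$ then $F(z)=\sum_B\theta_B(z)F_B(z)$, and for each $B$ with $\theta_B(z)\neq0$ we have $z\in\tfrac65 B$, so $F_B(z)=f(z)$ by the second hypothesis, whence $F(z)=f(z)\sum_B\theta_B(z)=f(z)$.
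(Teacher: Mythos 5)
Your argument for the small-$\rho$ regime (where $x$ and $y$ lie in a common $\tfrac{6}{5}B_0$) is essentially the paper's: decompose $F-F_{B_0}=\sum_B\theta_B(F_B-F_{B_0})$, observe only $O(100^n)$ terms contribute by Lemma~\ref{gg_lem}, and bound each term using Lemma~\ref{lem_prod_cont} together with Taylor's theorem \eqref{taylor_thm}, the partition-of-unity bounds \eqref{add_p1}--\eqref{add_p2}, and the base-point/scale changes \eqref{trans_norm}, \eqref{eq:change_delta}. That part is correct and is exactly what the paper does.

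However, there is a genuine gap in your treatment of the regime $\rho\gg\diam(B_0)$. You assert you can ``chain $B_0$ to $B_0'$ through a bounded-length path of overlapping balls'' and sum the third hypothesis along the chain. A Whitney cover gives no bound whatsoever on the combinatorial length of such a chain: diameters of balls in $\cW$ are comparable only for \emph{overlapping} balls, and across $\widehat B$ the diameters can vary by arbitrary factors, so two balls can be separated by arbitrarily many intermediate balls. Your quoted factor $(\diam(B_0)/\rho)$ from \eqref{eq:change_delta} is also not by itself enough: you would need a careful accounting showing that the errors incurred along the chain, measured at the right scale, sum to $O(\rho/\diam(B_0))\,M_0$, which requires controlling the geometry of the chain (the intermediate scales and the basepoint shifts), not just its existence. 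As stated, the claim is false.

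The observation you are missing is that the large-$\rho$ case never needs to be handled at all. The paper uses the elementary fact (proved by subdividing the segment from $x$ to $y$ into short pieces and applying the triangle inequality) that $F\in C^{m-1,1}(\widehat B)$ if and only if there exist $\epsilon>0$ and $M$ such that $|\partial^\alpha F(x)-\partial^\alpha F(y)|\le M|x-y|$ for all $|\alpha|=m-1$ and all $x,y\in\widehat B$ with $|x-y|\le\epsilon$, and that the least such $M$ is comparable to the seminorm. Taking $\epsilon=\tfrac{1}{100}\min_{B\in\cW}\diam(B)$, one may restrict from the outset to pairs with $\rho\le\epsilon$. Then for any ball $B_0\in\cW$ with $x\in B_0$ (note: $x\in B_0$, not merely $x\in\tfrac{6}{5}B_0$, so that there is genuine slack), both $x$ and $y$ lie in $\tfrac{6}{5}B_0$, and your small-$\rho$ computation applies directly. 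This removes the chaining issue entirely, and it also fixes a minor weakness in your setup: choosing $B_0$ with only $x\in\tfrac{6}{5}B_0$ does not guarantee $y\in\tfrac{6}{5}B_0$ even for small $\rho$ if $x$ sits on the boundary of $\tfrac{6}{5}B_0$.

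The verification that $F=f$ on $E\cap\widehat B$ is correct and agrees with the paper.
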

\begin{proof}

The nonzero terms in the sum $F(x) = \sum_B \theta_B(x) F_B(x)$, $x \in E \cap \widehat{B}$, occur when $x \in \frac{6}{5} B$. By assumption,  $F_B(x) = f(x)$ for such $B$. Thus $F(x) = \sum_B \theta_B(x) f(x) = f(x)$. Therefore,  $F = f $ on $E \cap \widehat{B}$. 

We will now bound the seminorm of $F$. We will use the following characterization of the $C^{m-1,1}$ function class: $F \in C^{m-1,1}(\widehat{B})$ if and only if there exists $\epsilon > 0$ and $M \geq 0$ such that $|\partial^{\alpha} F(x) - \partial^\alpha F(y) | \leq M \cdot |x-y|$ for all $x,y \in \widehat{B}$ with $|x-y| \leq \epsilon $ and all multiindices $\alpha$ with $|\alpha|= m-1$. Furthermore, the seminorm $\| F \|_{C^{m-1,1}(\widehat{B})}$ is comparable to the least $M$ as above, up to constant factors depending on $m$ and $n$. This characterization is an easy consequence of the triangle inequality on $\R^n$; we leave the proof as an exercise for the reader. Thus, it suffices to prove that if $|x-y| \leq \frac{1}{100} \delta_{\min}$ for $\delta_{\min} := \min_{B \in \cW}\diam(B)$, then
\begin{equation}\label{eq11}
| J_x F - J_y F|_{x,\rho} \leq C M_0, \mbox{ for } \rho := |x-y|.
\end{equation}

Fix an arbitrary ball $B_0 \in \cW$ with $x \in B_0$. Since $|x-y| \leq \frac{1}{100} \diam(B_0)$, both $x$ and $y$ belong to $\frac{6}{5} B_0$. Note that $\sum_B J_x  \theta_B = \sum_B J_y \theta_B  = 1$. This lets us write
\[
\begin{aligned}
J_x F - J_y F  & =  \sum_{B \in \cW} \biggl[ (J_x F_B - J_x F_{B_0}) \odot_x J_x \theta_B - (J_y F_B - J_y F_{B_0}) \odot_y J_y \theta_B \biggr] \\
& \qquad\qquad + ( J_x F_{B_0} - J_y F_{B_0}).
\end{aligned}
\]
The summands in the main sum on the right-hand side are nonzero only if $x \in \frac{6}{5} B$ or $y \in \frac{6}{5} B$. By Lemma \ref{gg_lem}, there can be at most $2 \cdot 100^n$ many elements $B \in \cW$ with this property. Therefore, to prove inequality \eqref{eq11} it suffices to show that the $|\cdot |_{x,\rho}$ norm of each summand  on the right-hand side is at most $C M_0$. To start, consider the last term and apply Taylor's theorem (in the form \eqref{taylor_thm}):
\[
| J_x F_{B_0} - J_y F_{B_0} |_{x,\rho} \leq C_T  \| F_{B_0} \| \leq C M_0.
\]
Next we select a summand in the main sum by fixing an element $B \in \cW$ with either $x \in \frac{6}{5} B$ or $y \in \frac{6}{5} B$. In either case, $\frac{6}{5} B \cap \frac{6}{5} B_0 \neq \emptyset$. Let $\delta := \diam(B)$. By condition (c) in the definition of a Whitney cover (see Definition \ref{Whit_cov_def}), we have $\delta/\diam(B_0) \in [\frac{1}{8},8]$. Define four polynomials $P_x = J_x(F_B) - J_x(F_{B_0})$ and $R_x = J_x(\theta_B)$, and similarly $P_y=J_y(F_B) - J_y(F_{B_0})$ and $R_y = J_y(\theta_B)$. We will be finished once we show that
\begin{equation}\label{eq12}
| P_x \odot_x R_x - P_y \odot_y R_y|_{x,\rho} \leq C M_0.
\end{equation}

We will  prove \eqref{eq12} using Lemma \ref{lem_prod_cont} (specifically, the form in Remark \ref{rem1}). Let us verify that the hypotheses of this lemma are satisfied. Using $|x-y| = \rho$ and Taylor's theorem (see \eqref{taylor_thm}),
\begin{equation}\label{eq13}
\begin{aligned}
|P_x - P_y|_{x,\rho} &\leq | J_x(F_B) - J_y(F_B)|_{x,\rho} + |J_x(F_{B_0}) - J_y(F_{B_0})|_{x,\rho} \\
&\leq C_T \cdot ( \| F_B \| + \| F_{B_0} \|) \leq  C M_0.
\end{aligned}
\end{equation}
Next write $| P_x |_{x,\delta} \leq |P_{x_{B_0}} - P_x |_{x,\delta} + |P_{x_{B_0}}|_{x,\delta}$. As $x \in B_0$ and $x_{B_0} \in \frac{6}{5}B_0$, we have $|x-x_{B_0}| \leq \frac{6}{5} \diam(B_0) \leq 3 \delta$. Thus, by \eqref{eq:change_delta} and following the proof of \eqref{eq13}, $|P_{x_{B_0}} - P_x |_{x,\delta} \leq 3^m |P_{x_{B_0}} - P_x |_{x,3 \delta} \leq C' M_0$. Then by \eqref{eq:change_delta} and \eqref{trans_norm}, the hypothesis in the third bullet point of this lemma, and another application of Taylor's theorem, 
\[
\begin{aligned}
|P_{x_{B_0}}|_{x,\delta} &\leq | J_{x_B} (F_B) - J_{x_{B_0}}(F_{B_0})|_{x,\delta} + |J_{x_B} (F_B) - J_{x_{B_0}}(F_B)|_{x,\delta}   \\
&\leq C | J_{x_B} (F_B) - J_{x_{B_0}}(F_{B_0})|_{x_{B_0},\delta} + C |J_{x_B} (F_B) - J_{x_{B_0}}(F_B)|_{x_{B_0},\delta} \\
& \leq C' | J_{x_B} (F_B) - J_{x_{B_0}}(F_{B_0})|_{x_{B_0},\diam(B_0)} + C' |J_{x_B} (F_B) - J_{x_{B_0}}(F_B)|_{x_{B_0},4\delta} \\
&\leq C'' M_0.
\end{aligned} 
\]
Here, note we are using that $|x_B - x_{B_0}| \leq \frac{6}{5} \diam(B) + \frac{6}{5} \diam(B_0) \leq 4 \delta$ in the final application of Taylor's theorem. In conclusion, $|P_x|_{x,\delta} \leq C M_0$. By the identical argument, $|P_y|_{y,\delta} \leq C M_0$ -- then by \eqref{trans_norm},  $|P_y|_{x,\delta} \leq C' M_0$.

Next, note the estimate $|R_x - R_y|_{x,\rho} \leq C \delta^{-m}$ is a direct consequence of Taylor's theorem and \eqref{add_p2}. Also, $|R_x|_{x,\delta} \leq C \delta^{-m}$ is a direct consequence of \eqref{add_p1}. Similarly,  $|R_y |_{y,\delta} \leq C \delta^{-m}$, and thus by \eqref{trans_norm},  $|R_y|_{x,\delta} \leq C' \delta^{-m}$.

We obtain \eqref{eq12} by an application of Lemma \ref{lem_prod_cont} (see Remark \ref{rem1}), which finishes the proof of the lemma.
\end{proof}

\section{Transversality}

Let $(X,\langle \cdot, \cdot \rangle)$ be a real Hilbert space of finite dimension $d := \dim X < \infty$. We denote the norm of $X$ by $|\cdot| = \sqrt{ \langle \cdot,\cdot \rangle}$, and let $\cB$ be the unit ball of $X$. Let $\cS$ be the set of closed symmetric convex subsets of $X$, and let $d_H : \cS \times \cS \rightarrow [0,\infty]$ be the Hausdorff metric, namely,
\[
d_H(\Omega_1,\Omega_2) := \inf \{ \epsilon > 0 : \Omega_1 \subset \Omega_2 + \epsilon \cB, \; \Omega_2 \subset \Omega_1 + \epsilon \cB \}.
\]
Given a set $A \subset X$ and subspace $V \subset X$, let $A/V$ (the \emph{quotient} of $A$ by $V$) be the image of $A$ under the quotient mapping $\pi : X \rightarrow X/V$, i.e., $A/V := \{ a + V : a \in A \}$.
\begin{definition}
Let $V$ be a linear subspace of $X$, let $\Omega \in \cS$, and let $R \geq 1$. We say that $\Omega$ is $R$-transverse to $V$ if (1) $\cB/V \subset R \cdot (\Omega \cap \cB)/V$, and (2) $\Omega \cap V \subset R \cdot \cB$.
\end{definition}


\begin{lemma}[Stability I] \label{lem000} If $\Omega$ is $R$-transverse to $V$, then $\Omega + \lambda \cB$ is $(R + 3 R^2 \lambda)$-transverse to $V$ for any $\lambda > 0$.
\end{lemma}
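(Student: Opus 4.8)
Let $R$-transversality of $\Omega$ to $V$ give us the two defining properties. I want to show $\Omega' := \Omega + \lambda\cB$ is $R'$-transverse to $V$ with $R' = R + 3R^2\lambda$. The plan is to verify the two conditions of the definition in turn. For condition (1), I must bound $\cB/V$ in terms of $(\Omega' \cap \cB)/V$; since $\Omega \cap \cB \subset \Omega' \cap \cB$ (as $0 \in \lambda\cB$ means $\Omega \subset \Omega'$), I get $\cB/V \subset R\cdot(\Omega \cap \cB)/V \subset R\cdot(\Omega' \cap \cB)/V \subset R'\cdot(\Omega' \cap \cB)/V$ since $R \le R'$. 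So condition (1) is essentially for free. For condition (2), I need $\Omega' \cap V \subset R'\cdot\cB$. This is the real content.

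Here is how I would attack condition (2). Take $x \in (\Omega + \lambda\cB) \cap V$, so $x = p + q$ with $p \in \Omega$, $|q| \le \lambda$, and $x \in V$. I need a bound $|x| \le R'$. The natural move is to decompose $p$ along $V$ and $V^\perp$: write $p = p_V + p_{V^\perp}$. Then $x \in V$ forces $p_{V^\perp} = -q_{V^\perp}$, where $q = q_V + q_{V^\perp}$, so $|p_{V^\perp}| \le |q| \le \lambda$. The idea is to produce from $p$ a vector lying in $\Omega \cap V$, whose norm is then controlled by property (2) of $\Omega$, and to pay for the correction with property (1). Concretely, $p_V = \pi_V(p)$ where $\pi_V$ is orthogonal projection; $p_V$ need not lie in $\Omega$, but $p_V$ and $p$ have the same image in... no — I should instead use (1) to say that since $p_{V^\perp}$ is small, $p$ is close (in the $V^\perp$-direction) to something in $\Omega \cap \cB$ scaled appropriately, then project. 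Let me reorganize: if $|p| \le 1$ the bound is easy, so consider the general case and rescale. Actually the cleanest route: by property (1) applied via Hahn–Banach-type reasoning, any $v \in \cB$ differs from a vector of $R(\Omega \cap \cB)$ by an element of $V$; so writing $p = |p|\cdot(p/|p|)$ with $p/|p| \in \cB$, there is $w \in R(\Omega \cap \cB)$ with $p/|p| - w \in V$, hence $|p| w \in \Omega$ (using $R \ge 1$, $|p|\,R(\Omega\cap\cB) \subset |p| R \Omega$... need $|p|R \ge$ something — here one splits into $|p| \le 1$ and $|p| > 1$ cases, or absorbs) and $p - |p|w \in V$. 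Then $x - |p|w \in V$ as well, and $x - |p|w = p + q - |p|w = q + (p - |p|w)$, so its $V$-component stays bounded while... I then apply property (2) to the element of $\Omega \cap V$ I have manufactured. Tracking the constants through the two cases $|p|\le 1$ and $|p|>1$ should yield exactly $R + 3R^2\lambda$.

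**Expected main obstacle.** The bookkeeping in condition (2): property (1) is stated as an inclusion of quotients, $\cB/V \subset R(\Omega\cap\cB)/V$, and turning this into a usable "any small vector is, modulo $V$, inside $R(\Omega\cap\cB)$" statement, then combining it with the decomposition $p = p_V + p_{V^\perp}$ and the smallness $|p_{V^\perp}| \le \lambda$, while keeping the scaling factor linear in $\lambda$ and getting the precise coefficient $3R^2$, is where care is needed. In particular one must handle the possibility that $p$ is large (not in $\cB$) — there the factor $R^2$ (rather than $R$) enters, because one first uses property (1) at unit scale and then rescales, compounding the two transversality estimates. I expect the argument will split into the easy regime where $|q| = |p_{V^\perp}|$ dominates and the regime where $|p_V|$ is large, with the $\lambda$-linear term coming from estimating $\operatorname{dist}(x, \Omega \cap V)$ by $|q|$ times a factor built from $R$.
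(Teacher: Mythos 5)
Your treatment of condition (1) is fine and matches the paper. The problem is in condition (2), where two specific claims fail.

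First, from $p/|p| - w \in V$ with $w \in R(\Omega \cap \cB)$ you conclude $|p|w \in \Omega$. But $|p|w \in |p|R(\Omega \cap \cB) \subset |p|R\,\Omega$, which is contained in $\Omega$ only when $|p|R \le 1$. In the regime $|p| > 1$ that you explicitly say you are considering, this inclusion simply does not hold. Second, the assertion ``$x - |p|w \in V$'' is false: $p - |p|w \in V$, so $x - |p|w = q + (p - |p|w)$ lies in $q + V$, which is in $V$ only if $q \in V$ — but $q$ is an arbitrary element of $\lambda\cB$. These are not bookkeeping issues; they indicate that applying property (1) to $p/|p|$ is the wrong move, because after rescaling by $|p|$ the correction is no longer small, and there is no reason for the corrected vector to land in $V$.

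The fix is the one you actually sketched and then abandoned: apply property (1) to the \emph{small} piece, not to $p$. In the paper's proof one writes $P = P_0 + P_1$ with $P_0 \in \Omega$, $P_1 \in \lambda\cB$, and uses $\lambda\cB/V \subset R\lambda\,(\Omega \cap \cB)/V$ to find $P_2 \in R\lambda(\Omega \cap \cB)$ with $P_1 - P_2 \in V$. Then $\widetilde{P} := P - (P_1 - P_2) = P_0 + P_2$ is automatically in $V$ (since $P \in V$ and $P_1 - P_2 \in V$) and, by convexity, in $(1 + R\lambda)\Omega$; hence $\widetilde{P} \in (1+R\lambda)(\Omega \cap V) \subset (1 + R\lambda)R\,\cB$ by property (2). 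Adding back $P_1 - P_2 \in \lambda\cB + R\lambda\cB$ gives $P \in (R + R^2\lambda + \lambda + R\lambda)\cB \subset (R + 3R^2\lambda)\cB$. The point is that the correction $P_2$ is of size $O(R\lambda)$, so no case split on $|p|$ is needed and the $R^2$ arises simply from composing (1) (giving $R\lambda$) with (2) (another factor $R$). Your $p_{V^\perp}$ decomposition, before you ``reorganized,'' leads to an essentially identical argument; it was the pivot to $p/|p|$ that broke it.
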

\begin{proof}
Suppose $\Omega$ is $R$-transverse to $V$. By condition (1) in the definition of transversality, we have 
\[
\cB /V \subset R \cdot  ( \Omega \cap \cB )/V \subset R \cdot \left( (\Omega + \lambda\cB)  \cap \cB \right)/V.
\]
 All that remains is to show
\[
(\Omega + \lambda\cB ) \cap V \subset (R + 3 R^2 \lambda) \cB.
\]
Fix $P \in (\Omega + \lambda \cB) \cap V$. Write $P = P_0 + P_1$ with $P_0 \in  \Omega$ and $P_1 \in \lambda \cB$. By condition (1) in the definition of transversality, we have $\lambda \cB/V \subset R \lambda(\Omega \cap \cB)/V$. Since $P_1 \in \lambda \cB$, there exists a polynomial $P_2 \in R \lambda (\Omega \cap \cB)$ with $P_1/V = P_2/V$ -- or rather, $P_1 - P_2 \in V$. Define $\tilde{P} := P - (P_1 - P_2) \in V$. Then $\tilde{P} = P_0 + P_2$. Because $P_0 \in \Omega$ and $P_2 \in R \lambda \cdot  \Omega$, we have $\tilde{P} \in (R\lambda + 1) \cdot ( \Omega \cap V) \subset (R \lambda + 1) \cdot  R \cB$, where the second inclusion uses condition (2) in the definition of transversality. Therefore,
\[
P =  \tilde{P} + P_1 - P_2 \in (R \lambda + 1) R  \cB + \lambda \cB + R \lambda \cB \subset (R^2 \lambda + R + \lambda + R \lambda) \cB.
\]
We conclude that $P \in (R + 3 R^2 \lambda) \cB$, which completes the proof of the lemma.

\end{proof}

\begin{lemma} [Stability II] \label{comp_lem2}
Let $\Omega_1, \Omega_2 \in \cS$, and let $R \geq 1$, $\tilde{R} \geq 4 R$. If $\Omega_1$ is $R$-transverse to $V$, then the following holds:
\begin{itemize}
\item If $d_H(\Omega_1,\Omega_2) \leq \frac{1}{4 R}$ then $\Omega_2$ is $4R$-transverse to $V$.
\item If $d_H(\Omega_1 \cap \tilde{R} \cB,\Omega_2 \cap \tilde{R} \cB ) \leq \frac{1}{4 R}$ then $\Omega_2$ is $4R$-transverse to $V$.
\end{itemize}
\end{lemma}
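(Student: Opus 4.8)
The strategy is to verify the two defining conditions of $4R$-transversality for $\Omega_2$, using the transversality of $\Omega_1$ together with the Hausdorff closeness and the elementary convexity facts \eqref{fact1} and \eqref{fact2}. For the first bullet, set $\epsilon := d_H(\Omega_1,\Omega_2) \leq \tfrac{1}{4R}$, so $\Omega_1 \subset \Omega_2 + \epsilon\cB$ and $\Omega_2 \subset \Omega_1 + \epsilon\cB$. Condition (2) is the easier one: $\Omega_2 \cap V \subset (\Omega_1 + \epsilon\cB)\cap V$, and since $\epsilon\cB \subset V^{\perp}$-free we simply use $\epsilon\cB \subset \tfrac{1}{4R}\cB \subset R\cB$ (as $R\ge 1$) together with \eqref{fact1} (with $A = \Omega_1$, $K = \epsilon\cB$, $T = R\cB$, which applies since $\epsilon\cB\subset R\cB$) to get $\Omega_2 \cap V \subset (\Omega_1 \cap 2R\cB) + \epsilon\cB \subset R\cB \cdot 2 + \tfrac{1}{4R}\cB$ — wait, more carefully: $(\Omega_1+\epsilon\cB)\cap V \subset (\Omega_1 \cap 2V)+\epsilon\cB$ is not quite the shape of \eqref{fact1}. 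Instead I use \eqref{fact1} directly with $T=V$ is not symmetric-bounded; so the cleaner route is: any $P \in \Omega_2\cap V$ has $P = Q + e$ with $Q\in\Omega_1$, $e\in\epsilon\cB$, hence $Q = P - e \in V + \epsilon\cB$; but that doesn't force $Q\in V$. So instead bound directly: $|P| \le \mathrm{something}$. The honest approach: project. Let $\pi$ be orthogonal projection onto $V$. Since $\Omega_1$ is $R$-transverse, $\Omega_1\cap V \subset R\cB$; and $P\in\Omega_2\cap V$ gives $P = Q+e$, so $\pi(Q) = \pi(P) - \pi(e) = P - \pi(e)$, thus $P = \pi(Q) + \pi(e)$ with $|\pi(e)|\le\epsilon$. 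Now $\pi(Q)$ need not lie in $\Omega_1$, so this still needs the transversality hypothesis in its full ``quotient'' strength — which is exactly what condition (1) $\cB/V \subset R(\Omega_1\cap\cB)/V$ is for. This is the step I expect to be the main obstacle: correctly chasing the quotient/projection bookkeeping so that the $\epsilon$-error, after being absorbed via \eqref{fact2}, yields the clean constant $4R$.

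Here is the cleaner organization I would actually write. For condition (1) of $4R$-transversality: we must show $\cB/V \subset 4R\cdot(\Omega_2\cap\cB)/V$. Given $\bar P \in \cB/V$, lift via $\Omega_1$-transversality to $P_1\in R(\Omega_1\cap\cB)$ with $P_1/V = \bar P$; then $P_1 \in R\Omega_1 \cap R\cB \subset (R\Omega_2 + R\epsilon\cB)\cap R\cB$. Using \eqref{fact1} with $A = R\Omega_2$, $K = R\epsilon\cB$, $T = R\cB$ (valid since $R\epsilon \le \tfrac14 < 1$ so $K\subset T$), we get $P_1 \in (R\Omega_2 \cap 2R\cB) + R\epsilon\cB$, so $P_1 = P_2 + e$ with $P_2 \in R\Omega_2\cap 2R\cB$ and $|e|\le R\epsilon \le \tfrac14$. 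Then $P_2/V$ differs from $\bar P$ by $e/V$ with $|e/V|\le\tfrac14$; iterating this construction (or, more slickly, applying \eqref{fact2} to the set $K := \cB/V$ inside $X/V$ against $T := 2R(\Omega_2\cap\cB)/V$, once one checks $\cB/V \subset T + \tfrac13(\cB/V)$ — which follows by rescaling the above with the factor $\tfrac14 < \tfrac13$ needing a slightly more careful constant, so I would instead just iterate the geometric series) yields $\bar P \in 4R\cdot(\Omega_2\cap\cB)/V$, absorbing the error with room to spare since $\sum_k 4^{-k} \cdot 2R < 4R$.

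For condition (2): given $P \in \Omega_2 \cap V$, write $P = Q + e$ with $Q\in\Omega_1$, $|e|\le\epsilon\le\tfrac{1}{4R}$. Apply \eqref{fact1} with $A=\Omega_1$, $K=\epsilon\cB$, $T = \tfrac{1}{4R}\cB$ — but we want $P$ itself controlled, and $P = Q+e \in \Omega_1 + \epsilon\cB$; intersecting with $V$: $P\in(\Omega_1+\epsilon\cB)\cap V$. By \eqref{fact1} (with $T = V\cap$(a large ball) to make it bounded, or simply: $(\Omega_1+\epsilon\cB)\cap V \subset (\Omega_1\cap 2V) + \epsilon\cB$ is the literal statement of \eqref{fact1} with $T=V$ dropped — actually \eqref{fact1} as stated needs $K\subset T$ and gives $(A+K)\cap T\subset (A\cap 2T)+K$; taking $T$ to be a huge ball $\cap V$ works), so $P = Q' + e'$ with $Q'\in\Omega_1\cap 2V \subset$ some ball. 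Cleaner: $P - e = Q\in\Omega_1$, and $\pi_V(Q) = P - \pi_V(e)$, and now $\pi_V(Q)$ lies in... this is where I would invoke that transversality also controls $\Omega_1\cap(V+\text{error})$. Concretely, from condition (1) of $\Omega_1$'s transversality applied to the class of $Q$ modulo $V$: $Q/V\in\cB/V\cdot|Q|_{/V}$... the bottom line is $|P|\le R + (1 + R^2)\epsilon\cdot(\text{const}) \le R + \tfrac{C}{R} \le 4R$ using $R\ge1$. In fact, the very same computation as in Lemma~\ref{lem000} (Stability I) applies verbatim to $\Omega_1 + \epsilon\cB \supset \Omega_2$ when $\Omega_2\subset\Omega_1+\epsilon\cB$: by Lemma~\ref{lem000}, $\Omega_1+\epsilon\cB$ is $(R + 3R^2\epsilon)$-transverse to $V$, and $R + 3R^2\epsilon \le R + 3R^2\cdot\tfrac{1}{4R} = R + \tfrac{3R}{4} < 2R \le 4R$; since $\Omega_2 \subset \Omega_1+\epsilon\cB$ gives $\Omega_2\cap V \subset (\Omega_1+\epsilon\cB)\cap V \subset 2R\cB$ (condition 2), and $\Omega_2\supset$? — no, we don't have $\Omega_2\supset\Omega_1-\epsilon\cB$ pointwise, but we do have $\Omega_1\subset\Omega_2+\epsilon\cB$, i.e. $\Omega_2+\epsilon\cB\supset\Omega_1$, hence $\cB/V\subset R(\Omega_1\cap\cB)/V\subset R((\Omega_2+\epsilon\cB)\cap\cB)/V$ — then the argument of Lemma~\ref{lem000} run ``in the other direction'' on the quotient closes condition (1) with constant $\le 4R$. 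So the whole first bullet reduces to two applications of the Stability~I mechanism, one for each containment, which is the slick way I would present it.

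For the second bullet, the point is that all the quantities estimated above — the lifts $P_1\in R(\Omega_i\cap\cB)$ and the witnesses to $\Omega_i\cap V$ — live inside the ball of radius $\le 2R \le 4R \le \tilde R$, so the hypothesis $d_H(\Omega_1\cap\tilde R\cB,\ \Omega_2\cap\tilde R\cB)\le\tfrac{1}{4R}$ provides exactly the $\epsilon$-approximations needed at every step where the first bullet used $d_H(\Omega_1,\Omega_2)\le\tfrac{1}{4R}$; one just has to check that whenever we approximate a point $P$ of $\Omega_1$ (or $\Omega_2$) by a point of the other set, that $P$ already satisfies $|P|\le\tilde R$ so that it lies in the truncation and the truncated Hausdorff bound applies. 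I would state this verification once and then note the proof of the first bullet goes through unchanged. The main obstacle throughout is purely bookkeeping: keeping the constants below $4R$ while juggling the quotient-space estimates and the geometric-series absorption of errors; no new idea beyond Lemma~\ref{lem000} and the facts \eqref{fact1}--\eqref{fact2} is required.
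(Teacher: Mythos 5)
Your proof is correct and follows essentially the same route as the paper: condition (2) follows from Lemma~\ref{lem000} applied to $\Omega_1 + \epsilon\cB \supset \Omega_2$, and condition (1) by lifting $\bar P \in \cB/V$ through the $\Omega_1$-transversality, passing through $\Omega_1 \subset \Omega_2 + \epsilon\cB$ via \eqref{fact1}, and absorbing the error. Two minor points: your hesitation about \eqref{fact2} is unfounded, since $\tfrac14(\cB/V) \subset \tfrac13(\cB/V)$ means it applies directly (no geometric-series iteration is needed); and for the second bullet the paper avoids tracking lift sizes by simply observing that $\Omega$ is $R$-transverse to $V$ iff $\Omega\cap\tilde R\cB$ is (for $\tilde R\ge R$), then applying the first bullet to the truncated sets, which is cleaner than re-running the argument with boundedness checks.
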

\begin{proof}
For the proof of the first bullet point, we may suppose $\Omega_1 \subset \Omega_2 + \lambda \cB$ and $\Omega_2 \subset \Omega_1 + \lambda \cB$ for $\lambda = \frac{1}{3 R}$. According to Lemma \ref{lem000}, $\Omega_1 + \lambda \cB$ is $2R$-transverse to $V$. Thus,
\begin{equation}\label{ee1}
\Omega_2 \cap V \subset \left(\Omega_1 + \lambda  \cB \right) \cap V \subset 2R \cdot  \cB.
\end{equation}
Also,
\[
 \cB/V \subset R \cdot \left( \Omega_1 \cap  \cB \right)/V \subset R \cdot \left( \left(\Omega_2 + \lambda  \cB \right) \cap  \cB \right)/V.
\]
By \eqref{fact1}, $(\Omega_2 + \lambda \cB ) \cap  \cB \subset ( \Omega_2 \cap 2  \cB) + \lambda \cB$, hence,
\[
 \cB/V  \subset R \cdot ( \Omega_2 \cap 2  \cB + \lambda  \cB )/V = R \cdot ( \Omega_2 \cap 2  \cB)/V + R \lambda \cdot  \cB /V.
\]
Recall $R \lambda = \frac{1}{3}$, hence $K \subset T + K/3$ for $K =  \cB/V$ and $T = R \cdot  ( \Omega_2 \cap 2  \cB)/V$. From \eqref{fact2} we conclude that $K \subset 2T$, i.e.,
\begin{equation}\label{ee2}
 \cB/V \subset 2 R \cdot ( \Omega_2 \cap 2  \cB  )/V \subset 4R \cdot (\Omega_2 \cap  \cB )/V.
\end{equation}
From \eqref{ee1} and \eqref{ee2} we conclude that $\Omega_2$ is $4R$-transverse to $V$.

Note $\Omega_1$ is $R$-transverse to $V$ iff $\Omega_1 \cap \tilde{R} \cB$ is $R$-transverse to $V$ (since $\tilde{R} \geq R$), and similarly, $\Omega_2$ is $4R$-transverse to $V$ iff $\Omega_2 \cap \tilde{R}  \cB$ is $4R$-transverse to $V$ (since $\tilde{R} \geq 4R$). Thus, by applying the first bullet point to the sets $\Omega_1 \cap \tilde{R} \cB$ and $\Omega_2 \cap \tilde{R}  \cB$, we obtain the conclusion in the second bullet point.
\end{proof}

\begin{lemma}[Stability III]\label{lem01}
Suppose $\Omega$ is $R$-transverse to $V$, and let $U : X \rightarrow X$ be a unitary transformation. Then $U(\Omega)$ is $R$-transverse to $U(V)$. If additionally $\| U  - id \|_{op} \leq \frac{1}{16R^2}$, then $U(\Omega)$ is $4R$-transverse to $V$ and $\Omega$ is $4R$-transverse to $U(V)$.
\end{lemma}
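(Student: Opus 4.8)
The plan is to prove Lemma \ref{lem01} in two stages: first the exact statement about unitary conjugation, then the perturbative statement. For the first part, let $\pi_V : X \to X/V$ and $\pi_{U(V)} : X \to X/U(V)$ be the quotient maps. Since $U$ is unitary, it carries $V$ to $U(V)$ isometrically and descends to an isometry $\bar U : X/V \to X/U(V)$ with $\bar U \circ \pi_V = \pi_{U(V)} \circ U$; moreover $U(\cB) = \cB$ and $U$ commutes with intersection and scaling in the obvious way. Applying $U$ to condition (1), $\cB/V \subset R\cdot (\Omega \cap \cB)/V$, and pushing through $\bar U$ gives $\cB/U(V) = \bar U(\cB/V) \subset R \cdot \bar U\bigl((\Omega\cap\cB)/V\bigr) = R\cdot (U(\Omega)\cap\cB)/U(V)$; applying $U$ to condition (2), $\Omega\cap V \subset R\cB$, gives $U(\Omega)\cap U(V) = U(\Omega\cap V) \subset R\cdot U(\cB) = R\cB$. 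Hence $U(\Omega)$ is $R$-transverse to $U(V)$.

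For the second (perturbative) part, the idea is to trade the perturbation of the subspace for a perturbation of the set, and then invoke Stability II (Lemma \ref{comp_lem2}) or Stability I (Lemma \ref{lem000}). Concretely, assume $\|U - \id\|_{op} \leq \tfrac{1}{16R^2}$. One route: show $d_H(\Omega, U(\Omega)) \leq \tfrac{1}{4R}$, at least after intersecting with a large ball $\tilde R\cB$ with $\tilde R \geq 4R$. Indeed, for $P \in \Omega$ with $|P| \leq \tilde R$ we have $|U(P) - P| \leq \|U-\id\|_{op}|P| \leq \tilde R/(16R^2)$; taking $\tilde R$ to be (a fixed multiple of) $R$ — note transversality already forces the ``relevant'' part of $\Omega$, namely $\Omega \cap V$, into $R\cB$, and by condition (1) every coset of $\cB$ meets $R\Omega$, so the geometry of $\Omega$ is controlled at scale $\sim R$ — one arranges $d_H(\Omega\cap\tilde R\cB, U(\Omega)\cap\tilde R\cB) \leq \tfrac{1}{4R}$. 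Since $U(\Omega)$ is $R$-transverse to $U(V)$ by the first part, the second bullet of Lemma \ref{comp_lem2} (applied with the roles of $\Omega_1,\Omega_2$ being $U(\Omega)$ and $\Omega$, and $V$ replaced by $U(V)$) then yields that $\Omega$ is $4R$-transverse to $U(V)$. For the remaining claim that $U(\Omega)$ is $4R$-transverse to $V$: apply the statement just proved to $U^{-1}$ in place of $U$ and $U(\Omega)$ in place of $\Omega$, noting $\|U^{-1} - \id\|_{op} = \|U - \id\|_{op} \leq \tfrac{1}{16R^2}$ (unitarity) and that $U(\Omega)$ is $R$-transverse to $U(V)$; this gives $U(\Omega)$ is $4R$-transverse to $U^{-1}(U(V)) = V$.

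The main obstacle I anticipate is the bookkeeping in the Hausdorff-metric comparison: one must be careful that a unitary perturbation of size $1/(16R^2)$ really does produce a Hausdorff perturbation of size at most $1/(4R)$ at the relevant scale, and that this scale is legitimately $\leq \tilde R$ for an admissible $\tilde R \geq 4R$. The subtlety is that $\Omega$ may be unbounded, so one genuinely needs the ``intersect with $\tilde R \cB$'' version of Stability II rather than the global one; the point is that $U$ distorts a vector $P$ by at most $\|U-\id\|_{op}|P|$, so on the ball of radius $\tilde R$ the distortion is at most $\tilde R \|U - \id\|_{op} \leq \tilde R/(16R^2)$, and choosing $\tilde R = 4R$ makes this $\leq 1/(4R)$, exactly the threshold required. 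Everything else — the descent of $U$ to quotients, the commutation with $\cap$ and scaling, the use of unitarity to pass between $U$ and $U^{-1}$ — is routine, so the proof should be short once the perturbation estimate is set up correctly.
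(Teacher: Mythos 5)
Your proof is correct and takes essentially the same route as the paper: verify directly that unitary maps preserve transversality, then bound $d_H(\Omega \cap 4R\cB,\,U(\Omega)\cap 4R\cB) \leq \|U-\operatorname{id}\|_{op}\cdot 4R \leq \tfrac{1}{4R}$ and invoke the truncated Hausdorff version of Stability~II (second bullet of Lemma~\ref{comp_lem2} with $\tilde R = 4R$), finishing by the $U \leftrightarrow U^{-1}$ swap. The only cosmetic difference is the order in which the two $4R$-transversality conclusions are derived; the intermediate remarks about transversality controlling the geometry of $\Omega$ at scale $\sim R$ are unnecessary, since the $d_H$ bound at scale $4R$ follows purely from the operator-norm distortion.
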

\begin{proof}
Unitary transformations preserve the metric structure of $X$, and in particular, they preserve transversality. If $\| U - id \|_{op} \leq  \frac{1}{16R^2}$ then
\[
d_H(\Omega \cap 4R \cB, U(\Omega) \cap 4R \cB )  = d_H(\Omega \cap 4R \cB, U(\Omega \cap 4R \cB))  \leq  \| U - id \|_{op}  \cdot 4R \leq \frac{1}{4 R}.
\]
Therefore, by Lemma \ref{comp_lem2}, $U(\Omega)$ is $4R$-transverse to $V$. Similarly, $U^{-1}(\Omega)$ is $4R$-transverse to $V$, and thus by the first claim we have that $\Omega$ is $4R$-transverse to $U(V)$.
\end{proof}

We also prove a version of Lemma \ref{lem000} in which the upper and lower inclusions on $\Omega$ involve two different constants. 

\begin{lemma}[Stability IV] \label{lem:stabv} Let $R,Z \geq 1$ and $\lambda \geq 1$ be given. If $\Omega$ is a symmetric closed convex set in a Hilbert space $X$, and $V \subset X$ is a subspace, satisfying (i) $ \cB/V \subset R \cdot (\Omega \cap \cB)/V$ and  (ii) $\Omega \cap V \subset Z \cB$, then
\begin{equation}\label{eqn:201}
(\Omega + \lambda \cB) \cap V \subset Z \cdot (3R \lambda + 1) \cB.
\end{equation}
\end{lemma}
\begin{proof}
To prove \eqref{eqn:201}, we copy the proof of Lemma \ref{lem000}; wherever we applied conditions (1) or (2) in the definition of transversality, we instead apply (i) or (ii).
\end{proof}


\subsection{Transversality in the space of polynomials}

\begin{definition}\label{trans_P_defn}
Given a closed, symmetric, convex set  $\Omega \subset \cP$, a subspace $V \subset \cP$, $R \geq 1$, $x \in \R^n$, and $\delta > 0$, we say that $\Omega$ is $(x,\delta, R)$-transverse to $V$ if $\Omega$ is $R$-transverse to $V$ with respect to the Hilbert space structure $(\cP, \langle \cdot,\cdot \rangle_{x,\delta})$, i.e., (1) $\cB_{x,\delta} / V \subset R \cdot (\Omega \cap \cB_{x,\delta})/ V$, and (2) $\Omega \cap V \subset R \cdot \cB_{x,\delta}$.
\end{definition}

Our next result establishes a few basic properties of transversality in this setting.
\begin{lemma}\label{lem00} If $\Omega$ is $(x,\delta,R)$-transverse to $V$, then the following holds:
\begin{itemize}
\item  $T_h \Omega$ is  $(x+h,\delta,R)$-transverse to $T_h V$.
\item $\tau_{x,r} \Omega$ is $(x,\delta/r,R)$-transverse to $\tau_{x,r} V$.
\item If $\delta' \in [ \kappa^{-1} \delta, \kappa \delta]$ for some $\kappa \geq 1$, then $\Omega$ is $(x,\delta', \kappa^{m} R)$-transverse to $V$.
\end{itemize}
\end{lemma}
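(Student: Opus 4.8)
The plan is to verify each of the three bullet points directly from Definition \ref{trans_P_defn}, using the algebraic identities (a)--(c) and \eqref{eq:change_delta} from the preliminaries. The first two bullets are essentially changes of variable, and the third is a comparison of balls at different scales.

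For the first bullet, I would apply the operator $T_h$ to the two defining inclusions of $(x,\delta,R)$-transversality. By identity (c)(iii), $T_h \cB_{x,\delta} = \cB_{x+h,\delta}$, and $T_h$ commutes with quotients by subspaces in the sense that $T_h(A/V) = (T_h A)/(T_h V)$ since $T_h$ is a linear isomorphism of $\cP$; moreover $T_h(\Omega \cap \cB_{x,\delta}) = (T_h\Omega)\cap \cB_{x+h,\delta}$ because $T_h$ is a bijection. Applying $T_h$ to condition (1), $\cB_{x,\delta}/V \subset R\cdot(\Omega\cap\cB_{x,\delta})/V$, yields $\cB_{x+h,\delta}/T_hV \subset R\cdot((T_h\Omega)\cap\cB_{x+h,\delta})/T_hV$, and similarly condition (2) transforms to $(T_h\Omega)\cap T_hV \subset R\cdot\cB_{x+h,\delta}$. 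This is exactly $(x+h,\delta,R)$-transversality of $T_h\Omega$ to $T_hV$. The second bullet is identical in spirit: apply $\tau_{x,r}$, use (b)(iii) in the form $\tau_{x,r}\cB_{x,\delta} = \cB_{x,\delta/r}$, and use that $\tau_{x,r}$ is a linear bijection of $\cP$ to commute with intersections and quotients.

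For the third bullet, suppose $\delta' \in [\kappa^{-1}\delta,\kappa\delta]$. The goal is to replace $\cB_{x,\delta}$ by $\cB_{x,\delta'}$ in both defining inclusions at the cost of a factor $\kappa^m$. From \eqref{eq:change_delta}, for any two scales $\rho \le \sigma$ one has $(\rho/\sigma)\cB_{x,\sigma}^{-1}$-type bounds; concretely, $\cB_{x,\delta'} \subset \kappa^m \cB_{x,\delta}$ and $\cB_{x,\delta} \subset \kappa^m\cB_{x,\delta'}$ both hold whenever $\delta,\delta'$ differ by at most a factor $\kappa$ (using the two-sided inclusion in \eqref{eq:change_delta} with the ratio bounded by $\kappa$, and noting the weaker exponent $1 \le m$ on one side). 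For condition (1): $\cB_{x,\delta'}/V \subset \kappa^m\cdot(\cB_{x,\delta})/V \subset \kappa^m R\cdot(\Omega\cap\cB_{x,\delta})/V \subset \kappa^m R\cdot(\Omega\cap\kappa^m\cB_{x,\delta'})/V$; here I must absorb the inner $\kappa^m$ dilate, using that for $r\ge 1$, $(\Omega\cap r\cB_{x,\delta'})/V \subset r\cdot(\Omega\cap\cB_{x,\delta'})/V$ when $\Omega$ is symmetric convex (since $\tfrac1r(\Omega\cap r\cB) \subset \Omega\cap\cB$), which gives a further $\kappa^m$ — so I should be careful to set up the scale comparison so that only a single power of $\kappa^m$ (or a fixed power absorbed into the constant) appears, perhaps by comparing $\delta'$ to $\delta$ on the side where the exponent is $1$ rather than $m$. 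For condition (2): $\Omega\cap V \subset R\cdot\cB_{x,\delta} \subset R\kappa^m\cdot\cB_{x,\delta'}$, directly.

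The main obstacle is bookkeeping the constant in the third bullet: one must route the scale-change inequalities \eqref{eq:change_delta} through the quotient and the intersection so that the final blow-up is $\kappa^m R$ and not a larger power like $\kappa^{2m}R$. The trick is to use the estimate $\cB_{x,\delta} \subset (\delta/\delta')\cB_{x,\delta'}$ (the gentle, exponent-one side of \eqref{eq:change_delta}) when enlarging the ambient ball inside $\Omega$, and the estimate $\cB_{x,\delta'} \subset (\delta/\delta')^m\cB_{x,\delta}$ only once, on the outermost ball; with $\delta/\delta' \le \kappa$ this yields precisely the exponent $m$. Once the inclusions are chained in this order, both conditions (1) and (2) for $(x,\delta',\kappa^m R)$-transversality follow, completing the proof.
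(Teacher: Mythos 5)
Your proposal is correct and follows essentially the same route as the paper's proof: the first two bullets by applying the linear bijections $T_h$ and $\tau_{x,r}$ to both defining inclusions and using $T_h\cB_{x,\delta}=\cB_{x+h,\delta}$ and $\tau_{x,r}\cB_{x,\delta}=\cB_{x,\delta/r}$; the third by combining the ball comparisons from \eqref{eq:change_delta} with the symmetric-convex fact $A\cap(r\cdot B)\subset r\cdot(A\cap B)$ for $r\geq 1$. Your hesitation about the exponent in the third bullet is resolved by observing that whichever of $\delta,\delta'$ is larger, one of the two ball comparisons has constant $1$ (the $\max\{1,\cdot\}$ form the paper uses makes this automatic), so only a single factor of $\kappa^m$ ever enters the chain.
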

\begin{proof}
The proof of the first and second bullet points is easy: Apply $T_h$ and $\tau_{x,r}$ to both sides of (1) and (2) in Definition \ref{trans_P_defn}, and use the identities $T_h \cB_{x,\delta} = \cB_{x+h,\delta}$ and $\tau_{x,r} \cB_{x,\delta} = \cB_{x,\delta/r}$. The third bullet point follows from the equivalence of the unit balls $\cB_{x,\delta} \subset \max\left\{1, \left(\delta/\delta' \right)^m \right\} \cdot \cB_{x,\delta'}$ and $\cB_{x,\delta'} \subset \max\left\{1, \left(\delta'/\delta \right)^m \right\} \cdot \cB_{x,\delta}$, as well as the property that $A \cap (r\cdot B) \subset r \cdot (A \cap B)$ if $A$ and $B$ are symmetric convex sets, and $r \geq 1$.

\end{proof}

The continuity of the mapping $x \mapsto \sigma(x)$ can be used to show that the transversality of the set $\sigma(x)$ with respect to a fixed subspace is stable with respect to small perturbations of the basepoint.

\begin{lemma}\label{trans_lem}
There exists $c_1 = c_1(m,n) > 0$ so that the following holds. Let $V \subset \cP$ be a subspace, $x,y\in  \R^n$, $\delta > 0$, $R \geq 1$.  Suppose that $\sigma(x)$ is $(x,\delta,R)$-transverse to $V$ and $|x-y| \leq c_1 \frac{\delta}{R}$. Then $\sigma(y)$ is $(y,\delta, 8 R)$-transverse to $V$.
\end{lemma}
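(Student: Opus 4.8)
The plan is to reduce the claim to Lemma \ref{comp_lem2} (Stability II) together with the continuity estimate for the map $z \mapsto \sigma(z)$ provided by Lemma \ref{sigma_trans_lem}. The point is that $\sigma(x)$ being $(x,\delta,R)$-transverse to $V$ is a statement about the Hilbert space $(\cP,\langle\cdot,\cdot\rangle_{x,\delta})$, while the target statement concerns $(\cP,\langle\cdot,\cdot\rangle_{y,\delta})$. So there are two independent perturbations to control: moving the \emph{set} from $\sigma(x)$ to $\sigma(y)$, and moving the \emph{inner product} from $|\cdot|_{x,\delta}$ to $|\cdot|_{y,\delta}$. I would handle them in that order.

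First I would deal with changing the inner product. By \eqref{trans_norm} (with $\lambda$ a bounded constant, since $|x-y|\le c_1\delta/R\le \delta$), the norms $|\cdot|_{x,\delta}$ and $|\cdot|_{y,\delta}$ are comparable up to a universal factor, and moreover the balls satisfy $\cB_{x,\delta}\subset \widetilde C\,\cB_{y,\delta}$ and vice versa. One should record that if $\sigma(x)$ is $R$-transverse to $V$ in the $(x,\delta)$-structure then it is $C'R$-transverse to $V$ in the $(y,\delta)$-structure, for a universal $C'$; this is a purely formal manipulation of the two defining inclusions in Definition \ref{trans_P_defn}, using $\cB_{x,\delta}\approx\cB_{y,\delta}$ and the elementary fact $A\cap(rB)\subset r(A\cap B)$ for symmetric convex $A,B$ and $r\ge1$ (as used in Lemma \ref{lem00}). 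Actually it is cleaner to note that the distortion between the two inner products, as a linear map, has operator norm close to $1$ — more precisely one can view the change of basepoint as a near-unitary transformation when $|x-y|$ is small relative to $\delta$ — and invoke Lemma \ref{lem01} (Stability III); but the crude comparison via \eqref{trans_norm} already suffices if one is willing to absorb a universal constant into $R$ at this stage and then track the final constant carefully to land at $8R$.

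Next I would deal with changing the set. Now both sets live in the \emph{same} Hilbert space $(\cP,\langle\cdot,\cdot\rangle_{y,\delta})$. By Lemma \ref{sigma_trans_lem}, $\sigma(x)\subset\sigma(y)+C_T\cB_{x,\delta}$ and symmetrically $\sigma(y)\subset\sigma(x)+C_T\cB_{y,\delta}$; using $\cB_{x,\delta}\subset(\delta'/\delta')$-type rescaling — more precisely, applying Lemma \ref{sigma_trans_lem} with $\delta$ replaced by $|x-y|$ and then \eqref{eq:change_delta} to pass from scale $|x-y|$ to scale $\delta$ — one gets $\sigma(x)\subset\sigma(y)+C_T(|x-y|/\delta)\cB_{x,\delta}$, hence $d_H(\sigma(x),\sigma(y))\le C\,(|x-y|/\delta)$ in the $(y,\delta)$-norm (again using $\cB_{x,\delta}\approx\cB_{y,\delta}$). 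Choosing $c_1$ small enough that $C\,(|x-y|/\delta)\le C\,c_1/R\le \frac{1}{4(C'R)}$, Lemma \ref{comp_lem2} (first bullet, with $\Omega_1=\sigma(x)$, $\Omega_2=\sigma(y)$, transversality constant $C'R$) yields that $\sigma(y)$ is $4C'R$-transverse to $V$ in the $(y,\delta)$-structure. Finally I would choose $c_1$ once and for all small enough (depending only on the universal constants $C,C',C_T,\widetilde C$) so that $4C'R$ improves to the claimed $8R$ — this is where the precise bookkeeping of the universal constants matters, and one may need to instead route through Lemma \ref{comp_lem2} with slightly different thresholds, but it is elementary.

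The main obstacle is purely the constant-chasing: keeping the two perturbations (inner product and set) from compounding into a constant worse than $8$. The conceptually delicate point is that Lemma \ref{comp_lem2} requires $\Omega_1$ and $\Omega_2$ to be compared in one fixed Hilbert space, so one must commit to the $(y,\delta)$-structure before invoking it, and the $(x,\delta)\to(y,\delta)$ comparison must be executed first and cheaply. Everything else — Lemma \ref{sigma_trans_lem}, \eqref{trans_norm}, \eqref{eq:change_delta}, and the elementary convex-set inclusion — is already available in the excerpt, so no genuinely new idea is needed beyond organizing these estimates in the right order.
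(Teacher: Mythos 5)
Your proposal is built from the same ingredients the paper uses---Lemma \ref{sigma_trans_lem} to bound the Hausdorff distance between $\sigma(x)$ and $\sigma(y)$, and Lemma \ref{comp_lem2} (Stability II) to carry the transversality across that perturbation---so the overall strategy is on the mark. The paper simply performs the two perturbations in the opposite order: it first applies Lemma \ref{comp_lem2} \emph{within the fixed} $(x,\delta)$-structure to pass from $\sigma(x)$ to $\sigma(y)$ (gaining a factor $4$), and only then changes the inner product from $(x,\delta)$ to $(y,\delta)$. Your reordering (inner product first, then set) is also workable, but it is marginally less clean since the hypothesis is stated in the $(x,\delta)$-structure, which is where Lemma \ref{comp_lem2} can be invoked directly.

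There is, however, a genuine gap in the constant-chasing at the inner-product step, and you half-identify it and half-dismiss it. You assert that the ``crude comparison via \eqref{trans_norm} already suffices if one is willing to absorb a universal constant into $R$'' and that shrinking $c_1$ will bring $4C'R$ down to $8R$. This cannot work as described: the constant $\widetilde C$ in \eqref{trans_norm} comes from Lemma \ref{lem_c1} and is a fixed constant of $m,n$; it does not tend to $1$ as $c_1\to 0$, so the inner-product change would cost a fixed factor $\widetilde C^2$ (one factor for each of the two defining inclusions in Definition \ref{trans_P_defn}), and $4\widetilde C^2R$ need not be $\leq 8R$. Choosing $c_1$ small does nothing to improve a $c_1$-independent constant. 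What is actually needed---and what the paper does---is the \emph{fine} ball comparison: since $|x-y|\leq c_1\delta/R\leq c_1\delta$, for $c_1$ small enough one has $(\tfrac{9}{10})\cB_{y,\delta}\subset\cB_{x,\delta}\subset(\tfrac{10}{9})\cB_{y,\delta}$, a comparison whose factor genuinely tends to $1$ as $c_1\to 0$; substituting these near-identity inclusions into the two transversality conditions costs only $(\tfrac{10}{9})^2 < 2$, giving $4\cdot(\tfrac{10}{9})^2R\leq 8R$. Your alternative suggestion to view the change of basepoint as a near-identity (``near-unitary'') map is essentially this correct idea, and you should commit to it rather than to \eqref{trans_norm}; note, though, that you should not try to route this through Lemma \ref{lem01} literally, since that lemma's hypothesis $\|U-\mathrm{id}\|_{op}\leq \frac{1}{16R^2}$ would force $c_1$ to depend on $R$, whereas the direct ball-inclusion argument above gives the needed bound with $c_1$ depending only on $m,n$.
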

\begin{proof}
If $c_1 < \frac{1}{4 C_T}$, where $C_T$ is the constant in \eqref{taylor_thm}, then by Lemma \ref{sigma_trans_lem},
\[
\sigma(y) \subset \sigma(x) + C_T \cdot \cB_{ x, c_1 \cdot \frac{\delta}{R}} \subset \sigma(x) + C_T  \cdot \biggl(\frac{c_1}{R} \biggr)  \cdot \cB_{x, \delta} \subset  \sigma(x) + \biggl(\frac{1}{4R} \biggr) \cdot \cB_{x,\delta}. 
\]
Similarly, $\sigma(x) \subset \sigma(y) + (\frac{1}{4R}) \cdot  \cB_{x,\delta}$. Thus, $d_H^{x,\delta}(\sigma(x),\sigma(y)) \leq \frac{1}{4R}$, where $d_H^{x,\delta}$ is the Hausdorff distance with respect to the norm $|\cdot|_{x,\delta}$ on $\cP$. From Lemma \ref{comp_lem2} we conclude that $\sigma(y)$ is  $(x,\delta,4R)$-transverse to $V$. Since $|x-y| \leq c_1  \delta/R \leq c_1 \delta$, if $c_1$ is sufficiently small then $(\frac{9}{10}) \cdot \cB_{y,\delta} \subset \cB_{x,\delta} \subset (\frac{10}{9}) \cdot \cB_{y,\delta}$. Therefore we can replace $\cB_{x,\delta}$ by $\cB_{y, \delta}$ in the definition of transversality, at the cost of increasing the constant $4R$ to $8R$. Thus, $\sigma(y)$ is  $(y,\delta,8R)$-transverse to $V$.
\end{proof}

\subsection{Ideals in the ring of polynomials and DTI subspaces}

\begin{definition}  \label{dti_defn}
A  subspace $V \subset \cP$ is translation-invariant if $T_{h} V = V$ for all $h \in \R^n$, and $V$ is dilation-invariant at $x \in \R^n$ if $\tau_{x,\delta}V = V$ for all $\delta > 0$. Say that $V$ is dilation-and-translation-invariant (DTI) if $T_h \tau_{x,\delta} V = V$ for all $x, h \in \R^n$, $\delta > 0$. We write $\dti$ to denote the collection of all DTI subspaces of $\cP$.
\end{definition}

\begin{rem}
Equivalently, $V \subset \cP$ is translation-invariant if $P \in V, Q \in \cP \implies Q(\partial) P \in  V$.  Since $T_h = \tau_{(1-\delta)^{-1} h,\delta^{-1}} \circ \tau_{0,\delta}$ (for any $\delta > 1$), any translation operator is a composition of dilation operators. Thus, $V$ is DTI if and only if $\tau_{x,\delta} V = V$ for all $(x,\delta) \in \R^n \times (0,\infty)$. 
\end{rem}

We now illustrate a connection between translation-invariant subspaces and ideals in $\cP_x$.

\begin{lemma}
\label{dti_trans_lem}
Let $(x,\delta) \in \R^n \times (0,\infty)$. Let $V^\perp$ be the orthogonal complement of a subspace $V \subset \cP$ with respect to the inner product $\langle \cdot,\cdot \rangle_{x,\delta}$. Then $V$ is translation-invariant if and only if $V^\perp$ is an $\odot_x$-ideal in $\cP_x$.
\end{lemma}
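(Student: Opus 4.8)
The statement relates two algebraic conditions on a subspace $V \subset \cP$ and its orthogonal complement $V^\perp$ taken with respect to a \emph{fixed} pair $(x,\delta)$. Since the dilation operator $\tau_{x,\delta}$ is a linear isomorphism of $\cP$ carrying $\langle \cdot,\cdot \rangle_{x,\delta}$ to $\langle \cdot,\cdot \rangle_{x,1} = \langle \cdot, \cdot \rangle_x$ (by definition of the scaled inner product), and since translation-invariance of $V$ is not affected by conjugating by $\tau_{x,\delta}$ in a way that matters here --- actually the cleanest route is to first reduce to $\delta = 1$. More precisely, $\tau_{x,\delta}(P \odot_x Q) = \delta^m \tau_{x,\delta}(P) \odot_x \tau_{x,\delta}(Q)$ by \eqref{eq_2115}, so $\tau_{x,\delta}$ maps $\odot_x$-ideals to $\odot_x$-ideals; and $\tau_{x,\delta}$ is a $\langle\cdot,\cdot\rangle_{x,\delta}$-to-$\langle\cdot,\cdot\rangle_x$ isometry, so it carries the $\langle\cdot,\cdot\rangle_{x,\delta}$-orthogonal complement of $V$ to the $\langle\cdot,\cdot\rangle_x$-orthogonal complement of $\tau_{x,\delta}V$. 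Meanwhile $V$ is translation-invariant iff $\tau_{x,\delta}V$ is (the remark after Definition \ref{dti_defn} shows translation-invariance is equivalent to $Q(\partial)P \in V$ for all $Q$, and one checks directly this property is preserved under $\tau_{x,\delta}$, since $\tau_{x,\delta}$ just rescales and shifts the variable). Hence without loss of generality $\delta = 1$, and by applying $T_{-x}$ (which is a translation, hence preserves all the relevant structures by \eqref{eq_2115} and the translation-invariance hypothesis) we may further assume $x = 0$.

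So it suffices to prove: for the standard inner product $\langle\cdot,\cdot\rangle = \langle\cdot,\cdot\rangle_{0,1}$ on $\cP$, a subspace $V$ is translation-invariant if and only if $V^\perp$ is an $\odot_0$-ideal. Here I will use the description of the inner product in the monomial basis: with $P = \sum_{|\alpha|\le m-1} a_\alpha z^\alpha$ and $Q = \sum b_\beta z^\beta$, one has $\langle P, Q\rangle = \sum_\alpha \alpha!\, a_\alpha b_\alpha$. The key identity, which I would establish as the first real step, is the adjointness relation
\[
\langle z^\gamma \odot_0 P,\ Q \rangle \ =\ \langle P,\ \partial^\gamma Q \rangle \qquad (P,Q \in \cP,\ |\gamma| \le m-1),
\]
or more symmetrically $\langle P \odot_0 Q, R\rangle = \langle Q, P^{\vee}(\partial) R\rangle$ where $P^\vee(\partial)$ is the constant-coefficient differential operator obtained from $P$. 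This is a direct computation on monomials: $z^\gamma \odot_0 z^\alpha = z^{\gamma+\alpha}$ if $|\gamma+\alpha| \le m-1$ and $0$ otherwise, while $\partial^\gamma z^\beta = \frac{\beta!}{(\beta-\gamma)!} z^{\beta-\gamma}$; matching coefficients, $\langle z^{\gamma+\alpha}, z^\beta\rangle = (\gamma+\alpha)!\,\delta_{\beta,\gamma+\alpha}$ and $\langle z^\alpha, \partial^\gamma z^\beta\rangle = \frac{\beta!}{(\beta-\gamma)!}\langle z^\alpha, z^{\beta-\gamma}\rangle = \frac{\beta!}{(\beta-\gamma)!}\alpha!\,\delta_{\alpha,\beta-\gamma}$, and these agree because when $\beta = \gamma+\alpha$ we get $(\gamma+\alpha)! = \frac{(\gamma+\alpha)!}{\alpha!}\cdot\alpha!$. (The truncation to degree $\le m-1$ is harmless: when $|\gamma+\alpha| > m-1$ the left side is $0$ and so is the right, since then $\partial^\gamma z^\beta$ for $|\beta|\le m-1$ never produces $z^\alpha$.)

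Granting the adjointness identity, the equivalence is formal. By the remark after Definition \ref{dti_defn}, $V$ is translation-invariant iff $Q(\partial)P \in V$ for all $P \in V$, $Q \in \cP$ --- equivalently iff $\partial^\gamma P \in V$ for all $P \in V$ and all $\gamma$, i.e. $V$ is closed under all $\partial^\gamma$. Now $V^\perp$ is an $\odot_0$-ideal iff $z^\gamma \odot_0 W \in V^\perp$ for all $W \in V^\perp$ and all monomials $z^\gamma$ (monomials span $\cP_0$ and $\odot_0$ is bilinear). By the adjointness identity, $z^\gamma \odot_0 W \in V^\perp$ $\iff$ $\langle z^\gamma \odot_0 W, P\rangle = 0$ for all $P \in V$ $\iff$ $\langle W, \partial^\gamma P\rangle = 0$ for all $P \in V$. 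If $V$ is translation-invariant then $\partial^\gamma P \in V$, so $\langle W, \partial^\gamma P\rangle = 0$ since $W \in V^\perp$; hence $V^\perp$ is an ideal. Conversely, if $V^\perp$ is an ideal, then for every $W \in V^\perp$ and $P \in V$ we have $\langle W, \partial^\gamma P\rangle = \langle z^\gamma \odot_0 W, P\rangle = 0$, which says $\partial^\gamma P \perp V^\perp$, i.e. $\partial^\gamma P \in (V^\perp)^\perp = V$; hence $V$ is translation-invariant. Finally I would spell out the reduction in the opening paragraph carefully, since that is where the only genuine bookkeeping lies --- in particular verifying that $\tau_{x,\delta}$ and $T_h$ preserve translation-invariance of subspaces and carry the $(x,\delta)$-orthogonal complement appropriately, using items (a), (b), (c) and \eqref{eq_2115} from Section \ref{sec_not}.

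\textbf{Main obstacle.} The only substantive point is the adjointness identity $\langle z^\gamma \odot_x P, Q\rangle_x = \langle P, \partial^\gamma Q\rangle_x$ and getting the combinatorial factors $\alpha!$ to cancel correctly; everything else is a formal orthogonality argument or a routine invariance reduction. I do not anticipate a real difficulty, but care is needed with the degree truncation and with making the $\delta\ne 1$, $x\ne 0$ reduction airtight rather than hand-waved.
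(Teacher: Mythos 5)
Your proof is correct and rests on essentially the same mechanism as the paper's: reduce to $x=0$, $\delta=1$ by invariance, then exploit the duality between multiplication in the jet ring and differentiation under the apolar inner product. The paper packages this duality as the identity $\langle Q,P\rangle = Q(\partial)P(0)$ together with $(R\odot_0 Q)(\partial)P = R(\partial)[Q(\partial)P]$, and in the forward direction goes via the translation operators $T_h$ to deduce $Q(\partial)P\equiv 0$; you instead state and verify the adjointness $\langle z^\gamma \odot_0 P, Q\rangle = \langle P, \partial^\gamma Q\rangle$ on monomials and use the remark's characterization of translation-invariance as closure under $\partial^\gamma$, which makes both implications symmetric and entirely formal. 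The two arguments are equivalent in substance; yours is marginally more uniform, the paper's avoids the monomial bookkeeping.
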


\begin{proof}

Translating, we may assume that $x = 0$. Rescaling preserves the property of $V$ being translation-invariant, and also of $V^{\perp}$ being an $\odot_x$-ideal, according to \eqref{eq_2115}. Hence we may assume that $\delta = 1$. Note the identity $\langle Q,P \rangle = Q(\partial) (P)(0)$ for any $P,Q \in \cP$. Note $\partial^\alpha$ annihilates $\cP$ for $|\alpha| \geq m$, and hence $R(\partial) [ Q(\partial) P] = (R \odot_0 Q)(\partial) P$ for any $P,Q,R \in \cP$. Suppose that $V$ is a translation-invariant subspace, and let $Q \in V^{\perp}$. Then, for any $h \in \R^n$ and $P \in V$, also $T_h P \in V$ and hence,
\[
0 = \langle Q, T_h (P) \rangle  = Q(\partial) \left[ T_h (P) \right] (0) = T_h( Q(\partial) P )(0) = Q(\partial) P (-h).
\] 
Consequently, $Q(\partial) P = 0$. Thus, for any $R \in \cP$, we have $(R \odot_0 Q) (\partial) P = R(\partial) \left[ Q(\partial) P  \right] = 0$. In particular, $\langle R \odot_0 Q, P \rangle = 0$ for any $P \in V$ and hence $R \odot_0 Q \in V^{\perp}$. This shows that $V^{\perp}$ is an $\odot_0$-ideal.

For the other direction, suppose that $V^{\perp}$ is an $\odot_0$-ideal. Let $P \in V$ and $R \in \cP$. Then for any $Q \in V^{\perp}$,
\[
0 = \langle R \odot_0 Q, P \rangle = Q(\partial)\left[ R(\partial)P \right](0) =  \langle Q, R(\partial) P \rangle. 
\]
This means that $R(\partial) P \in (V^{\perp})^{\perp} = V$. Hence $R(\partial) P \in V$ whenever $P \in V$ and $R \in \cP$, and consequently the subspace $V$ is translation-invariant.
\end{proof}

We say that two subspaces $V_1,V_2 \subset \cP$ are \emph{complementary} if $V_1 + V_2 = \cP$ and $V_1 \cap V_2 = \{0\}$.

\begin{lemma}\label{dti_comp_lem}
For any $\odot_0$-ideal $I$ in $\cP_0$,  there exists $V \in \dti$ that is complementary to $I$. 
\end{lemma}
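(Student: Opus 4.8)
The plan is to build $V$ as a $\langle\cdot,\cdot\rangle_{0,1}$-orthogonal complement of $I$ \emph{inside a translation-invariant subspace}, exploiting Lemma~\ref{dti_trans_lem} together with the fact that orthogonality with respect to $\langle\cdot,\cdot\rangle_{0,1}$ behaves well under differentiation. First I would set $W := I^{\perp}$, the orthogonal complement of $I$ with respect to $\langle\cdot,\cdot\rangle = \langle\cdot,\cdot\rangle_{0,1}$. Since $I$ is an $\odot_0$-ideal, Lemma~\ref{dti_trans_lem} (with $x=0$, $\delta=1$) tells us that $W$ is translation-invariant; and $W$ is automatically a vector-space complement of $I$ because it is an orthogonal complement. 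The trouble is that $W$ need not be dilation-invariant: the inner product $\langle\cdot,\cdot\rangle_{0,1}$ is not preserved by $\tau_{0,\delta}$ (the dilations rescale each homogeneous component by a different power). So $W$ witnesses translation-invariance and complementarity but generally fails the full DTI condition.

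To repair this I would pass to homogeneous components. Decompose $\cP = \bigoplus_{j=0}^{m-1} \cP^{(j)}$, where $\cP^{(j)}$ is the space of polynomials homogeneous of degree $j$; note each $\tau_{0,\delta}$ acts on $\cP^{(j)}$ by the scalar $\delta^{j-m}$, so a subspace is dilation-invariant at $0$ if and only if it is graded, i.e. a direct sum of subspaces of the $\cP^{(j)}$. The point is that $I$ itself is \emph{not} assumed graded, but I would observe that the translation operators $T_h$ together generate, in a suitable sense, the ``homogeneous pieces'': concretely, an $\odot_0$-ideal is graded-reducible in the following way. Consider the one-parameter group and use that $\tau_{0,\delta}I$ is again an $\odot_0$-ideal complementary to... — rather, the clean route is: apply Lemma~\ref{dti_trans_lem} to get translation-invariance of $W=I^\perp$, then separately observe that for a \emph{graded} subspace being translation-invariant is equivalent to being an $\odot_0$-ideal's orthocomplement and is automatically dilation-invariant. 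So the real task is to replace $I$ by (or find alongside $I$) a graded object. Here I would use the associated graded ideal: let $\mathrm{gr}(I) := \bigoplus_j \big( (I \cap \cP^{\le j}) + \cP^{\le j-1}\big)/\cP^{\le j-1}$, identified with a graded subspace $I'$ of $\cP$ via the splitting into homogeneous components. One checks $\dim I' = \dim I$ and $I'$ is still an $\odot_0$-ideal (leading forms of an ideal form an ideal). Then $V := (I')^{\perp}$ is graded (being the orthocomplement of a graded subspace in the graded inner product $\langle\cdot,\cdot\rangle_{0,1}$, each $\cP^{(j)}$ being orthogonal to $\cP^{(j')}$ for $j\ne j'$), hence dilation-invariant at $0$; and it is translation-invariant by Lemma~\ref{dti_trans_lem}; hence $V \in \dti$. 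Finally, $\dim V = D - \dim I' = D - \dim I$, and one must verify $V \cap I = \{0\}$ and $V + I = \cP$, i.e. that $V$ is complementary to the \emph{original} $I$, not just to $I'$.

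That last verification is the main obstacle, and it is the only nontrivial point. Dimension count reduces it to showing $V \cap I = \{0\}$. Suppose $0 \ne P \in V \cap I$; write $P = P_j + (\text{lower order})$ with $P_j$ its leading (top-degree) homogeneous form, $P_j \ne 0$. Since $P \in I$, the leading form $P_j$ lies in $I'$ by construction of the associated graded ideal. On the other hand $P \in V = (I')^{\perp}$, and since $V$ is graded, each homogeneous component of $P$ lies in $V$; in particular $P_j \in V = (I')^{\perp}$. Thus $P_j \in I' \cap (I')^{\perp} = \{0\}$, a contradiction. Hence $V \cap I = \{0\}$, and with the dimension count $V + I = \cP$, so $V$ is complementary to $I$, completing the proof. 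The steps I expect to need care with are: (i) checking that the leading-form construction $I \mapsto I'$ genuinely produces an $\odot_0$-ideal of the same dimension (this is the standard ``initial ideal'' fact, but phrased for jets one should confirm $\odot_0$ respects leading forms in degrees $\le m-1$, which it does since $P \odot_0 Q$ has leading form equal to the product of leading forms truncated to degree $\le m-1$); and (ii) confirming that a graded subspace is exactly a dilation-invariant-at-$0$ subspace, which combined with translation-invariance gives the full DTI property via the remark that translations are compositions of dilations.
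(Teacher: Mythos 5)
Your proof is correct and takes a genuinely different route from the paper's. Both arguments share a skeleton: replace $I$ by a \emph{graded} (hence dilation-invariant at $0$) $\odot_0$-ideal $J$, take $V := J^\perp$ --- which is dilation-invariant at $0$ because orthocomplements of graded subspaces are graded, translation-invariant by Lemma~\ref{dti_trans_lem}, and hence in $\dti$ via $\tau_{x,\delta}=T_x\circ\tau_{0,\delta}\circ T_{-x}$ --- and then show $V$ is complementary to $I$. The paper builds $J$ as the dynamical limit $I_*=\lim_{\delta\to 0}\tau_{0,\delta}(I)$, which extracts the \emph{lowest}-degree leading forms; it proves $I_*$ is an ideal by invoking closedness of the set of ideals in the Grassmannian, and recovers complementarity of $V$ with $I$ by observing that complementarity is an open condition (hence holds for $\tau_{0,\delta}(I)$ with small $\delta$) and pulling back by $\tau_{0,\delta^{-1}}$ using the DTI identity $\tau_{0,\delta^{-1}}V=V$. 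You instead construct $J=I'$ algebraically as the \emph{top}-degree associated graded, check it is an ideal directly, and prove $V\cap I=\{0\}$ by the direct leading-form argument, finishing with a dimension count. Your version trades the paper's soft topological compactness for explicit algebra, and your complementarity step is arguably cleaner than the paper's openness argument.

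One caution. Your stated justification that $\mathrm{gr}(I)$ is an $\odot_0$-ideal --- ``$P\odot_0 Q$ has leading form equal to the product of leading forms truncated to degree $\le m-1$'' --- is not literally true. With $n=1$ and $m=3$, take $P(z)=1+z^2$ and $Q(z)=z$: the leading forms are $z^2$ and $z$, whose truncated product is $0$, yet $P\odot_0 Q=z$ has leading form $z\ne 0$. What is true, and all you actually need, is the following dichotomy: if $R_j\in\mathrm{gr}(I)_j$ is the degree-$j$ component of some $P\in I$ of degree at most $j$, and $Q_k$ is homogeneous of degree $k$, then either $j+k\le m-1$, in which case $R_j\odot_0 Q_k = R_j Q_k$ equals the degree-$(j+k)$ component of $P\odot_0 Q_k\in I$ (an element of $I$ of degree at most $j+k$), or $j+k>m-1$, in which case $R_j\odot_0 Q_k=0$; either way $R_j\odot_0 Q_k\in\mathrm{gr}(I)$. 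With this repair your argument is complete.
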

\begin{proof}
Set $I_* = \lim_{\delta \rightarrow 0} \tau_{0,\delta} (I)$ (where the Grassmanian is endowed with the usual topology). Let us first show that this limit exists: Consider the canonical projection $\pi_k : \cP_0 \rightarrow \cP_0^k$ onto the subspace of $k$-homogeneous polynomials $\cP_0^k := \spn \{ z^\alpha : |\alpha| = k\}$, and denote the subspace of $(\geq k)$-homogeneous polynomials $\cP_0^{\geq k} := \spn \{ z^\alpha : |\alpha| \geq k\}$. By Gaussian elimination we can pick a basis $\cB_1 := \{ P^k_j \}^{0 \leq k \leq m-1}_{1 \leq j \leq N_k}$ for $I$ in the \emph{block form}: $P^k_j \in \cP^{\geq k}_0$, and $\cB_0 := \{ \pi_k P^k_j \}^{0 \leq k \leq m-1}_{1 \leq j \leq N_k}$ is linearly independent in $\cP_0$. The family $\cB_\delta := \{ \delta^{m-k} \tau_{0,\delta} (P_j^k) \}_{k,j}$ converges elementwise as $\delta \rightarrow 0$ to $\cB_0$. Since $\cB_\delta$ is a basis for  $\tau_{0,\delta}(I)$, and $\cB_0$ is a basis for $I_* := \spn (\cB_0)$, we learn that $\tau_{0,\delta}(I)$ converges to $I_*$, as desired. 

The ideals form a closed subset of the Grassmanian, thus $I_*$ is an ideal in the ring $\cP_0$. Let $V$ be the orthogonal complement of $I_*$ with respect to the standard inner product on $\cP_0$. Observe that $I_*$ is dilation-invariant at $x=0$, i.e., $\tau_{0,\delta} I_* = I_*$ for all $\delta > 0$. Equivalently, $I_*$ is a direct sum of homogeneous subspaces of $\cP_0$, i.e., $I_* = I^0 + \cdots + I^{m-1}$, with $I^k \subset \cP_0^k$. But then $V$ is also a direct sum of homogeneous subspaces of $\cP_0$, and so $V$ is dilation-invariant at $x=0$. From Lemma \ref{dti_trans_lem}, we also know that $V$ is translation-invariant.  Thus, $V \in \dti$. The subspaces $I_*$ and $V$ are complementary and this property is open in $\cG \times \cG$. By definition of $I_*$ as a limit, $\tau_{0,\delta}(I)$ and $V$ are complementary for some $\delta > 0$. By an application of the isomorphism of vector spaces $\tau_{0,\delta^{-1}}$, we learn that $I$ and $\tau_{0,\delta^{-1}} V$ are complementary. To finish the proof, recall that $V \in \dti$, and hence $\tau_{0,\delta^{-1}} V=V$.
\end{proof}

Our next result says that every Whitney convex set is transverse to a DTI subspace.

\begin{lemma}\label{label_lem}
Given $A \in [1,\infty)$, there exists a constant $R_0 = R_0(A,m,n)$ so that the following holds. Let $\Omega$ be a closed, symmetric, convex subset of $\cP$. If $\Omega$ is Whitney convex at $x \in \R^n$ with $w_x(\Omega) \leq A$, and $\delta > 0$, then there exists $V \in \dti$ such that $\Omega$ is $(x,\delta,R_0)$-transverse to $V$.
\end{lemma}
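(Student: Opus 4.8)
The plan is to normalize, establish a non-uniform version of the statement, and then upgrade it to the uniform bound $R_0=R_0(A,m,n)$ by compactness. First I would reduce to the case $x=0$, $\delta=1$: replacing $\Omega$ by $\tau_{0,\delta}(T_{-x}\Omega)$ and applying the first two bullet points of Lemma \ref{lem00}, together with the invariances $w_x(\Omega)=w_x(\tau_{x,\delta}\Omega)=w_{x+h}(T_h\Omega)$ and the translation/dilation invariance of the class $\dti$, it suffices to produce $R_0$ so that every closed symmetric convex $\Omega\subseteq\cP$ with $w_0(\Omega)\le A$ is $(0,1,R_0)$-transverse to some $V\in\dti$.

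Next I would record a non-uniform version: for a \emph{single} such $\Omega$ there exist $V\in\dti$ and \emph{some} finite $R=R(\Omega)$ with $\Omega$ being $(0,1,R)$-transverse to $V$. Indeed, by Lemma \ref{span-ideal_lem} the span $I:=\spn(\Omega)$ is an $\odot_0$-ideal, so Lemma \ref{dti_comp_lem} supplies $V\in\dti$ complementary to $I$. Then $\Omega\cap V\subseteq I\cap V=\{0\}$, so condition (2) of Definition \ref{trans_P_defn} holds with any constant; and since $\spn(\Omega\cap\cB)=I$ and $I+V=\cP$, the set $(\Omega\cap\cB)/V$ is a compact, symmetric, convex subset of $\cP/V$ that spans it, hence contains a neighbourhood of the origin, which forces $\cB/V\subseteq R\cdot(\Omega\cap\cB)/V$ for some finite $R$. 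The subtlety, and the reason this is not yet the lemma, is that this $R$ is not controlled by $A$ alone (for instance when $\Omega$ is a very thin slab inside $I$, the complementary DTI subspace is a poor choice).

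To obtain uniformity I would argue by contradiction and compactness. Suppose there were a sequence $\Omega_k$ of closed symmetric convex sets with $w_0(\Omega_k)\le A$, none of which is $(0,1,k)$-transverse to any $V\in\dti$. Applying the Blaschke selection theorem to $\Omega_k\cap 2^j\cB$ for each $j$ and diagonalizing, I would pass to a subsequence along which $\Omega_k\cap 2^j\cB$ converges in Hausdorff distance for every $j$, and assemble the limits into a closed symmetric convex set $\Omega_\infty$ with $\Omega_\infty\cap 2^j\cB=\lim_k\Omega_k\cap 2^j\cB$. Passing to the limit in the defining inclusions of Whitney convexity, $(\Omega_k\cap\cB_{0,\delta})\odot_0\cB_{0,\delta}\subseteq R\delta^m\Omega_k$ (valid for all $R>A$, all $\delta>0$), using the continuity of $\odot_0$ and the convergence on every bounded set, gives $w_0(\Omega_\infty)\le A$. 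The non-uniform version then yields $V\in\dti$ and a \emph{finite} $R_\infty$ with $\Omega_\infty$ being $(0,1,R_\infty)$-transverse to $V$. Fixing some $\tilde R\ge 4R_\infty$ of the form $2^j$ and using $\Omega_k\cap\tilde R\cB\to\Omega_\infty\cap\tilde R\cB$, the second bullet of Lemma \ref{comp_lem2} shows that $\Omega_k$ is $(0,1,4R_\infty)$-transverse to $V$ for all large $k$; since $R$-transversality only weakens as $R$ grows, this means $\Omega_k$ is $(0,1,k)$-transverse to $V\in\dti$ as soon as $k\ge 4R_\infty$, contradicting the choice of the $\Omega_k$. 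Hence $R_0=R_0(A,m,n)$ exists.

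The hard part will be the limiting step for the \emph{unbounded} sets $\Omega_k$: one needs a limit $\Omega_\infty$ in a topology that is simultaneously compact, preserves closedness, convexity and symmetry, and — crucially — preserves the genuinely multi-scale estimate defining $w_0(\cdot)\le A$. Tracking the $\Omega_k$ at all dyadic scales (equivalently, using local Hausdorff convergence) is what makes this possible, but some care is required that intersections with fixed balls behave continuously under the limit and that the normalizations in the Whitney-convexity inequality survive passage to the limit. By contrast, the algebraic input (Lemmas \ref{span-ideal_lem} and \ref{dti_comp_lem}) and the stability input (Lemma \ref{comp_lem2}) are used essentially off the shelf.
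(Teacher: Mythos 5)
Your proposal is correct and follows essentially the same route as the paper: reduce to $x=0$, $\delta=1$; obtain a finite (non-uniform) transversality constant from Lemmas \ref{span-ideal_lem} and \ref{dti_comp_lem}; and upgrade to a uniform bound via compactness of the space of closed symmetric convex sets in the local Hausdorff topology (Blaschke) together with stability of transversality (Lemma \ref{comp_lem2}). The only cosmetic difference is that the paper packages the compactness step as ``an upper semicontinuous function that is finite on a compact set is bounded'' (via the map $\phi(\Omega)=\inf_{V\in\dti}\psi(\Omega,V)$), whereas you unfold the same reasoning as a sequential proof by contradiction.
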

\begin{proof}

By the second bullet point in Lemma \ref{lem00}, $\Omega$ is $(x,\delta,R)$-transverse to $V$ if and only if $\tau_{x,\delta} \Omega$ is $(x,1,R)$-transverse to $\tau_{x,\delta}V$. Thus, by the remark following Definition \ref{wc_def}, we may rescale and assume that $\delta = 1$. Similarly, by translating we may assume that $x=0$.

Let $\cS$ be the set of closed, symmetric, convex subsets of $\cP$. We endow $\cS$ with the topology of local Hausdorff convergence, i.e., $\Omega_j \rightarrow \Omega$ iff $\lim_{j \rightarrow \infty} d_H(\Omega_j \cap R \cB,\Omega \cap R \cB) = 0$ for all $R > 0$ -- here, $\cB \subset \cP$ is the unit ball with respect to the norm $|\cdot| = |\cdot|_{0,1}$ on $\cP$, and $d_H$ is the Hausdorff metric with respect to this norm. As a consequence of the Blaschke selection theorem, thus endowed, $\cS$ is a compact space. Write $\cG$ to denote the Grassmanian of all subspaces of $\cP$, and $\cG_k \subset \cG$ the Grassmanian of all $k$-dimensional subspaces. We may identify $\cG$ as a compact subspace of $\cS$.

For any $(x,\delta) \in \R^n \times (0,\infty)$, the isomorphism $\tau_{x,\delta} : \cP \rightarrow \cP$ induces a continuous mapping on the Grassmanian $\tau_{x,\delta} : \cG \rightarrow \cG$.  Thus, $\dti = \{ V \in \cG : \tau_{x,\delta} V = V \;\; \forall (x,\delta) \in \R^n \times (0,\infty) \}$ is a closed subset of $\cG$, and hence $\dti$ is compact.

The conclusion of the lemma is equivalent to the existence of a constant $R_0=R_0(A,m,n)$ so that $\phi(\Omega) \leq R_0$ for all $\Omega \in wc_A$, where
\[
\begin{aligned}
& wc_A := \{ \Omega \in \cS : \Omega \mbox{ is Whitney convex at } 0 \mbox{ with } w_0(\Omega) \leq A \}, \\
& \phi: wc_A \rightarrow [0,\infty], \;\; \phi(\Omega) := \inf \{ \psi(\Omega,V) : V \in \dti \}, \;\; \mbox{with} \\
& \psi : wc_A \times \dti \rightarrow [0,\infty], \;\; \mbox{where} \\
& \qquad \psi(\Omega,V) := \inf \{ R :  \Omega \cap V \subset R \cdot \cB, \;\; \cB/V \subset  R  \cdot (\Omega \cap \cB)/V \}.
\end{aligned}
\]
If $\Omega_n \rightarrow \Omega$, $\Omega_n \in wc_A$, $\delta > 0$, and $A^* > A$, then 
\[
(\Omega \cap \cB_{0,\delta}) \odot_0 \cB_{0,\delta} = \lim_{n \rightarrow \infty} (\Omega_n \cap \cB_{0,\delta}) \odot_0 \cB_{0,\delta} \subset \lim_{n \rightarrow \infty} A^* \delta^m \Omega_n = A^* \delta^m \Omega,
\]
where we used the continuity of $\odot_0$ on $\cS \times \cS$. So $wc_A$ is closed, and hence compact. We claim that $\psi$ is upper semicontinuous (usc). Indeed, $\psi = \inf_{R > 0} \psi_R$, with $\psi_R = R 1_{E_R} + \infty 1_{E_R^c}$ and 
\[
E_R = \{(\Omega,V) \in \cS \times \dti : \exists R' < R, \; \Omega \cap V \subset R' \cdot \cB \mbox{ and } \cB/V \subset R' \cdot (\Omega \cap \cB) / V \}.
\]
As $E_R$ is open, $\psi_R$ is usc. Hence the same is true of $\psi$, and also of $\phi$.

Since $\phi$ is usc and $wc_A$ is compact, it suffices to show that $\phi(\Omega) < \infty$ for all $\Omega \in wc_A$. Since $\Omega$ is Whitney convex at $0$, $I = \spn (\Omega)$ is an ideal in $\cP_0$ (see Lemma \ref{span-ideal_lem}). By Lemma \ref{dti_comp_lem} there exists a subspace $V \in \dti$ which is complementary to $I$, i.e., $V \cap I = \{0\}$ and $V + I  = \cP$. Note that $\spn (\Omega + V) = I + V = \cP$, and so by convexity, $\Omega + V$ contains a ball $\epsilon \cB$ for some $\epsilon > 0$. If $\epsilon \cB \subset \Omega + V$, it follows that $\epsilon \cB / V \subset \Omega / V$. Thus,
\[
\epsilon \cB / V \subset \bigcup_{R > 0} (\Omega \cap R \cB)/V.
\]
By compactness, there  exists an  $R > 0$ with $\frac{\epsilon}{2} \cB/V \subset (\Omega \cap R \cB)/V \subset R ( \Omega \cap \cB)/V$. Thus, $ \cB/V \subset \frac{2R}{\epsilon}  ( \Omega \cap \cB)/V$. Combined with $V \cap \Omega \subset V \cap I = \{0\}$, this implies that $\phi(\Omega) \leq \frac{2R}{\epsilon}$.
\end{proof}

For any $x \in \R^n$, the set $\sigma(x) = \sigma(x,E)$ is Whitney convex at $x$ with $w_x(\sigma(x)) \leq C_0$ (see Lemma \ref{sigma_wc_lem}). Let $R_0$ be the constant from Lemma \ref{label_lem} with $A = C_0$. Then
\begin{equation}
\label{label1}
\left\{
\begin{aligned}
&\mbox{for any finite set} \; E \subset \R^n,\; \mbox{for any } (x,\delta) \in \R^n \times (0,\infty),\\
&\mbox{there exists } V \in \dti \mbox{ such that } \sigma(x) \mbox{ is } (x,\delta,R_0) \mbox{-transverse to } V.
\end{aligned}
\right.
\end{equation}

\textbf{Constants:} Recall the constant  $c_1$ is defined in Lemma \ref{trans_lem}. We specify constants $R_\lbl \ll R_\med \ll R_\big \ll R_\huge$, $C_*$, and $C_{**}$, defined as follows:
\begin{equation}
\label{fix_c}
\left\{
\begin{aligned}
&R_\lbl := 8 R_0,  \; R_\med := 256D R_\lbl , \; R_\big := 10^{m} R_\med, \;  R_\huge := 2^{m+3} R_\big  \\
& C_* := 20 c_1^{-1} R_\big, \; C_{**} =1 + 2^m C_T (1 +  R_\lbl (5C_*)^m).
\end{aligned}
\right.
\end{equation}

\begin{lemma}\label{main-labeling-lemma}
Let $B$ be a closed ball in $\R^n$. There exists $V \in \dti$ such that $\sigma(z)$ is $(z, C_* \diam(B),  R_{\lbl})$-transverse to $V$ for all $z \in 100 B$.
\end{lemma}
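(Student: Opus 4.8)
The plan is to obtain a single DTI subspace $V$ by invoking \eqref{label1} at the \emph{center} of $B$ and at the scale $C_*\diam(B)$, and then to transfer the transversality of the sets $\sigma(\cdot)$ to every point of $100B$ using the quasicontinuity estimate of Lemma \ref{trans_lem}. The constant $C_*$ in \eqref{fix_c} has been chosen precisely so that $100B$ is negligibly small on the scale $C_*\diam(B)$, which is exactly what makes Lemma \ref{trans_lem} applicable with a uniform loss.

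First I would let $x_0$ be the center of $B$, put $\delta_0 := C_*\diam(B)$, and apply \eqref{label1} with basepoint $x_0$ and scale $\delta_0$ (here $E$ is the fixed finite set). This produces $V \in \dti$ such that $\sigma(x_0)$ is $(x_0,\delta_0,R_0)$-transverse to $V$. I claim this same $V$ works for every $z \in 100B$. Fix such a $z$. Since $100B$ is the ball with center $x_0$ and radius $50\diam(B)$, we have $|x_0-z| \leq 50\diam(B)$. On the other hand, \eqref{fix_c} together with $R_\big \geq R_\med \geq R_\lbl = 8R_0$ gives $C_* = 20c_1^{-1}R_\big \geq 160 c_1^{-1}R_0$, hence
\[
c_1\,\frac{\delta_0}{R_0} \;=\; \frac{c_1 C_*}{R_0}\,\diam(B) \;\geq\; 160\,\diam(B) \;\geq\; |x_0-z|.
\]
Now Lemma \ref{trans_lem}, applied with $x = x_0$, $y = z$, $\delta = \delta_0$, $R = R_0$, shows that $\sigma(z)$ is $(z,\delta_0,8R_0)$-transverse to $V$. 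Since $8R_0 = R_\lbl$ by \eqref{fix_c} and $\delta_0 = C_*\diam(B)$, this is exactly the assertion that $\sigma(z)$ is $(z, C_*\diam(B), R_\lbl)$-transverse to $V$; as $z \in 100B$ was arbitrary, the lemma follows.

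I do not expect a genuine obstacle: the statement is essentially a repackaging of \eqref{label1} (existence, at one scale and basepoint, of a transverse DTI subspace for $\sigma$) and the stability Lemma \ref{trans_lem} (transversality of $\sigma(\cdot)$ survives small moves of the basepoint). The only thing requiring attention is verifying the hypothesis $|x_0-z| \leq c_1\delta_0/R_0$ needed to invoke Lemma \ref{trans_lem}, and this is guaranteed by the definition of $C_*$; one simply needs to remember that the radius of $100B$ about its center is $50\diam(B)$ and that the constants in \eqref{fix_c} satisfy $R_\big \geq R_\lbl = 8R_0$.
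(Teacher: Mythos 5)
Your proof is correct and follows the paper's own argument step for step: apply \eqref{label1} at the center of $B$ with scale $C_*\diam(B)$ to obtain a DTI subspace $V$ and $R_0$-transversality for $\sigma(x_0)$, then use Lemma \ref{trans_lem} together with the definition of $C_*$ in \eqref{fix_c} to propagate $R_\lbl = 8R_0$-transversality to every $z\in 100B$. The only cosmetic difference is that you use the sharper bound $|x_0-z|\le 50\diam(B)$ while the paper uses $\le 100\diam(B)$; both are comfortably within the $c_1 C_*\diam(B)/R_0 \ge 160\diam(B)$ margin.
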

\begin{proof}
Let $x_0 \in \R^n$ be the center of $B$. We apply \eqref{label1} with $x=x_0$ and $\delta = C_*  \diam(B)$. Thus, $\sigma(x_0)$ is $(x_0, C_* \diam(B), R_0)$-transverse to some $V \in \dti$. Let $z \in 100B$ be arbitrary. Then $|z-x_0| \leq 100 \diam(B) \leq c_1 \frac{C_* \diam(B)}{R_0}$ (see \eqref{fix_c}). By Lemma \ref{trans_lem}, we conclude that $\sigma(z)$ is $(z, C_* \diam(B), 8R_0)$-transverse to $V$.

\end{proof}

\section{Complexity} \label{comp_sec}

The left and right endpoints of an interval $I \subset \R$ are denoted by $l(I)$ and $r(I)$, respectively. An interval $J$ is \emph{to the left} of an interval $I$, written $J < I$, if either $r(J) < l(I)$ or $r(J) = l(I)$ and $l(J) < l(I)$. Let $X$ be a finite-dimensional Hilbert space with inner product $\langle \cdot, \cdot \rangle_X$, set $d := \dim X < \infty$, and denote the norm and unit ball of $X$ by $| \cdot |_X = \sqrt{ \langle \cdot,\cdot \rangle}_X$ and $\cB = \{ x \in X : |x|_X \leq 1 \}$. Let $\Psi : \R^D \rightarrow X$ be a coordinate transformation of the form $\Psi(v) = \sum_j v_j e_j$ for an orthonormal basis $\{ e_j \}_{1 \leq j \leq d}$ of $X$. Fix $\vec{m} = (m_1,\cdots,m_d) \in \Z_{\geq 0}^d$ and a $1$-parameter family of maps $T_\delta : X \rightarrow X$ ($\delta > 0$) of the form $T_\delta = \Psi \widetilde{T}_\delta \Psi^{-1}$, where the transformation $\widetilde{T}_\delta : \R^d \rightarrow \R^d$ is represented in standard Euclidean coordinates by a diagonal matrix $D_\delta = \mbox{diag}(\delta^{-m_1},\cdots,\delta^{-m_d})$. 

\begin{definition}\label{comp_defn}
Given a closed, symmetric, convex set $\Omega \subset X$, the complexity of $\Omega$ relative to the dynamical system $\cX = (X, T_\delta)_{\delta>0}$ at scale $\delta_0 > 0$ with parameter $R \geq 1$-- written  $\cC_{\cX,\delta_0,R}(\Omega)$ -- is the largest integer $K \geq 1$ such that there exist intervals $I_1 > I_2 > \cdots > I_K$ in $(0,\delta_0]$ and subspaces $V_1,V_2\cdots,V_K \subset X$, such that $T_{r(I_k)} (\Omega)$ is $R$-transverse to $V_k$, but $T_{l(I_k)}(\Omega)$ is not $256dR$-transverse to $V_k$  for all $k=1,\cdots,K$. If no such $K$ exists, let $\cC_{\cX,\delta_0,R}(\Omega) := 0$.
\end{definition}

\begin{proposition}\label{bdd_comp_prop}
Given $R \geq 1$ and $\vec{m} \in \Z^d_{\geq 0}$, there exists a constant $K_0 = K_0(d, \vec{m},R)$ such that $\cC_{\cX,\delta_0,R}(\Omega) \leq K_0$ for all closed, symmetric, convex sets $\Omega \subset X$ and all $\delta_0>0$.
\end{proposition}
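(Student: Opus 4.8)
My plan is to reduce the statement to the case where $\Omega$ is an ellipsoid and then to quote a uniform finiteness theorem for semialgebraic families --- the ``logic-type compactness'' mentioned in the introduction. \emph{Step 1 (reduction to ellipsoids).} Using John's theorem (for a general, possibly unbounded, closed symmetric convex set one exhausts $\Omega$ by $\Omega\cap r\cB$ and passes to a limit of John ellipsoids, permitting degenerate limits), choose an ellipsoid $\mathcal{E}$ with $\mathcal{E}\subset\Omega\subset\sqrt{d}\,\mathcal{E}$. From the definition of $R$-transversality one checks directly that $\Omega$ being $R$-transverse to $V$ forces $\mathcal{E}$ to be $\sqrt{d}\,R$-transverse to $V$, and conversely. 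Hence intervals $I_1>\dots>I_K$ and subspaces $V_k$ witnessing $\mathcal{C}_{\mathcal X,\delta_0,R}(\Omega)\ge K$ also witness, for the constant pair $(\sqrt{d}\,R,\,256\sqrt{d}\,R)$, that $T_{r(I_k)}\mathcal{E}$ is $\sqrt{d}\,R$-transverse to $V_k$ while $T_{l(I_k)}\mathcal{E}$ is not $256\sqrt{d}\,R$-transverse to $V_k$. It therefore suffices to prove: for any $1\le R_1\le R_2$ there is $N=N(d,\vec{m},R_1,R_2)$ such that no ellipsoid $\mathcal{E}\subset X$ and no $\delta_0>0$ admit intervals $I_1>\dots>I_{N+1}$ in $(0,\delta_0]$ and subspaces $V_1,\dots,V_{N+1}$ with $T_{r(I_k)}\mathcal{E}$ being $R_1$-transverse to $V_k$ and $T_{l(I_k)}\mathcal{E}$ not being $R_2$-transverse to $V_k$.

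\emph{Step 2 (semialgebraic encoding).} Working in coordinates in which $T_\delta=\operatorname{diag}(\delta^{-m_1},\dots,\delta^{-m_d})$, an ellipsoid is $\mathcal{E}_A=\{v:\langle Av,v\rangle\le1\}$ with $A$ positive semidefinite, and $T_\delta\mathcal{E}_A=\mathcal{E}_{A_\delta}$ where $(A_\delta)_{ij}=\delta^{m_i+m_j}A_{ij}$ --- polynomial in $\delta$ with fixed nonnegative integer exponents. Encoding a subspace by its orthogonal projection $\Pi$ (a point of the real algebraic variety of all projections, taken over all ranks), the relation ``$\mathcal{E}_{A_\delta}$ is $R$-transverse to $\operatorname{Im}\Pi$'' unwinds, via Definition \ref{trans_P_defn}, into finitely many inclusions between explicitly polynomially-defined convex bodies; by Tarski--Seidenberg it is a semialgebraic condition on $(A,\delta,\Pi)$ whose defining formula has degree and number of polynomials bounded in terms of $d$ and $\vec{m}$ alone, since $A$ enters only as parameters. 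Consequently
\[
\mathcal{Z}_A:=\bigl\{(\alpha,\beta,\Pi):\ 0<\beta\le\alpha,\ \mathcal{E}_{A_\alpha}\text{ is }R_1\text{-transverse to }\operatorname{Im}\Pi,\ \mathcal{E}_{A_\beta}\text{ is not }R_2\text{-transverse to }\operatorname{Im}\Pi\bigr\}
\]
is semialgebraic with complexity bounded uniformly in $A$, and the quantity to be bounded in Step 1 equals the maximal length of a ``staircase chain'' in $\mathcal{Z}_A$: a sequence $(\alpha_k,\beta_k,\Pi_k)_{k=1}^{K}$ of points of $\mathcal{Z}_A$ with $\delta_0\ge\alpha_1\ge\beta_1\ge\alpha_2\ge\dots\ge\beta_K>0$.

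\emph{Step 3 (bounding the chain length).} For a fixed $\Pi$ this is immediate: the slice $\mathcal{Z}_{A,\Pi}=(S_1\times S_2)\cap\{\alpha\ge\beta\}$, where $S_1=\{\delta:\mathcal{E}_{A_\delta}\text{ is }R_1\text{-transverse to }\operatorname{Im}\Pi\}$ and $S_2=\{\delta:\mathcal{E}_{A_\delta}\text{ is not }R_2\text{-transverse to }\operatorname{Im}\Pi\}$ are disjoint (the implication from $R_1$- to $R_2$-transversality) finite unions of intervals, the number of components being bounded uniformly by standard coefficient-free bounds on connected components of semialgebraic sets (Oleinik--Petrovsky--Thom--Milnor); and along a staircase chain the $\alpha_k$ must lie in pairwise distinct, left-to-right decreasing components of $S_1$ --- if $\alpha_k,\alpha_{k+1}$ lay in one component then $\beta_k\in[\alpha_{k+1},\alpha_k]$ would lie in $S_1\cap S_2=\emptyset$ --- so the chain has length at most that uniform number. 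For varying subspaces I would run the same idea after a cylindrical algebraic decomposition of $\mathcal{Z}_A$ adapted to the successive projections onto $(\alpha,\beta,\Pi)$, $(\alpha,\beta)$ and then $\alpha$; by uniform finiteness it has a number of cells $N$ independent of $A$, and the combinatorial type of the slice $\mathcal{Z}_{A,\Pi}$ is constant over each $\Pi$-cell. Two monotonicity facts then make the count go through: (a) a scale-invariant ``forbidden corridor'' $\beta<\rho\,\alpha$ on $\mathcal{Z}_A$ for some uniform $\rho<1$, coming from the implication from $R_1$- to $R_2$-transversality together with the symmetry $(\alpha,\beta)\mapsto(s\alpha,s\beta)$, $A\mapsto A_s$ of the family (a consequence of $T_\delta T_{\delta'}=T_{\delta\delta'}$); and (b) boundedness of $\alpha$ from below and of $\beta$ from above on $\mathcal{Z}_A$, obtained by inspecting $T_\delta\mathcal{E}_A$ as $\delta\to0$ and $\delta\to\infty$. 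With (a) and (b) one checks that each cell carries only a bounded number of staircase-chain points, giving $K\le C(d,\vec{m},R_1,R_2)$; setting $K_0$ equal to this constant, and tracing back the $\sqrt{d}$ from Step 1, proves the Proposition.

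\emph{The main obstacle} is Step 3. The parameter space of ellipsoids $A$ is noncompact even after quotienting by the rescaling symmetry, so ordinary topological compactness cannot produce a bound independent of $\mathcal{E}$ --- one must genuinely invoke the uniform-finiteness theorem for semialgebraic families. The delicate point is then the bookkeeping needed to handle the possibility that the $V_k$ are all distinct: one has to choose the cylindrical decomposition compatibly with the scale coordinates and with the diagonal $\{\alpha=\beta\}$ so that the forbidden-corridor estimate can be applied cell by cell. The auxiliary facts (a) and (b) are routine, but must be derived directly from the definition of transversality.
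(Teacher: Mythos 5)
Your overall plan---reduce to ellipsoids, encode transversality semialgebraically, and bound the number of scale-transitions via uniform finiteness for semialgebraic families---is the same as the paper's, and Steps 1 and 2 are essentially correct (modulo a remark below). But Step 3 contains a genuine gap precisely where you flag ``the main obstacle,'' and the paper resolves it by a different and much cleaner device than the cylindrical decomposition you sketch.

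The difficulty with your Step 3 is that the witnessing subspaces $V_1,\dots,V_K$ can all be distinct, so the single-slice argument (bound the number of connected components of $\{\delta:\ T_\delta\cE\text{ is }R_1\text{-transverse to }V\}$ for a \emph{fixed} $V$) does not directly apply. You propose a cylindrical algebraic decomposition of $\cZ_A$ adapted to nested projections, plus a ``forbidden corridor'' $\beta<\rho\alpha$ and boundedness claims (a)--(b), but the combinatorics of how a staircase chain can distribute itself among $\Pi$-cells is left unexamined, and it is not clear that (a)--(b) plus cell-counting actually closes the argument (in particular the claim that the ``combinatorial type of the slice is constant over each $\Pi$-cell'' is not something a CAD over the parameter $A$ gives you for free). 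The paper avoids all of this by exploiting the \emph{compactness of the Grassmanian}: fix an $\epsilon$-net $\cN$ of the Grassmanian $\cG$ with $\epsilon=\frac{1}{2^{12}dR^2}$, observe via Lemma \ref{lem01} that transversality with gap $(\frac14\widehat R,\,4\widehat R)$ is stable under replacing $V_k$ by its nearest net point $V\in\cN$, and apply Theorem \ref{sa_thm} only to the finitely many slices $\delta\mapsto(\cE,V)$ with $V\in\cN$. This yields a finite exceptional set $\Lambda_\bad$ of scales, of cardinality at most $\#(\cN)\cdot M$, outside of which the transversality status of $T_\delta\cE$ to each net subspace is locally constant in $\delta$. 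Taking $K_0=2\,\#(\cN)\cdot M$, a pigeonhole argument produces an interval $I_{k_*}$ disjoint from $\Lambda_\bad$, and approximating $V_{k_*}$ by a net point contradicts \eqref{main1}. In short: replace your CAD bookkeeping over $\Pi$ by a finite $\epsilon$-net in $\cG$ plus the stability Lemma \ref{lem01}, and the proof goes through without needing (a), (b), or any structure theory of the cells. This is exactly the interplay of ``topological compactness'' and ``logic-type compactness'' the introduction alludes to; your sketch uses only the latter, and pays for it.

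One smaller point on Step 1: your reduction to ellipsoids via John's theorem is correct when $\Omega$ is compact with nonempty interior, but ``exhausting by $\Omega\cap r\cB$ and passing to a (possibly degenerate) limit of John ellipsoids'' is not a rigorous reduction and the degenerate case is genuinely delicate. The paper sidesteps this by first replacing $\Omega$ with a compact, symmetric, convex $\tilde\Omega$ with nonempty interior that approximates $\Omega$ in Hausdorff distance \emph{only at the finitely many relevant scales} $\{T_{r(I_k)},T_{l(I_k)}\}$, using the stability Lemma \ref{comp_lem2}, and \emph{then} taking the John ellipsoid of $\tilde\Omega$. You should do the same; this costs you a further factor in the transversality gap, which the paper budgets for by starting the gap at $256dR$ rather than $256\sqrt dR$.
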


\subsection{Background on semialgebraic geometry}

We review some standard terminology from semialgebraic geometry: A set $B \subset \R^d$ is a \emph{basic set} if it is the solution set of a finite number of polynomial inequalities, i.e.,  $B = \{ x \in \R^d : p_i(x) \leq 0, \; q_j(x) < 0 \; \forall i \forall j \}$, for polynomials $p_1,\cdots,p_k, q_1,\cdots,q_l$ on $\R^d$. A \emph{semialgebraic set} is a finite union of basic sets. The class of semialgebraic sets is obviously closed under finite unions/intersections and complements. The celebrated Tarski-Seidenberg theorem on quantifier elimination implies that the class of semialgebraic sets is closed under projections $\pi : \R^d \rightarrow \R^{d-1}$; see \cite{vdD}. Semialgebraic sets are closely related to \emph{first-order formulas} over the reals, which are defined by the following elementary rules: (1) If $p$ is a polynomial on $\R^d$, then ``$p \leq 0$'' and ``$p < 0$'' are (first-order) formulas, (2) If $\Phi$ and $\Psi$ are formulas, then ``$\Phi$ and $\Psi$'', ``$\Phi$ or $\Psi$'', and ``not $\Phi$'' are formulas, and (3) If $\Phi$ is a formula and $x$ is a variable of $\Phi$ (ranging in $\R$), then ``$\exists x \; \Phi$'' and ``$\forall x \; \Phi$'' are formulas. A first-order formula is \emph{quantifier-free} if it arises only via (1) and (2). Clearly the semialgebraic sets are precisely the solution sets of quantifier-free formulas. The Tarski-Seidenberg theorem states that every first-order formula is equivalent (i.e., has an identical solution set) to a quantifier-free formula. Accordingly, the solution set of a first-order formula is semialgebraic. In particular, the set $\cM^+$ of all positive-definite $d \times d$ matrices is a semialgebraic subset of $\R^{d \times d}$ because it can be represented as the solution set of a first-order formula: $\cM^+ = \{ (a_{ij})_{1 \leq  i,j \leq d} : a_{ij} = a_{ji} \; \mbox{for} \; i,j=1,\cdots,d \mbox{ and } \sum_{i,j=1}^d a_{ij} x_i x_j > 0 \; \forall x_1,\cdots,  \forall x_d  \}$.  Later we will need the following theorem which gives an upper bound on the number of connected components of a semialgebraic set.

\begin{theorem}[Corollary 3.6, Chapter 3 of \cite{vdD}] \label{sa_thm}
If $S \subset \R^{k_1+k_2}$ is semialgebraic then there is a natural number $M$ such that for each point $a \in \R^{k_1}$ the fiber $S_a := \{ b \in \R^{k_2} : (a,b) \in S \}$ has at most $M$ connected components.
\end{theorem}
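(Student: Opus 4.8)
The statement is attributed to \cite{vdD}, Corollary 3.6 of Chapter 3, so the plan is simply to recall how this fiberwise bound on connected components follows from the standard cell decomposition theory for semialgebraic sets, and to note it as a citation rather than reprove van den Dries's development from scratch. First I would invoke the Cell Decomposition Theorem: every semialgebraic set $S \subset \R^{k_1+k_2}$ admits a finite partition into semialgebraic \emph{cells}, where each cell is semialgebraically homeomorphic to an open box $(0,1)^j$ for some $j$, and hence is connected; moreover the decomposition can be taken to be \emph{compatible with the projection} $\pi: \R^{k_1+k_2} \to \R^{k_1}$ onto the first $k_1$ coordinates, meaning that the images $\pi(C)$ of the cells $C$ themselves form a cell decomposition of (a semialgebraic subset of) $\R^{k_1}$.

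Granting this, the argument is short. Let $C_1, \dots, C_N$ be the cells partitioning $S$, with the decomposition compatible with $\pi$, and set $M := N$. Fix $a \in \R^{k_1}$ and consider the fiber $S_a = \{ b : (a,b) \in S \}$. Each cell $C_i$ contributes to the fiber the slice $(C_i)_a := \{ b : (a,b) \in C_i \}$, and $S_a$ is the disjoint union of these slices over those $i$ with $a \in \pi(C_i)$. The key point supplied by compatibility with $\pi$ is that each nonempty slice $(C_i)_a$ is itself connected: a cell over the base is, by construction in the cell decomposition, either a continuous section over its projection or the region strictly between two such sections (a ``band''), and in either case its intersection with a single fiber $\{a\} \times \R^{k_2}$ is a single point or a single open interval's worth of cell of lower dimension, in particular connected. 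Hence $S_a$ is a disjoint union of at most $N$ connected sets, so it has at most $N = M$ connected components, and this bound $M$ is independent of $a$.

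The only genuinely substantive input is the Cell Decomposition Theorem together with the finiteness of the number of cells, both of which are established in \cite{vdD} (Chapter 3); the remainder is the bookkeeping above. Accordingly I would present this not as an independent proof but as a short remark that the statement is \emph{Corollary 3.6, Chapter 3 of} \cite{vdD}, reproduced here for the reader's convenience, and refer to that source for the cell decomposition machinery. The one place to be slightly careful is the claim that slices of cells in a $\pi$-compatible decomposition are connected: this is immediate from the inductive definition of cells over a base, but it is worth stating explicitly since it is exactly what makes the uniform bound $M$ work.
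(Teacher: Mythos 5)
Your proposal is correct and takes essentially the same approach as the paper: the paper states this result purely as a citation to van den Dries (Corollary 3.6, Chapter 3) with no proof, and your sketch --- cell-decompose $S$, note that the fiber of each cell over a point of $\R^{k_1}$ is again a cell and hence connected, and bound the number of components of $S_a$ by the number of cells --- is exactly the argument in that reference. The one minor imprecision is that for $k_2>1$ the connectedness of a cell's fiber follows by induction on $k_2$ (fibers of cells are cells), not directly from the one-variable graph/band dichotomy, but this does not affect correctness.
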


\subsection{Proof of Proposition \ref{bdd_comp_prop}}

The coordinate mapping $ \Psi^{-1} : X \rightarrow \R^d$ is a Hilbert space isomorphism when $\R^d$ is equipped with the  standard Euclidean inner product $\langle \cdot, \cdot \rangle$. Thus $\cC_{(X,T_\delta), \delta_0, R}(\Omega) = \cC_{(\R^d, \widetilde{T}_\delta), \delta_0, R}(\Psi^{-1}(\Omega))$, where $\widetilde{T}_\delta := \Psi^{-1} T_\delta \Psi$. Therefore, we may reduce to the case where $(X,\langle \cdot, \cdot \rangle_X) = (\R^d, \langle \cdot, \cdot \rangle)$ and the transformation $T_\delta$ on $\R^d$ is represented in Euclidean coordinates by the diagonal matrix $D_\delta= \mbox{diag}(\delta^{-m_1},\cdots,\delta^{-m_d})$ (i.e., $T_\delta(x) = D_\delta \cdot x$).

We give a proof by contradiction. Fix a one-parameter family of linear transformations $T_\delta : \R^d \rightarrow \R^d$ of the above form, and fix a closed, symmetric, convex set $\Omega \subset \R^d$, $\delta_0 > 0$, and $R \geq 1$, such that $\cC_{(\R^d, T_\delta)_{\delta > 0}, \delta_0, R}(\Omega ) \geq K_0+1$ -- we will determine the value of $K_0$ later in the argument. The family $(T_\delta)_{\delta > 0}$ satisfies the semigroup properties $T_1 = id$ and $T_{\delta_1  \delta_2} = T_{\delta_1} \circ T_{\delta_2}$. Hence, by exchanging $\Omega$ and $T_{\delta_0}(\Omega)$, we may reduce to the case $\delta_0 = 1$. The inequality $\cC_{(\R^d, T_\delta)_{\delta > 0},1, R}(\Omega ) \geq K_0+1$ implies that there exist intervals $I_1 > \cdots > I_{K_0+1}$ in $(0,1]$ and subspaces $V_1,\cdots,V_{K_0+1} \subset \R^d$ such that (a) $T_{r(I_k)} (\Omega)$ is $R$-transverse to $V_k$, whereas (b) $T_{l(I_k)} (\Omega)$ is not $256dR$-transverse to $V_k$, for all $1 \leq k \leq K_0+1$. 

The Grassmanian $\cG$ of subspaces of $\R^d$ will be endowed with the metric 
\[
d_{\cG}(V_1,V_2) := \inf \{\| U - id \|_{\mbox{\scriptsize op}} : U \in O(d,\R), \; U (V_1) = V_2 \}. 
\]
In particular, $d_{\cG}(V_1,V_2) < \infty \iff \dim(V_1) = \dim(V_2)$. 

Fix $\epsilon := (2^{12} d R^2)^{-1}$ and let $\cN$ be an $\epsilon$-net in $\cG$.

By a perturbation argument we approximate $\Omega$ by an ellipsoid $\cE$ with similar properties. Let $R_0 := 256 dR$. Fix a compact, symmetric, convex set $\widetilde{\Omega} \subset \R^d$ with nonempty interior such that
\[
\left\{
\begin{aligned}
&d_H(T_{r(I_k)}(\Omega) \cap R_0 \cB, T_{r(I_k)}(\widetilde{\Omega})\cap R_0 \cB) < R_0^{-1} , \mbox{ and} \\
&d_H(T_{l(I_k)}(\Omega) \cap R_0 \cB, T_{l(I_k)}(\widetilde{\Omega}) \cap R_0 \cB) <  R_0^{-1} \;\; \mbox{for all} \;\; 1 \leq k \leq K_0+1,
\end{aligned}
\right.
\]
where $d_H$ is the Hausdorff metric with respect to the Euclidean norm on $\R^d$ -- we can choose $\widetilde{\Omega}$ of the form $(\Omega + \lambda \cB) \cap (\lambda^{-1} \cB)$ for a small enough constant $\lambda > 0$. By Lemma \ref{comp_lem2} and properties (a) and (b), we have that $T_{r(I_k)}( \widetilde{\Omega})$ is $4R$-transverse to $V_k$, but $T_{l(I_k)}(\widetilde{\Omega})$ is not $64dR$-transverse to $V_k$.  If $\cE$ is the John ellipsoid of $\widetilde{\Omega}$, which satisfies $\cE \subset \widetilde{\Omega} \subset \sqrt{d} \cE$, then $T_{r(I_k)}(\cE)$ is $4\sqrt{d} R$-transverse to $V_k$, but $T_{l(I_k)}(\cE)$ is not $64 \sqrt{d} R$-transverse to $V_k$. Hence, setting $\widehat{R} = 16\sqrt{d} R$, 
\begin{equation}
\label{main1}
\left\{
\begin{aligned}
&T_{r(I_k)}(\cE) \mbox{ is } (1/4) \widehat{R}\mbox{-transverse to } V_k \mbox{, but } \\
&T_{l(I_k)}(\cE) \mbox{ is not } 4\widehat{R}\mbox{-transverse to } V_k, \mbox{ for all } 1 \leq k \leq K_0 + 1.
\end{aligned}
\right.
\end{equation}

We parametrize ellipsoids by positive-definite matrices in the usual way: any ellipsoid has the form $\cE_{A} := \{ x  \in \R^d : \langle A  x , x \rangle \leq 1\}$ for some $A \in \cM^+$. Furthermore, any subspace of $\R^d$ has the form $V_C := \mbox{rowsp}(C)$, where $\mbox{rowsp}(C)$ denotes the span of the row vectors of a matrix $C \in \R^{d \times d}$. Consider the set
\[
S = \{ (C,A,\overline{R}, \delta)  \in \R^{d^2}  \times \cM^+ \times  [1,\infty) \times (0,\infty): T_{\delta}(\cE_{A}) \mbox{ is } \overline{R}\mbox{-transverse to } V_C \}.
\]
Here, it is useful to note that $T_\delta(\cE_A) = \cE_{A_\delta}$, with $A_\delta := D_{\delta^{-1}} A D_{\delta^{-1}}$. Then $S$ is a semialgebraic subset of $\R^{2d^2 + 2}$ because $\cM^+$ is semialgebraic and the statement ``$T_{\delta}(\cE_{A})$ is $\overline{R}$-transverse to $V_C$'' is expressable by a first order formula in the variables $(C,A,\delta,\overline{R}) \in \R^{2d^2+2}$.

Consider the ellipsoid $\cE$ determined to satisfy \eqref{main1}, and fix an arbitrary subspace $V \subset \R^d$. Write $V=V_C$ and $\cE = \cE_A$ for some $C \in \R^{d^2}$, $A \in \cM^+$. By Theorem \ref{sa_thm}, for any $\overline{R}  \geq 1$ there exists a finite set $\Lambda = \Lambda(V_C,\cE_A,\overline{R})  \subset (0,\infty)$ with $\#(\Lambda) \leq M$, where $M$ is an integer constant determined by $d$ and $\vec{m}$, so that for any interval $I \subset (0,\infty) \setminus \Lambda$, either $(C,A,\delta,\overline{R}) \in S$ (i.e., $T_{\delta}(\cE_A)$ is $\overline{R}$-transverse to $V_C$) for all $\delta \in I$, or $(C,A,\delta,\overline{R}) \notin S$ (i.e., $T_{\delta}(\cE_A)$ is not $\overline{R}$-transverse to $V_C$) for all $\delta \in I$. Set
\[
\Lambda_{\bad} := \bigcup_{V \in \cN} \Lambda(V,\cE,\widehat{R}).
\]
For an interval $I \subset (0,\infty) \setminus \Lambda_{\bad}$ and subspace $V \in \cN$, we have (A) either  [$T_{\delta}(\cE)$ is $\widehat{R}$-transverse to $V$ for all $\delta \in I$] or [$T_{\delta}(\cE)$ is not $\widehat{R}$-transverse to $V$ for all $\delta \in I$]. Note that $\#(\Lambda_{\bad}) \leq \#(\cN) \cdot M$.

Set  $K_0 := 2 \cdot  \#(\cN) \cdot M$. Then $K_0 + 1 > 2 \cdot \#(\Lambda_{\bad})$. By definition of the order relation on intervals, at most two of the intervals $I_1 > \cdots > I_{K_0+1}$ can contain a given number $\delta \in \R$. Thus, we can find $k_*$ so that $I_{k_*}$ is disjoint from $\Lambda_{\bad}$. 

Since $\cN$ is an $\epsilon$-net in $\cG$, there exist $U \in O(d, \R)$ and $V \in \cN$ with $U(V_{k_*}) = V$ and $\| U^{-1} - id \|_{\mbox{\scriptsize op}} = \| U - id \|_{\mbox{\scriptsize op}} < \epsilon = \frac{1}{2^{12} d R^2} = \frac{1}{16 \widehat{R}^2}$. By condition (A), either $T_{\delta}(\cE)$ is $\widehat{R}$-transverse to $V$ for all $\delta \in I_{k_*}$, or $T_{\delta}(\cE)$ is not $\widehat{R}$-transverse to $V$ for all $\delta \in I_{k_*}$. By Lemma \ref{lem01},  either $T_{\delta}(\cE)$ is $(\frac{1}{4}) \widehat{R}$-transverse to $V_{k_*}$ for all $\delta \in I_{k_*}$, or $T_{\delta}(\cE)$ is not $4 \widehat{R}$-transverse to $V_{k_*}$ for all $\delta \in I_{k_*}$. This contradicts \eqref{main1} for $k=k_*$ and finishes the proof of the proposition.

\section{The Local Main Lemma}

\begin{definition}
For $x \in \R^n$, let $\cP_x = \cP$ be the Hilbert space endowed with the inner product $\langle \cdot , \cdot \rangle_{x} := \langle \cdot, \cdot \rangle_{x,1}$. Write $\cX_x$ for the system $(\cP_x, \tau_{x,\delta})_{\delta > 0}$, where the rescaling transformations $\tau_{x,\delta} : \cP_x \rightarrow \cP_x$ ($\delta > 0$) are given by $\tau_{x,\delta}(P)(z) = \delta^{-m} P(x + \delta(z-x))$. With respect to the monomial basis $\{(z-x)^\alpha\}_{|\alpha| \leq m-1}$, the transformation  $\tau_{x,\delta}$ is represented by a diagonal matrix with negative integer powers of $\delta$ on the main diagonal. Given a ball $B \subset \R^n$ and a finite set $E \subset \R^n$, the local complexity of $E$ on $B$ is  the integer-valued quantity
\[
\cC(E| B) := \sup_{x \in B} \cC_{\cX_x, C_* \diam(B),R_\lbl} (\sigma(x)).
\]
\end{definition}

\begin{rem} \label{comp_reform}
We obtain an equivalent formulation of local complexity by inspection of Definition \ref{comp_defn}: We have $\cC(E|B) \geq K$ if and only if there exists $x \in B$ and there exist subspaces $V_1,\cdots,V_K \subset \cP$ and intervals $I_1 > I_2 > \cdots > I_K$ in $(0,  \diam(B)]$, such that $\tau_{x,r(I_k)} (\sigma(x))$ is $(x,C_*, R_\lbl)$-transverse to $V_k$, but $\tau_{x,l(I_k)}( \sigma(x))$ is not $(x,C_*, R_\med)$-transverse to $V_k$ for all $k=1,\cdots,K$. Here,  $R_\med := 256 D R_\lbl$ (see \eqref{fix_c}).
\end{rem}

We have the following basic monotonicity property of complexity: $B_1 \subset B_2 \implies \cC(E|B_1) \leq \cC(E|B_2)$. As a consequence of Proposition \ref{bdd_comp_prop}, we also have the following result:

\begin{corollary}\label{bdd_comp_cor}
There exists $K_0 = K_0(m,n)$ such that $\cC(E|B) \leq K_0$ for any closed ball $B \subset \R^n$ and finite subset $E \subset \R^n$.
\end{corollary}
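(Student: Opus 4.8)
The plan is to recognize the dynamical system underlying the definition of $\cC(E|B)$ as an instance of the abstract framework of Definition \ref{comp_defn}, so that Proposition \ref{bdd_comp_prop} applies uniformly in the basepoint $x$ and the set $E$.

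First I would check that, for every fixed $x \in \R^n$, the system $\cX_x = (\cP_x, \tau_{x,\delta})_{\delta > 0}$ has the form required in Definition \ref{comp_defn}, with $d = D$ and a vector $\vec m_0 \in \Z_{\geq 0}^D$ that does \emph{not} depend on $x$. Recall from Section \ref{sec_not} that $\{\sqrt{\alpha!}\,(z-x)^\alpha\}_{|\alpha| \leq m-1}$ is an orthonormal basis of $\cP_x = (\cP, \langle \cdot,\cdot\rangle_x)$, and that $\tau_{x,\delta}$ acts diagonally in the monomial basis via $\tau_{x,\delta}\big((z-x)^\alpha\big) = \delta^{|\alpha|-m}(z-x)^\alpha$; rescaling the basis vectors does not change this, so $\tau_{x,\delta}$ is also diagonal in the orthonormal basis above, with the same eigenvalues $\delta^{-(m-|\alpha|)}$. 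Taking $\Psi_x : \R^D \to \cP_x$ to be the coordinate map sending the standard basis of $\R^D$ to $\{\sqrt{\alpha!}\,(z-x)^\alpha\}_{|\alpha|\leq m-1}$, we get $\tau_{x,\delta} = \Psi_x \widetilde T_\delta \Psi_x^{-1}$ where $\widetilde T_\delta$ is represented in standard Euclidean coordinates by $\mathrm{diag}\big(\delta^{-(m-|\alpha|)}\big)_{|\alpha|\leq m-1}$. Thus $\cX_x$ is a system of the type in Definition \ref{comp_defn} with $d = D$ and $\vec m_0 := (m-|\alpha|)_{|\alpha|\leq m-1} \in \Z_{\geq 0}^D$; both $D$ and $\vec m_0$ are determined by $m$ and $n$ alone.

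Next I would invoke Proposition \ref{bdd_comp_prop} with this data and with $R = R_\lbl$, which is a constant determined by $m$ and $n$ (see \eqref{fix_c}). This yields a constant $K_0 = K_0(D, \vec m_0, R_\lbl) = K_0(m,n)$ such that $\cC_{\cX_x, \delta_0, R_\lbl}(\Omega) \leq K_0$ for every closed, symmetric, convex set $\Omega \subset \cP_x$ and every $\delta_0 > 0$. The bound supplied by Proposition \ref{bdd_comp_prop} depends only on $d$, $\vec m$, and $R$, and in particular is independent of the coordinate map $\Psi_x$, hence of $x$.

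Finally, fix a closed ball $B \subset \R^n$ and a finite set $E \subset \R^n$. For each $x \in B$, the set $\sigma(x) = \sigma(x,E)$ is a closed, symmetric, convex subset of $\cP$ (Section \ref{sec_not}), so the previous paragraph, applied with $\delta_0 = C_* \diam(B)$, gives $\cC_{\cX_x, C_*\diam(B), R_\lbl}(\sigma(x)) \leq K_0$. Taking the supremum over $x \in B$ gives $\cC(E|B) \leq K_0$, with $K_0 = K_0(m,n)$ as claimed. The only step needing any care is the verification in the second paragraph that $\cX_x$ genuinely fits Definition \ref{comp_defn} — specifically, that the dilations act diagonally in an \emph{orthonormal} basis with nonnegative integer exponents — but this is immediate once one works with the rescaled monomial basis rather than the monomials themselves, so there is no real obstacle here; all the work has already been done in Proposition \ref{bdd_comp_prop}.
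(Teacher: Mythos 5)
Your argument is correct and is precisely the (unstated) proof the paper intends: the paper presents Corollary \ref{bdd_comp_cor} as an immediate consequence of Proposition \ref{bdd_comp_prop} without writing out the verification. Your observation that one must pass from the raw monomial basis (which is only orthogonal, as the paper's remark suggests) to the rescaled orthonormal basis in order to match the hypotheses of Definition \ref{comp_defn} is exactly the small detail worth spelling out, and you have handled it correctly.
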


Next we define the (global) complexity $\cC(E)$ of a finite subset $E \subset \R^n$.

\begin{definition}\label{comp_def}
Given a finite subset $E \subset \R^n$, let $B_0 \subset \R^n$ be a compact ball containing $E$ -- for definiteness, one can choose $B_0$ to be the compact ball of minimal radius containing $E$. Then let $\cC(E) := \cC(E|5B_0)$.
\end{definition}

Now Lemma \ref{c_bd_lem} from the introduction follows from Corollary \ref{bdd_comp_cor}. The main apparatus that will be used to prove Theorem \ref{main_thm2} is the following:

\begin{lemma}[Local Main Lemma for $K$] \label{lml}

Let $K \geq -1$. There exist constants $C^\# = C^\#(K) \geq 1$ and $\ell^\# = \ell^\#(K) \in \Z_{\geq 0}$, depending only on $K,m,n$, with the following properties. 

Let $E \subset \R^n$ be finite and let  $B_0 \subset \R^n$ be a ball. If $\cC(E|5B_0) \leq K$ then the following holds:

\underline{Local Finiteness Principle on $B_0$}: Suppose $f : E \rightarrow \R$, $M > 0$, $x_0 \in B_0$, and $P_0 \in \cP$ satisfy the following \emph{finiteness hypothesis}: For all $S \subset E$ with $\#(S) \leq (D+1)^{\ell^\#}$ there exists $F^S \in C^{m-1,1}(\R^n)$ with $F^S = f$ on $S$, $J_{x_0} F^S = P_0$, and $\|F^S \| \leq M$.  Then there exists a function $F \in C^{m-1,1}(\R^n)$ with $F = f$ on $E \cap B_0$, $J_{x_0} F = P_0$, and $\|F \| \leq C^\# M$. 

\end{lemma}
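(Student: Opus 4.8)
The plan is to induct on $K$, starting from the base case $K=-1$ (and $K=0$), where $\cC(E|5B_0) \le -1$ is vacuous and $K=0$ forces $\sigma(x)$ to remain uniformly transverse to a fixed DTI subspace across all scales $\delta \in (0,\diam(5B_0)]$; in that regime the Whitney convexity of $\sigma(x)$ together with transversality to a DTI subspace $V$ should let one reduce the extension problem to a problem ``modulo $V$'' of strictly smaller jet-dimension, which is handled directly. For the inductive step, assume the lemma for all $K' < K$ with constants $C^\#(K'), \ell^\#(K')$. Given $B_0$ with $\cC(E|5B_0) \le K$, I would run Fefferman's stopping-time construction in coordinate-free form: build a Calderón–Zygmund-type decomposition of $B_0$ into a Whitney cover $\cW$, where a ball $B \in \cW$ is declared ``stopped'' as soon as passing from scale $C_*\diam(5B_0)$ down to scale $C_*\diam(B)$ has destroyed the $R_\med$-transversality of $\sigma(x)$ (for $x$ near $B$) with respect to the DTI subspace $V$ furnished by Lemma \ref{main-labeling-lemma} applied to $5B_0$. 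By Remark \ref{comp_reform} and the monotonicity of complexity, each such stopped ball $B$ satisfies $\cC(E|5B) \le K-1$: one transversality-loss interval has been ``used up'' inside the parent, leaving at most $K-1$ available inside $5B$. Hence the inductive hypothesis applies on each $B \in \cW$, yielding local solutions $F_B \in C^{m-1,1}(\R^n)$ with $F_B = f$ on $E \cap B$, a prescribed jet at a chosen basepoint $x_B$, and $\|F_B\| \le C^\#(K-1) M$.

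The second main ingredient is to glue these $F_B$ via the Gluing Lemma (Lemma \ref{pou_lem2}). For that I must verify the three bullet hypotheses, the delicate one being the jet-compatibility bound $|J_{x_B}F_B - J_{x_{B'}}F_{B'}|_{x_B,\diam(B)} \le C M$ on overlapping balls. This is where the transversality of $\sigma$ to the DTI subspace $V$ does the real work: on the unstopped region the set $\Gamma_\ell(x,f,M)$ has controlled geometry transverse to $V$, so the jets $J_{x_B}F_B$ and $J_{x_{B'}}F_{B'}$ — both of which lie in (translates of) $\sigma_\ell$-type sets by Lemma \ref{gamma-sigma_lem} — can only differ by an element of $V$ of bounded size plus a small error; since $V$ is DTI and the construction is set up so that the $V$-components are matched (e.g.\ by choosing all $F_B$ to have the same image under the quotient-by-$V$ map, using Helly/the $\Gamma_\ell$ nonemptiness from Lemma \ref{gamma_trans_lem}), the compatibility bound follows. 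One also has to propagate the prescribed jet condition $J_{x_0}F = P_0$: this requires that $x_0 \in B_0$ lie in one of the cover balls and that the local solution there was built with the correct jet, which is arranged by feeding $P_0$ (suitably ``transported'' along the quasicontinuity estimate \eqref{eq_1113}) into the finiteness hypothesis for that ball.

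The bookkeeping of constants is routine but must be tracked: $\ell^\#(K)$ is taken large enough (a bounded multiple of $\ell^\#(K-1)$ plus a constant, so that cardinalities $(D+1)^{\ell^\#(K-1)}$ summed over the bounded-overlap cover $\cW$ still fit under $(D+1)^{\ell^\#(K)}$, using Lemma \ref{gg_lem}), and $C^\#(K) = C \cdot C^\#(K-1)$ for a universal $C$ coming from the Gluing Lemma and Lemma \ref{lem_prod_cont}; since $K \le K_0(m,n)$ by Corollary \ref{bdd_comp_cor}, this recursion terminates with constants depending only on $m,n$. I expect the main obstacle to be the stopping-time definition and the proof that a stopped ball genuinely has complexity $\le K-1$ — i.e.\ correctly formalizing ``one transversality-loss interval has been consumed'' so that the nested intervals $I_1 > \cdots > I_K$ in Definition \ref{comp_defn} at an interior point of $5B$ can be extended by one more interval at the parent scale, contradicting $\cC(E|5B_0) \le K$ if more than $K-1$ remained. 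A secondary difficulty is ensuring the basepoints and prescribed jets are consistently threaded through the recursion without losing the uniform seminorm bound, which is exactly what the $\Gamma_\ell$/$\sigma_\ell$ machinery and the quasicontinuity estimates \eqref{eq_1113}–\eqref{eq_1114} are designed to handle.
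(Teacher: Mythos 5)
Your high-level plan — induct on $K$, build a stopping-time Whitney decomposition where a ball is ``stopped'' when the transversality of $\sigma$ to the DTI subspace $V$ has dropped below a threshold, apply the inductive hypothesis on the stopped balls, and glue with Lemma \ref{pou_lem2} — matches the paper's skeleton, and your identification of the two delicate points (showing stopped balls have complexity $\le K-1$, and matching the prescribed jet $P_0$ through the recursion) is accurate.

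There is, however, a genuine gap in your bookkeeping of $\ell^\#$. You propose to take $\ell^\#(K)$ just large enough that ``cardinalities $(D+1)^{\ell^\#(K-1)}$ summed over the bounded-overlap cover $\cW$ still fit under $(D+1)^{\ell^\#(K)}$, using Lemma \ref{gg_lem}.'' But Lemma \ref{gg_lem} controls the local overlap of $\cW$ (at most $100^n$ balls through any point), \emph{not} its total cardinality. A Whitney cover of $B_0$ can have arbitrarily many balls, so $\#\cW$ carries no bound in terms of $m,n$ — the paper points this out explicitly in the remarks before Lemma \ref{fip_lem}: ``we lack any control on the cardinality of the cover $\cW_0$ of $B_0$.'' Your proposed recursion for $\ell^\#$ would therefore depend on $E$, not just on $m,n,K$, which breaks the statement of the lemma.

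The fix — and the part of the paper your proposal does not anticipate — is the \emph{keystone ball} machinery (Definition \ref{key_defn}, Lemmas \ref{keystone_lem}--\ref{mainlem_1}). One applies the Helly/gluing-based finiteness principle (Lemma \ref{fip_lem}) not to $B_0$ but to a dilate $AB^\#$ of a keystone ball, where the number of Whitney balls involved \emph{is} controlled by a volume comparison ($N\le (180A)^n$), giving an $\ell$-increment that depends only on $m,n$. Transversality of $\sigma_\ell$ to $V$ is established at the keystone scale via Lemma \ref{lemma2} and then transported to all of $\cW_0$ using the quasicontinuity estimates and the geometric proximity of every ball to its keystone (Lemma \ref{key_geom_lem}). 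This is precisely what makes your jet-compatibility step go through: you correctly observe that $P_B - P_{B'}\in V$ and that one needs to cap the size of $(\sigma_{\ell}+\text{error})\cap V$, but the reason this is hard — and the reason it cannot be read off directly from property (c) of the decomposition — is that $\sigma_\ell(x)$ can be strictly larger than $\sigma(x)$; only the keystone argument supplies the needed two-sided comparison. Without it, the ``jets can only differ by an element of $V$ of bounded size plus a small error'' step is an assertion rather than an estimate.

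A minor point: you do not need a separate base case at $K=0$. The paper uses $K=-1$ (vacuous, since complexity is non-negative) as the base, and the $K=0$ step is handled by the induction because every stopped ball with $\ge 2$ points in $6B$ has $\cC(E|6B)\le -1$, which forces $\#(6B\cap E)\le 1$; the trivial extension \eqref{triv_case} then takes over. Your suggested ``reduce modulo $V$ to a smaller jet-dimension'' for $K=0$ is not used and is not obviously easier than running the general step.
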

\begin{rem} \label{reform_lfip}
Equivalently, the Local Finiteness Principle on $B_0$ states that
\[
\Gamma_{\ell^\#}(x_0,f,M) \subset  \Gamma_{E \cap B_0} (x_0, f, C^\# M).
\]
In particular, by taking $f = 0$ and $M=1$, we have
\[
\sigma_{\ell^\#}(x_0) \subset C^\# \cdot \sigma(x_0,E \cap B_0).
\]
\end{rem}

\subsection{Proof of Theorem \ref{main_thm2}}

We now explain why it is that the Local Main Lemma implies Theorem \ref{main_thm2}. Fix a ball $B_0$ with $E \subset B_0$ as in Definition \ref{comp_def}. We apply the Local Main Lemma for $K = \cC(E) = \cC(E|5B_0)$ and deduce that the Local Finiteness Principle for $B_0$ is true. Therefore, $\Gamma_{\ell^\#}(x_0,f,M) \subset  \Gamma_{E} (x_0, f, C^\# M)$ for any $M>0$. Our main result, Theorem \ref{main_thm2}, now follows easily: By Lemma \ref{gamma_trans_lem}, the Finiteness Hypothesis $\cF\cH(k^\#)$  (see \eqref{FH}) with constant $k^\# = (D+1)^{\ell^\# + 1}$ implies $\Gamma_{\ell^\#}(x_0,f,1) \neq \emptyset$, and so $\Gamma_{E} (x_0, f, C^\#) \neq \emptyset$. In particular, there exists $F \in C^{m-1,1}(\R^n)$ with $F = f$ on $E$ and $\| F \| \leq C^\#$. 

\begin{rem}\label{comment1}
In section \ref{dep_sec} we verify that the constant $C^\# = C^\#(K)$ in the Local Main Lemma depends exponentially on $K$, and the constant $\ell^\#=\ell^\#(K)$ depends linearly on $K$; thus, $k^\# = (D+1)^{\ell^\# + 1}$ will depend exponentially on $K$. This finishes the proof of Theorem \ref{main_thm2}.
\end{rem}

\subsection{Organization}

The rest of the paper is organized as follows.  In section \ref{sec_ind} we formulate the Main Induction Argument that will be used to prove the Local Main Lemma for all $K$. In section \ref{sec_mdl} we prove the Main Decomposition Lemma which will allow us to pass from a local extension problem on a ball $B_0$ to a family of easier subproblems on a collection of ``Whitney balls'' $B \subset 5B_0$; this lemma is the main component in the analysis of the induction step. In section \ref{sec_fmcj2}, we state a technical lemma that allows us to control the shape of the set $\sigma_{\ell}(x)$ at lengthscales which are much coarser than the lengthscales of the balls in the decomposition; we next apply this lemma to enforce mutual consistency for a family of jets that are associated to the local extension problems on the Whitney balls. In section \ref{sec_glue} we will construct a solution to the local extension problem on $B_0$ by gluing together the solutions to the local problems on the Whitney balls by means of a partition of unity; the consistency conditions arranged in the previous step will ensure that the individual local extensions are sufficiently compatible to ensure the desired regularity properties for the glued-together function.

\section{The Main Induction Argument I: Setup}\label{sec_ind}

We will prove the Local Main Lemma by induction on the complexity parameter $K \in \{-1,0,\cdots, K_0\}$ -- recall, $K_0$ is a finite upper bound on the local complexity of any set. When $K=-1$, the Local Main Lemma is vacuously true (say, for $C^\#(-1) = 1$, $\ell^\#(-1) = 0$) since complexity is non-negative. This establishes the base case of the induction.

For the induction step, fix $K \in \{0,1,\cdots, K_0 \}$. The induction hypothesis states that the Local Main Lemma for $K-1$ is valid. Denote the finiteness constants in the Local Main Lemma for $K-1$ by $\ell_\old := \ell^\#(K-1)$ and $C_\old := C^\#(K-1)$. Applying the Local Main Lemma to a closed ball of the form $(6/5) \cdot B$, we obtain
\begin{equation}\label{ind_hyp}
\begin{aligned}
&\mbox{If } \; x \in (6/5) \cdot B \mbox{ and } \cC(E|6B) \leq K-1, \; \mbox{then}, \; \\
& \Gamma_{\ell_\old}(x,f,M) \subset \Gamma_{E \cap \frac{6}{5}B}(x,f,C_\old M) \; \mbox{for any} \; f: E \rightarrow \R, \; M > 0.
\end{aligned}
\end{equation}
(Here we use the formulation of the Local Finiteness Principle in  Remark \ref{reform_lfip}.)

Fix a ball $B_0 \subset \R^n$ with $\cC(E|5B_0) \leq K$. To prove the Local Main Lemma for $K$, we are required to prove the Local Finiteness Principle (LFP) on $B_0$ for a suitable choice of the constants $\ell^\# \in \Z_{\geq 0}$ and $C^\# \geq 1$, determined by $m$, $n$, and $K$. Thus, our goal is to prove that $\Gamma_{\ell^\#}(x_0,f,M) \subset \Gamma_{E \cap B_0}(x_0,f,C^\# M)$ for any $f: E \rightarrow \R$, $x_0 \in B_0$, $M > 0$. A rescaling of the form $f \mapsto f/M$ allows us to reduce to the case $M=1$. 

If $\#(B_0 \cap E) \leq 1$ then the LFP is true as long as $C^\# \geq 1$ and $\ell^\# \geq 0$ -- indeed, 
\begin{equation}
\label{triv_case}
\begin{aligned}
\Gamma_{\ell^\#} (x_0, f, 1) &\subset \Gamma_0(x_0,f,1) = \bigcap_{S \subset E, \; \#(S) \leq 1} \Gamma_S(x_0,f,1)  \\
& \subset \Gamma_{E \cap B_0} (x_0,f,1) \subset \Gamma_{E \cap B_0} (x_0,f,C^\#).
\end{aligned}
\end{equation}
Accordingly, it suffices to assume that
\begin{equation}
\label{twopoints}
\#(B_0 \cap E) \geq 2.
\end{equation}
Under these assumptions, we will prove that for any $x_0 \in B_0$ and $f: E \rightarrow \R$, 
\begin{equation}
\label{main-task}
\Gamma_{\ell^\#}(x_0,f,1) \subset \Gamma_{E \cap B_0}(x_0,f,C^\#).
\end{equation}

\section{The Main Decomposition Lemma}\label{sec_mdl}

In this section we fix the following data: A closed ball $B_0 \subset \R^n$; a point $x_0 \in B_0$; an integer $K \in \Z_{\geq 0}$; a finite set $E \subset \R^n$ satisfying $\#(E \cap B_0) \geq 2$ and $\cC(E|5B_0) \leq K$; a function $f : E \rightarrow \R$; an integer $\ell^\#  \in \Z_{\geq 0}$; and a polynomial $P_0 \in \Gamma_{\ell^\#}(x_0,f,1)$.

Given the data $(B_0,x_0,K,E,f,\ell^\#,P_0)$, we will now explain how to produce a Whitney cover $\cW$ of the ball $2B_0$ so as to decompose the local extension problem on $B_0$ into a collection of easier local extension problems associated to the balls $B \in \cW$.

\begin{lemma}[Main Decomposition Lemma]
\label{mdl}
Define the constants $R_\lbl \ll R_\med \ll R_\big \ll R_\huge$, $C_*$, and $C_{**}$ as in \eqref{fix_c}. Fix data $(B_0,x_0,K,E,f,\ell^\#,P_0)$ as above. Then there exists a subspace $V \in \dti$ such that

(a) The set $\sigma(x)$ is $(x, C_* \diam(B_0), R_\lbl)$-transverse to $V$ for all $x \in 100 B_0$.

\noindent There exists a Whitney cover $\cW$ of $2B_0$ such that, for all $B \in \cW$,

(b) $B \subset 100 B_0$ and $\diam(B) \leq \frac{1}{2} \diam(B_0)$.

(c) The set $\sigma(x)$ is $(x, C_* \delta, R_\huge)$-transverse to $V$ for all $x \in 8B$, $\delta \in [\diam(B),\diam(B_0)]$. 

(d) Either $\#(6B \cap E) \leq 1$ or $\cC(E|6B) < K$.

\noindent For every $B \in \cW$ there exists a point $z_B \in \R^n$ and a jet $P_B \in \cP$ satisfying

(e)  $z_B \in \frac{6}{5}B \cap 2B_0$; also, if $x_0 \in \frac{6}{5}B$ then $z_B = x_0$.

(f) $P_B\in \Gamma_{\ell^\#-1}(z_B,f,C_{**})$ and $P_0 - P_B \in C_{**} \cB_{z_B,\diam(B_0)}$; also, if $x_0 \in \frac{6}{5}B$ then $P_B = P_0$.

(g) $P_0 - P_B \in V$.
\end{lemma}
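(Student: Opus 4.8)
The plan is to build the cover $\cW$ by a Calder\'on--Zygmund-type stopping-time construction adapted to the complexity of $E$. First I would obtain the subspace $V$: by Lemma \ref{main-labeling-lemma} applied to $B_0$ there is $V \in \dti$ so that $\sigma(z)$ is $(z, C_* \diam(B_0), R_\lbl)$-transverse to $V$ for all $z \in 100 B_0$, which is exactly (a). With this $V$ fixed once and for all, I would then declare a dyadic subcube (or ball) $Q \subset 2B_0$ to be \emph{OK} if for every $x \in 8Q$ and every scale $\delta \in [\diam(Q), \diam(B_0)]$, the set $\sigma(x)$ is $(x, C_*\delta, R_\huge)$-transverse to $V$, and moreover either $\#(6Q \cap E)\le 1$ or $\cC(E|6Q) < K$; otherwise $Q$ is \emph{bad}. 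Starting from $2B_0$ itself (which is \emph{not} OK in general, since $\cC(E|5B_0)\le K$ only gives $<K$ after shrinking, and (b) forces $\diam(B) \le \frac12\diam(B_0)$), I would bisect bad cubes and stop as soon as an OK cube is reached, forming $\cW$ from the maximal OK cubes, then fatten slightly to closed balls and verify the Whitney-cover axioms (a)--(c) of Definition \ref{Whit_cov_def} via the usual ``neighboring stopping cubes have comparable size'' argument. The finiteness of this process is where Corollary \ref{bdd_comp_cor} (equivalently Proposition \ref{bdd_comp_prop}) enters decisively: the complexity $\cC(E|5B_0)$ is bounded by $K_0$, so the ``transversality to $V$ fails at some scale'' events can happen only boundedly often along any descending chain of cubes, which caps the depth of the recursion and hence bounds $\diam(B_0)/\diam(B)$ from above; combined with $E$ finite this makes $\cW$ finite. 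This yields (b), (c), (d).

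Next I would produce $z_B$ and $P_B$ for each $B \in \cW$. For (e): if $x_0 \in \tfrac65 B$ set $z_B := x_0$ (note $x_0 \in B_0 \subset 2B_0$); otherwise pick any $z_B \in \tfrac65 B \cap 2B_0$ — such a point exists because $\tfrac65 B$ meets $2B_0$ (as $B\subset 100B_0$ covers a piece of $2B_0$) and both sets are balls. For (f): we are given $P_0 \in \Gamma_{\ell^\#}(x_0, f, 1)$. By the quasicontinuity estimate \eqref{eq_1113} of Lemma \ref{gamma_trans_lem}, applied with $\delta = \diam(B_0) \ge |x_0 - z_B|$ (valid since $|x_0 - z_B| \le \mathrm{diam}(2B_0)$, and one absorbs the constant into $C_{**}$ using \eqref{eq:change_delta} and \eqref{fix_c}), there is $P_B \in \Gamma_{\ell^\#-1}(z_B, f, C')$ with $P_0 - P_B \in C' C_T \cB_{x_0,\diam(B_0)}$; then \eqref{trans_norm} converts $\cB_{x_0,\diam(B_0)}$ to $\cB_{z_B, \diam(B_0)}$ at the cost of another universal factor, and the definition of $C_{**}$ in \eqref{fix_c} is chosen precisely to dominate the resulting constant, giving $P_B \in \Gamma_{\ell^\#-1}(z_B,f,C_{**})$ and $P_0 - P_B \in C_{**}\cB_{z_B,\diam(B_0)}$. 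When $x_0 \in \tfrac65 B$ we simply take $P_B = P_0$; then $P_0 - P_B = 0$, (f) is trivial, and $\ell^\# \ge 1$ must be arranged so that $\Gamma_{\ell^\#-1}$ makes sense (this is a constraint on the choice of $\ell^\#$ in the induction, not an obstruction here).

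The genuinely delicate point is (g): we must be able to choose $P_B$ so that, in addition to (f), the difference $P_0 - P_B$ lies \emph{exactly} in the DTI subspace $V$. The idea is to use transversality of $\sigma(x_0)$ (hence of $\Gamma_{\ell^\#}$ up to the $\sigma$-versus-$\Gamma$ dictionary of Lemma \ref{gamma-sigma_lem}) to $V$ at scale $C_*\diam(B_0)$, which by (a) holds with constant $R_\lbl$: condition (1) of transversality, $\cB_{x_0,C_*\diam(B_0)}/V \subset R_\lbl (\sigma(x_0)\cap \cB_{x_0,C_*\diam(B_0)})/V$, lets me correct the already-constructed $P_B$ by an element of $\sigma_{\ell^\#-1}$-times-a-controlled-ball — i.e., replace $P_B$ by $P_B + \Delta$ with $\Delta$ small in $\cB_{z_B,\diam(B_0)}$ and $\Delta \equiv P_0 - P_B \pmod V$ — so that the new difference is forced into $V$ while (f) is preserved (this is exactly the role of the ``$(5C_*)^m$'' factor inside $C_{**}$). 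Concretely I would: write the residue $(P_0 - P_B) + V \in \cP/V$; use transversality to find $\Delta \in R_\lbl \, C_{**}' \, \cB_{x_0, C_* \diam(B_0)}$ representing the \emph{same} residue and lying in the span-closure of $\sigma$ so that $P_B + \Delta$ stays in a dilated $\Gamma_{\ell^\#-1}(z_B, f, \cdot)$; then $P_0 - (P_B+\Delta) \in V$. I expect the bookkeeping of which ball radius ($\diam(B_0)$ vs.\ $C_*\diam(B_0)$) appears where, and keeping all constants dominated by $C_{**}$ as defined in \eqref{fix_c}, to be the main obstacle — the conceptual content is just ``transversality lets you adjust jets along $V$ cheaply,'' but making (f) and (g) hold simultaneously with the \emph{stated} constant requires care, and in the case $x_0 \in \tfrac65 B$ one must check no correction is needed (indeed $P_0 - P_0 = 0 \in V$, consistent with $P_B = P_0$).
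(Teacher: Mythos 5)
Your plan for (a) and the jet construction in (e)--(g) is essentially the paper's: apply Lemma \ref{main-labeling-lemma} to obtain $V$; use Lemma \ref{gamma_trans_lem} to produce a candidate $P_B\in\Gamma_{\ell^\#-1}(z_B,f,1)$ close to $P_0$; then correct it modulo $V$ by adding a bounded element of $\sigma_{\ell^\#-1}(z_B)$ supplied by the transversality in (a), with $C_{**}$ in \eqref{fix_c} designed to absorb exactly these constants. That portion is sound, even if the ``span-closure of $\sigma$'' phrasing is vague where the paper simply uses $\sigma(z_B)\subset\sigma_{\ell^\#-1}(z_B)$.

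The cover construction, and especially (d), is where there is a genuine gap. You define ``OK'' cubes to be those that \emph{already} satisfy both (c)-type transversality (for all $x\in 8Q$ and all $\delta\in[\diam Q,\diam B_0]$) and the dichotomy (d), then bisect bad cubes. This makes (d) an axiom rather than a conclusion and shifts the whole burden onto termination, which you justify only by a hand-wave toward Corollary \ref{bdd_comp_cor}. Two concrete issues. First, demanding transversality at \emph{every} $x\in 8Q$ and \emph{every} scale up to $\diam B_0$ is much stronger than needed and is not guaranteed to hold even for arbitrarily small $Q$; the paper's OK condition ($\#(B\cap E)\geq 2$ together with $\exists z\in B$ such that $\sigma(z)$ is $(z,C_*\delta,R_\big)$-transverse to $V$ for all $\delta\in[\diam B,\diam B_0]$) deliberately asks for a single witness point, then upgrades to $\forall x\in 8B$ afterwards via Lemma \ref{trans_lem}, paying the price $R_\big\to R_\huge$. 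Second, and more fundamentally, your proposal skips the central mechanism behind (d): the paper \emph{derives} it from the fact that $6B$ is \emph{not} OK. Namely, if $\#(6B\cap E)\geq 2$ then there is a scale $\delta_z\in[6\diam B,\diam B_0]$ at which $\sigma(z)$ fails to be $(z,C_*\delta_z,R_\big)$-transverse to $V$, while by (a) it is $(z,C_*\diam B_0,R_\lbl)$-transverse; prepending the interval $I_0=[\delta_z,\diam B_0]$ with subspace $V_0=V$ to any witness of $\cC(E|6B)=J$ produces a witness of $\cC(E|5B_0)\geq J+1$, hence $J<K$. This interval-prepending step is exactly what your ``transversality drops happen boundedly often along a descending chain'' remark gestures at without pinning down (the drops must be anchored at the same base point $z$ to chain, which the paper arranges carefully). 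Finally, the paper does not bisect at all: it sets $r(x):=\inf\{r>0 : B(x,r)\subset 100B_0,\ B(x,r)\text{ OK}\}$, which is positive (small balls hold $\leq 1$ point of $E$) and finite (balls containing $2B_0$ are vacuously OK since $[\diam B,\diam B_0]$ is empty), puts $B_x:=B(x,r(x)/7)$, and applies Vitali; by definition of the infimum and inclusion-monotonicity, $8B_x$ is OK and $6B_x$ is not, from which (b)--(d) all follow.
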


Let $\cW$ be the Whitney cover arising from Lemma \ref{mdl}. We obtain a local finiteness principle for the balls $B \in \cW$ in the next lemma.
\begin{lemma}
\label{old_fp_lem}
The Local Finiteness Principle on $\frac{6}{5}B$ is true for every $B \in \cW$ with finiteness constants $\ell_\old = \ell^\#(K-1) \in \Z_{\geq 0}$ and $C_\old = C^\#(K-1) \geq 1$. That is, 
\[
\Gamma_{\ell_\old}(x,f,M) \subset \Gamma_{E \cap \frac{6}{5}B}(x,f,C_\old M)\mbox{, for all } B \in \cW, \; x \in \frac{6}{5}B, \; M > 0.
\]
\end{lemma}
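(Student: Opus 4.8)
The plan is to derive this lemma as an immediate consequence of the induction hypothesis \eqref{ind_hyp}, after checking that its two structural prerequisites are met for each ball $B \in \cW$. Recall that \eqref{ind_hyp} states exactly the desired conclusion --- namely $\Gamma_{\ell_\old}(x,f,M) \subset \Gamma_{E \cap \frac{6}{5}B}(x,f,C_\old M)$ for all $x \in \frac{6}{5}B$ and $M > 0$ --- provided that (i) the point $x$ at which we apply it lies in $\frac{6}{5}B$, and (ii) $\cC(E|6B) \leq K-1$. The first is trivially satisfied since the statement of Lemma~\ref{old_fp_lem} quantifies over $x \in \frac{6}{5}B$. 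So the entire content of the proof reduces to verifying the complexity bound $\cC(E|6B) \leq K-1$ for every $B \in \cW$.

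For this, I would invoke property (d) of the Main Decomposition Lemma (Lemma~\ref{mdl}), which asserts that for each $B \in \cW$, either $\#(6B \cap E) \leq 1$ or $\cC(E|6B) < K$. In the second case we are done immediately, since complexity is integer-valued and hence $\cC(E|6B) < K$ gives $\cC(E|6B) \leq K-1$. In the first case, where $\#(6B \cap E) \leq 1$, I would argue that $\cC(E|6B) = 0 \leq K-1$ (using $K \geq 0$, which holds in the induction step). The reason is that when $6B$ contains at most one point of $E$, the set $\sigma(x, E \cap 6B)$ coincides with either $\sigma(x,\emptyset) = C_T^{-1}\cdots$ --- more precisely, $\sigma(x)$ restricted to the relevant scales is all of $\cP$ up to a harmless constant when $E \cap 6B = \emptyset$, or is the transversal-to-nothing set when $\#(E\cap 6B) = 1$; in either case, by Definition~\ref{comp_defn} there can be no nested intervals witnessing positive complexity, because the transversality status of $\tau_{x,\delta}(\sigma(x))$ with respect to any fixed subspace cannot jump from $R_\lbl$-transverse to non-($R_\med$-transverse) when $\sigma(x)$ is already ``full'' at all scales. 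Thus $\cC_{\cX_x, C_*\diam(6B), R_\lbl}(\sigma(x)) = 0$ for all $x \in 6B$, so $\cC(E|6B) = 0$.

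Having established $\cC(E|6B) \leq K-1$ in both cases, I apply \eqref{ind_hyp} with the ball $B$ and the point $x \in \frac{6}{5}B$, obtaining $\Gamma_{\ell_\old}(x,f,M) \subset \Gamma_{E \cap \frac{6}{5}B}(x,f,C_\old M)$ for all $M > 0$, which is precisely the assertion of Lemma~\ref{old_fp_lem}.

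The main (and essentially only) obstacle is the degenerate case analysis when $\#(6B \cap E) \leq 1$: one must be careful that the complexity of $E$ on $6B$ genuinely vanishes, which requires unwinding the definitions of $\sigma(x, E \cap 6B)$ and of complexity at the level of Definition~\ref{comp_defn}. Everything else is bookkeeping: matching the hypotheses of the induction statement \eqref{ind_hyp} (the point lies in $\frac{6}{5}B$, the complexity of the six-fold dilate is at most $K-1$) against what Lemma~\ref{mdl}(d) hands us. I expect this to be a short proof, with the only subtlety being the observation that in the sparse case the complexity is exactly zero --- which itself may already have been recorded, or be implicit, in the setup surrounding Definition~\ref{comp_defn} and the triviality \eqref{triv_case}.
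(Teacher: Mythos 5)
Your plan correctly identifies that the proof is an application of the induction hypothesis \eqref{ind_hyp} together with the dichotomy from condition (d) of Lemma~\ref{mdl}, and the branch where $\cC(E|6B) < K$ is handled exactly as in the paper. However, your treatment of the degenerate branch $\#(E\cap 6B)\le 1$ contains a gap. You attempt to show $\cC(E|6B)=0$ by examining $\sigma(x,E\cap 6B)$, but the local complexity $\cC(E|6B)$ is defined (see the definition of $\cC(E|B)$ and Remark~\ref{comp_reform}) in terms of $\sigma(x)=\sigma(x,E)$, which sees \emph{all} of $E$, not merely $E\cap 6B$. When $E\cap 6B$ is nearly empty, the set $\sigma(x)$ for $x\in 6B$ is still constrained by points of $E$ lying outside $6B$ but within the scale window $(0,C_*\diam(6B)]$ (and $C_*$ is a large constant), so there is no obvious reason $\sigma(x)$ should be ``full at all scales'' or that the transversality status cannot change across scales. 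Establishing $\cC(E|6B)=0$ in this case would require a separate, nontrivial argument that you have not supplied.

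The paper sidesteps this entirely: in the branch $\#(E\cap 6B)\le 1$ one does not need any bound on $\cC(E|6B)$. Instead, one applies the trivial chain of inclusions \eqref{triv_case} directly to the ball $\tfrac{6}{5}B$: since $\#(E\cap\tfrac{6}{5}B)\le\#(E\cap 6B)\le 1$, the set $E\cap\tfrac{6}{5}B$ is itself one of the subsets $S$ with $\#(S)\le 1$ appearing in the definition of $\Gamma_0$, so
\[
\Gamma_{\ell_\old}(x,f,M)\subset\Gamma_0(x,f,M)\subset\Gamma_{E\cap\frac{6}{5}B}(x,f,M)\subset\Gamma_{E\cap\frac{6}{5}B}(x,f,C_\old M),
\]
using only $\ell_\old\ge 0$ and $C_\old\ge 1$. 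This gives the conclusion of the lemma immediately, with no complexity estimate needed. You should replace your complexity-vanishing argument with this direct application of \eqref{triv_case}.
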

\begin{proof}
If $\cC(E | 6B) < K$, the conclusion follows from \eqref{ind_hyp}. On the other hand, if $\#(E \cap 6B) \leq 1$, the result follows from \eqref{triv_case}. Condition (d) states that these two cases are exhaustive. \end{proof}

\subsection{Proof of the Main Decomposition Lemma}\label{proof_mdl}

We first prove property (a). By Lemma \ref{main-labeling-lemma} there exists a subspace $V \in \dti$ such that
 \[
 \sigma(x) \mbox{ is } (x, C_* \diam(B_0), R_\lbl)\mbox{-transverse to } V \mbox{ for all }x \in 100B_0.
 \] 
 This establishes property (a). The construction of $\cW$ is based on the following definition:

\begin{definition}
A ball $B \subset 100 B_0$ is OK if $\#(B \cap E) \geq 2$ and if there exists $z \in B$ such that $\sigma(z)$ is $(z, C_* \delta, R_\big)$-transverse to $V$ for all $\delta \in [\diam(B), \diam(B_0)]$.
\end{definition}

The OK property is \emph{inclusion monotone} in the sense that if $B \subset B' \subset 100 B_0$ and $B$ is OK then $B'$ is OK.

For each $x \in 2B_0$, let $r(x) := \inf \{r > 0:  B(x,r)\subset 100 B_0, \; B(x,r) \mbox{ is OK}\}$. Here, we write $B(x,r)$ to denote the closed Euclidean ball with center $x$ and radius $r$. Every ball that is contained in $100B_0$ and that contains $2B_0$ is OK, so the previous infimum is well-defined -- this also implies that $r(x) \leq 2 \diam(B_0)$ for all $x \in 2B_0$. If the radius of $B \subset 100 B_0$ is less than the quantity $\Delta := \frac{1}{2} \min \{ |x-y| : x,y \in E, x \neq y \} > 0$ then $\#(B \cap E) \leq 1$, and therefore $B$ is not OK -- in particular, this shows that $r(x) \geq \Delta$ for all $x \in 2B_0$. Let $B_x := B(x, r(x)/7)$ for $x \in 2B_0$. Then
\begin{equation}\label{inc1}
70B_x \subset 100 B_0, \quad \mbox{for } x \in 2B_0.
\end{equation}
Obviously the family $\cW^* = \{B_x \}_{x \in 2B_0}$ is a cover of $2B_0$.
\begin{lemma}\label{ok_lem}
If $B \in \cW^*$ then $8B$ is OK, and $6B$ is not OK.
\end{lemma}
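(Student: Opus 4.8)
The plan is simply to unwind the definition of the radius $r(x)$. Recall that $B = B_x = B(x, \frac{1}{7} r(x))$, so that $8B = B(x, \frac{8}{7} r(x))$ and $6B = B(x, \frac{6}{7} r(x))$. Since $\frac{8}{7} r(x) < 10 r(x)$, both of these balls are contained in $70 B_x$, and hence, by \eqref{inc1}, in $100 B_0$. The remainder of the argument rests on two facts already recorded in the discussion preceding the statement: the OK property is inclusion-monotone among balls contained in $100 B_0$, and $r(x) \geq \Delta > 0$.

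To prove that $8B$ is OK, I would fix $\epsilon$ with $0 < \epsilon < \frac{1}{7} r(x)$, which is possible because $r(x) > 0$. By definition, $r(x)$ is the infimum of the set of radii $r > 0$ for which $B(x,r) \subset 100 B_0$ and $B(x,r)$ is OK, so there exists such an $r$ with $r(x) \leq r < r(x) + \epsilon < \frac{8}{7} r(x)$. Then $B(x,r) \subset B(x, \frac{8}{7} r(x)) = 8B \subset 100 B_0$, and since $B(x,r)$ is OK, inclusion-monotonicity of the OK property forces $8B$ to be OK as well.

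For the second claim, I would argue by contradiction: suppose $6B$ is OK. We have already noted that $6B = B(x, \frac{6}{7} r(x)) \subset 100 B_0$, so the radius $\frac{6}{7} r(x)$ would then belong to the set whose infimum is $r(x)$; but $\frac{6}{7} r(x) < r(x)$, which contradicts the definition of $r(x)$. Hence $6B$ is not OK. There is no real obstacle here; the only points requiring attention are that $8B$ and $6B$ lie in $100 B_0$ — needed both to invoke inclusion-monotonicity of OK and to let $\frac{8}{7} r(x)$ compete in the infimum defining $r(x)$ — and that $r(x) > 0$, so that a valid choice of $\epsilon$ exists.
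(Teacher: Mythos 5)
Your proof is correct and follows essentially the same route as the paper's (which is stated very tersely); you have simply unwound the infimum characterization of $r(x)$ and the inclusion-monotonicity of the OK property in full detail, correctly noting that the positivity $r(x) \geq \Delta > 0$ is needed for the argument to go through.
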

\begin{proof}
We write $B = B(x, r(x)/7)$ for $x \in 2B_0$. According to \eqref{inc1}, $6B \subset 8B \subset 100B_0$. By definition of $r(x)$ as an infimum and the inclusion monotonicity of the OK property, the result follows.
\end{proof}
We apply the Vitali covering lemma to extract a finite subcover $\cW \subset \cW^*$ of $2 B_0$ with the property that the family of third-dilates $\{\frac{1}{3} B\}_{B \in \cW}$ is pairwise disjoint.

\begin{lemma}
$\cW$ is a Whitney cover of $2 B_0$.
\end{lemma}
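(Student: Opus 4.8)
The plan is to check the three defining properties of a Whitney cover from Definition~\ref{Whit_cov_def}. Properties (a) and (b) require no work: by its very construction $\cW$ is a finite subcover of $2B_0$ extracted from $\cW^*$, and the Vitali extraction was carried out precisely so that the third-dilates $\{\tfrac13 B\}_{B\in\cW}$ are pairwise disjoint. So the entire content of the lemma is property (c): whenever $B_1,B_2\in\cW$ satisfy $\tfrac65 B_1\cap\tfrac65 B_2\neq\emptyset$, their diameters must be within a factor of $8$.

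To prove (c), write $B_1=B_x=B(x,\tfrac17 r(x))$ and $B_2=B_y=B(y,\tfrac17 r(y))$ for $x,y\in 2B_0$, so that $\diam(B_1)/\diam(B_2)=r(x)/r(y)$ and it suffices to bound this ratio by $8$. By symmetry I may assume $r(y)\le r(x)$, and I argue by contradiction, supposing $r(x)\ge 8 r(y)$. The overlap hypothesis $\tfrac65 B_x\cap\tfrac65 B_y\neq\emptyset$ gives $|x-y|\le \tfrac{6}{35}(r(x)+r(y))\le\tfrac{12}{35}r(x)$, and then a one-line triangle-inequality estimate (using $r(y)\le\tfrac18 r(x)$, so any $z\in 8B_y$ has $|z-x|\le\tfrac17 r(x)+\tfrac{12}{35}r(x)=\tfrac{17}{35}r(x)<\tfrac67 r(x)$) shows $8B_y\subseteq 6B_x$. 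The dilation constants $8$ versus $6$ are exactly what make this inclusion go through once the radii differ by a factor of $8$; keeping this numerology straight is the only place any care is needed, and it is routine.

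Now I invoke the machinery already established for $\cW^*$. By Lemma~\ref{ok_lem}, $8B_y$ is OK (since $B_y\in\cW\subseteq\cW^*$), whereas $6B_x$ is \emph{not} OK. But the OK property is inclusion monotone, and by \eqref{inc1} we have $8B_y\subseteq 6B_x\subseteq 70B_x\subseteq 100B_0$; hence $6B_x$ would be forced to be OK, a contradiction. Therefore $r(x)<8 r(y)$, which yields (c) and completes the proof that $\cW$ is a Whitney cover of $2B_0$. The ``main obstacle,'' such as it is, is purely the dilation-constant bookkeeping in the second paragraph; all the substantive input (that $8B$ is OK and $6B$ is not for $B\in\cW^*$, inclusion monotonicity of OK-ness, and the containment $70B_x\subseteq 100B_0$) has already been supplied by the preceding lemmas.
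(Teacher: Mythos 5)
Your proof is correct and takes essentially the same approach as the paper: both reduce to checking condition (c), argue by contradiction assuming the radii differ by more than a factor of $8$, and show via a triangle-inequality estimate that $8B_{\mathrm{small}}\subset 6B_{\mathrm{large}}\subset 100B_0$, which contradicts Lemma~\ref{ok_lem} together with the inclusion monotonicity of the OK property. The only difference is notational (you work with $r(x)$ where the paper uses $r_1=\tfrac{1}{7}r(x)$); the arithmetic and the appeal to \eqref{inc1} are the same.
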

\begin{proof}
We have only to verify condition (c) in the definition of a Whitney cover (see Definition \ref{Whit_cov_def}). Suppose for sake of contradiction that there exist balls $B_j = B(x_j,r_j) \in \cW$ for $j=1,2$, with $\frac{6}{5} B_1 \cap \frac{6}{5} B_2 \neq \emptyset$ and $r_1 < \frac{1}{8} r_2$. Since $\frac{6}{5} B_1 \cap \frac{6}{5} B_2 \neq \emptyset$, we have $|x_1-x_2| \leq \frac{6}{5}r_1+ \frac{6}{5} r_2$. If  $z \in 8 B_1$ then $|z-x_1| \leq 8 r_1$, and therefore
\[
|z-x_2| \leq |z-x_1| + |x_1-x_2| \leq 8 r_1 + \frac{6}{5} r_1 + \frac{6}{5} r_2 \leq r_2 + \frac{3}{20} r_2 + \frac{6}{5} r_2 \leq 6 r_2.
\] 
Hence, $8 B_1 \subset 6 B_2$. By Lemma \ref{ok_lem}, $8 B_1$ is OK. Thus, by inclusion monotonicity, $6B_2$ is OK. But this contradicts Lemma \ref{ok_lem}. This finishes the proof by contradiction.
\end{proof}

We now establish conditions (b)-(d) in the Main Decomposition Lemma. 

Fix a ball $B \in \cW$. 

We will use the following principal condition: (PC) If  $\#(6 B \cap E) \geq 2$ then for all $x \in 6B$ there exists $\delta_x \in [ 6\diam(B), \diam(B_0)]$ so that $\sigma(x)$ is not $(x, C_* \delta_{x}, R_\big)$-transverse to $V$. This condition follows because $6B$ is not OK.

\noindent \emph{Proof of (b):} The inclusion  $B \subset 100B_0$ follows from \eqref{inc1}. For sake of contradiction, suppose that $ \diam(B) > \frac{1}{2} \diam(B_0)$. Since $B \cap B_0 \neq \emptyset$, we have $B_0 \subset 5B$. Therefore, $\#(5B \cap E) \geq \#(B_0 \cap E) \geq 2$. Fix a point $x \in B$. Then (PC) implies that the interval $[6 \diam(B),\diam(B_0)]$ is nonempty, thus $\diam(B) \leq \frac{1}{6} \diam(B_0)$, which gives the contradiction.

\noindent  \emph{Proof of (c):} Since $8B$ is OK, $\sigma(z)$ is $(z, C_* \delta, R_\big)$-transverse to $V$ for some $z \in 8B$ and all $\delta \in [ 8\diam(B),\diam(B_0)]$. If $x \in 8B$ then $|x-z| \leq 8 \diam(B) \leq \delta \leq \frac{c_1}{R_\big} \cdot (C_* \delta)$ (see \eqref{fix_c}), and so, by Lemma \ref{trans_lem},
\[
\sigma(x) \mbox{ is } (x, C_* \delta , 8 R_\big)\mbox{-transverse to } V  \mbox{ if } \delta \in [8\diam(B),\diam(B_0)].
\]
Any number in the interval $[\diam(B),\diam(B_0)]$ is comparable to a number in $[8\diam(B),\diam(B_0)]$ up to a multiplicative  factor of at most $8$. Hence, by Lemma \ref{lem00}, $\sigma(x)$ is $(x, C_* \delta ,  8^{m+1} R_\big)$-transverse to $V$ for all $\delta \in [\diam(B),\diam(B_0)]$. Since $R_\huge \geq 8^{m+1} R_\big$ (see \eqref{fix_c}), this implies (c).

\noindent  \emph{Proof of (d):}  Suppose that $\#(6B \cap E) \geq 2$ and set $J := \cC(E|6B)$. According to the formulation of complexity in Remark \ref{comp_reform}, there exist intervals $I_1 > I_2 > \cdots > I_J$ in $(0, 6  \diam(B)]$, subspaces $V_1, \cdots, V_J \subset \cP$, and a point $z \in 6B$, such that\\
(A) $\tau_{z,r(I_j)} ( \sigma(z))$ is $(z, C_*, R_\lbl)$-transverse to $V_j$, and\\
(B) $\tau_{z,l(I_j)} ( \sigma(z))$ is not $(z, C_* , R_\med)$-transverse to $V_j$, for $1 \leq j \leq J$, where $R_\med = 256DR_\lbl$.

Since $B \cap B_0 \neq \emptyset$ and $\diam(B) \leq \frac{1}{2} \diam(B_0)$ (see (b)) it follows that $6B \subset 5 B_0$. Hence, $z \in 5B_0$. 

Since $\#(6B \cap E) \geq 2$, (PC) implies that there exists $\delta_z \in [6 \diam(B), \diam(B_0)]$ with
\begin{equation}
\label{eqn12}
\sigma(z) \mbox{ is not } (z, C_*\delta_z,R_\big)\mbox{-transverse to } V.
\end{equation}
We will now establish that (A) and (B) hold for $j=0$ with $I_0 := [ \delta_z,  \diam(B_0)]$ and $V_0 := V$. Since $V$ is a DTI subspace, $\tau_{z,l(I_0)} V = V$, and therefore, by rescaling \eqref{eqn12},
\begin{equation}
\label{a1}
\tau_{z,l(I_0)}(\sigma(z)) \mbox { is not } (z,C_*,R_\big)\mbox{-transverse to } V.
\end{equation}
On the other hand, from property (a) we learn that $\sigma(z)$ is $(z, C_* \diam(B_0), R_\lbl)$-transverse to $V$. Therefore, by rescaling,
\begin{equation}
\label{a2}
\tau_{z,r(I_0)}(\sigma(z)) \mbox { is } (z,C_*,R_\lbl)\mbox{-transverse to } V.
\end{equation}
The conditions \eqref{a1} and \eqref{a2} together imply (A) and (B) for $j=0$ (recall $R_\big \geq R_\med$).

Notice that $r(I_1) \leq  6 \diam(B) \leq  \delta_z =  l(I_0)$, thus $I_1 < I_0$. In conclusion,  $I_0 > I_1 > \cdots > I_J$ are subintervals of $(0, \diam(B_0)]$.

We produced intervals $I_0 > I_1 > \cdots > I_J$ in $(0, 5 \diam(B_0)]$ and subspaces $V_0,\cdots,V_J \subset \cP$, so that (A) and (B) hold for $j = 0,1,\cdots,J$. Since $z \in 5 B_0$, by the formulation of complexity in Remark \ref{comp_reform}, we have $\cC(E|5B_0) \geq J+1$. Since $\cC(E|5B_0) \leq K$, this completes the proof of (d).

Finally we define a collection of points $\{z_B\}_{B \in \cW}$ and polynomials $\{P_B\}_{B \in \cW}$ so as to establish properties (e)-(g).

\noindent \emph{Proof of (e):} We define the collection $\{z_B\}_{B \in \cW}$ to satisfy property (e). For all $B \in \cW$ such that $x_0 \in \frac{6}{5}B$ we set $P_B= P_0$. We define $P_B$ for the remaining balls $B \in \cW$ in the proof of (f) and (g) below.

\noindent  \emph{Proofs of (f) and (g):} If $x_0 \in \frac{6}{5}B$ then $z_B = x_0$ and $P_B = P_0$, in which case (f) and (g) are trivially true (note that $P_0 \in \Gamma_{\ell^\#}(x_0,f,1) \subset \Gamma_{\ell^\#-1}(x_0,f,1)$). Suppose instead $x_0 \notin \frac{6}{5}B$. Then $z_B \in \frac{6}{5}B \cap 2B_0$ and so $|x_0-z_B| \leq 2\diam(B_0)$. By Lemma \ref{gamma_trans_lem}, given that $P_0 \in \Gamma_{\ell^\#}(x_0,f,1)$, we can find $P_B \in \Gamma_{\ell^\# - 1}(z_B,f,1)$ with $ P_0 - P_B  \in C_T  \cB_{z_B, 2\diam(B_0)} \subset 2^m C_T \cB_{z_B,\diam(B_0)}$. We still have to arrange $P_0 - P_B \in V$ as in (g). Unfortunately, there is no reason for this to be true, and we will have to perturb $P_B$ to arrange this property. This is where we use condition (a), which implies that $\sigma(z_B)$ is $(z_B,5C_* \diam(B_0),R_\lbl)$-transverse to $V$. Therefore,
\[
\begin{aligned}
\cB_{z_B,  \diam(B_0)}/V \subset \cB_{z_B, 5C_* \diam(B_0)}/ V &\subset R_\lbl \cdot (\sigma(z_B) \cap \cB_{z_B, 5C_* \diam(B_0)})/V\\
&\subset R_\lbl \cdot (\sigma_{\ell^\#-1}(z_B) \cap \cB_{z_B, 5C_* \diam(B_0)})/V.
\end{aligned}
\]
Since $P_0 - P_B \in 2^m C_T \cB_{z_B, \diam(B_0)}$, the last inclusion implies we can find a bounded correction
\[
R_B \in 2^m C_T R_\lbl \cdot(\sigma_{\ell^\#-1}(z_B) \cap \cB_{z_B, 5C_* \diam(B_0)}),
\]
so that 
\[
R_B/V = (P_0-P_B)/V.
\]
That is,
\[
P_B + R_B - P_0 \in V.
\] 
Set $\widetilde{P}_B = P_B + R_B$. Then $\widetilde{P}_B - P_0 \in V$ and
\[
\widetilde{P}_B \in \Gamma_{\ell^\# - 1}(z_B,f,1) + 2^m C_T R_\lbl \sigma_{\ell^\# - 1}(z_B) \subset \Gamma_{\ell^\#-1}(z_B,f, 1 + 2^m C_T R_\lbl).
\]
Furthermore, 
\[
\begin{aligned}
P_0 - \widetilde{P}_B = (P_0 - P_B) - R_B & \in 2^m C_T \cB_{z_B, \diam(B_0)} + 2^m C_T R_\lbl \cB_{z_B, 5 C_* \diam(B_0)} \\
&\subset 2^m C_T  \cdot (1 +  R_\lbl \cdot (5C_*)^m) \cdot \cB_{z_B,\diam(B_0)}.
\end{aligned}
\]
Thus we have proven (f) and (g) for all $B \in \cW$ such that $x_0 \notin \frac{6}{5}B$, with $\widetilde{P}_B$ in place of $P_B$, and with $C_{**} =1 + 2^m C_T  \cdot (1 +  R_\lbl \cdot (5C_*)^m)$. This finishes the proof of Lemma \ref{mdl}.

\section{The Main Induction Argument II}\label{sec_fmcj2}

We return to the proof of the Main Induction Argument as laid out in section \ref{sec_ind}. Fix data $(B_0,x_0,E,f)$ as in section \ref{sec_ind}; thus, $\cC(E | 5 B_0) \leq K$, $\#(E \cap B_0) \geq 2$, $x_0 \in B_0$, and $f : E \rightarrow \R$. Our goal is to establish the inclusion \eqref{main-task} for a suitable choice of the parameters $\ell^\# = \ell^\#(K)$ and $C^\#=C^\#(K)$. Our argument will require conditions on $C^\#$ and $\ell^\#$ of the form
\begin{align}\label{eqn:consts1}
&\ell^\# \geq \ell_{\old} + \overline{\chi}, \\
\label{eqn:consts2}
&C^\# \geq \widehat{C} \cdot C_{\old},
\end{align}
where $\widehat{C} \geq 1$, $\overline{\chi} \geq 1$ are determined by $m$ and $n$, and with $\ell_{\old} = \ell^\#(K-1)$, $C_\old = C^\#(K-1)$ as in \eqref{ind_hyp}. Only at the end of the argument will we fix a choice of $\ell^\#$ and $C^\#$ as above.

We fix a polynomial $P_0 \in \Gamma_{\ell^\#}(x_0,f,1)$, and apply Lemma \ref{mdl} to the data $(B_0,x_0,K,E,f,\ell^\#,P_0)$. Through this we obtain a Whitney cover $\cW$ of $2B_0$, a DTI subspace $V \subset \cP$, and the families $\{P_B\}_{B \in \cW} \subset \cP$ and $\{z_B\}_{B \in \cW} \subset \R^n$; these new data satisfy conditions (a)--(g) of Lemma \ref{mdl}.

We introduce a Whitney cover $\cW_0$ of $B_0$ by setting
\begin{equation}\label{defn:W0}
\cW_0 := \{ B \in \cW : B \cap B_0 \neq \emptyset\}.
\end{equation}

Fix constants $A \geq 10$ and $\epsilon_A \in (0,1/300]$, determined by $m$ and $n$, and defined as follows:
\begin{equation}\label{defn:A}
\begin{aligned}
&A = 2 C_{\ref{lemma2}} \cdot C_*^m \cdot R_{\huge} \\
&\epsilon_A = 1/3A^2.
\end{aligned}
\end{equation}
Here, $C_{\ref{lemma2}}$ is the constant $C$ arising in Lemma \ref{lemma2}.

Our next result states that the polynomials $\{ P_B\}_{B \in \cW_0}$ are pairwise compatible.

\begin{lemma}\label{FMCJ_lem}
There exist constants $\overline{\chi} \geq 1$ and $\overline{C} \geq 1$, determined by $m$ and $n$, such that the following holds. Suppose that $\ell^\# \in \N$ and the family $(P_B)_{B \in \cW}$ are as in the statement of Lemma \ref{mdl}, and suppose that  $\ell^\# \geq \ell_{\old} + \overline{\chi}$. Then $P_B - P_{B'} \in \overline{C} \cdot C_{\old} \cdot \cB_{z_B,\diam(B)}$ for any $B,B' \in \cW_0$ with $(\frac{6}{5})B \cap (\frac{6}{5})B' \neq \emptyset$.
\end{lemma}

We will see that Lemma \ref{FMCJ_lem} follows easily from the next result.

\begin{lemma}\label{main_lem}
There exist constants $\chi \geq 1$, and $C \geq 1$, depending only on $m$ and $n$, such that the following holds. Suppose there exists a ball $\widehat{B} \in \cW_0$ satisfying $\diam(\widehat{B}) \leq \epsilon_A  \cdot \diam(B_0)$. Then 
\[
(\sigma_{\ell+1}(x) + \cB_{z_B, \diam(B)}) \cap V \subset C C_{\old} \cB_{z_B,\diam(B)}
\]
for any $B \in \cW_0$, $x \in 3B$, and $\ell \geq \ell_{\old} + \chi$.
\end{lemma}

Lemma \ref{main_lem} is difficult for subtle reasons: We know from condition (c) of the Main Decomposition Lemma that $\sigma(x)$ is $(x,\diam(B),R)$-transverse to $V$ for any $B \in \cW$ and $x \in 8B$, where $R = R_\huge \cdot (6 C_*)^m$. But it is not apparent why $V$ would also be transverse to $\sigma_\ell(x)$, which generally can be significantly larger than $\sigma(x)$. The key point in the proof of this proposition is that we are able to use the validity of the Local Finiteness Principle on the balls $B$ in $\cW$ to establish a two-sided relationship between the sets $\sigma(x)$ and $\sigma_{\ell^*}(x)$ (for sufficiently large $\ell^*$) as long as we are willing to ``blur'' these sets at a lengthscale larger than $\diam(B)$. Since transversality is stable under ``blurrings'' (e.g., see Lemma \ref{lem000}), the result will follow.

The proof of Lemma \ref{main_lem} is the most technical part of the paper. We next explain how Lemma \ref{main_lem} can be used to prove Lemma \ref{FMCJ_lem}. After this we will establish a preparatory result, Lemma \ref{fip_lem}. We finally give the proof of Lemma \ref{main_lem} in section \ref{proof_main_lem}.

\begin{proof}[Proof of Lemma \ref{FMCJ_lem}]
We fix $\chi$ and $C$ in Lemma \ref{main_lem}, and define $\overline{\chi} = \chi + 3$. We suppose $\ell^\# \in \N$ is picked so that $\ell^\# \geq \ell_{\old} + \overline{\chi}$. We fix $B,B' \in \cW_0$ with $\frac{6}{5} B \cap \frac{6}{5}B' \neq \emptyset$.

Consider the following two cases for the Whitney cover $\cW_0$  defined in \eqref{defn:W0}: 
\begin{itemize}
\item \emph{Case 1:} $\diam(B) > \epsilon_A \diam(B_0)$ for all $B \in \cW_0$.
\item \emph{Case 2:} There exists $\widehat{B} \in \cW_0$ with $\diam(\widehat{B}) \leq \epsilon_A \diam(B_0)$.
\end{itemize}

In Case 1, we apply condition (f) in Lemma \ref{mdl} to obtain
\begin{equation}\label{eqn:801}
P_B - P_{B'} = (P_B - P_0) + (P_0 - P_{B'}) \in C_{**} \cB_{z_B,\diam(B_0)} + C_{**} \cB_{z_{B'},\diam(B_0)}.
\end{equation}
Because $z_B,z_{B'} \in 2B_0$, we have $|z_B-z_{B'}| \leq 2 \diam(B_0)$. So by \eqref{trans_norm}, we have 
\begin{equation}\label{eqn:802}
\cB_{z_{B'},\diam(B_0)} \subset C 2^{m-1} \cB_{z_B,\diam(B_0)}.
\end{equation}
From $\diam(B) > \epsilon_A \diam(B_0)$ we conclude that 
\begin{equation}\label{eqn:803}
\cB_{z_B,\diam(B_0)} \subset (\epsilon_A)^{-m} \cB_{z_B,\diam(B)}.
\end{equation}
When put together, \eqref{eqn:801}, \eqref{eqn:802}, \eqref{eqn:803} give that $P_B - P_{B'} \in C_{**} \cdot (\epsilon_A)^{-m} (1+ C2^{m-1}) \cB_{z_B,\diam(B)}$. This completes the proof of Lemma \ref{FMCJ_lem} in Case 1.

Now suppose that Case 2 holds. By condition (g) in Lemma \ref{mdl}, we have
\[
P_B - P_{B'} = (P_B - P_0) + (P_0 - P_{B'}) \in V.
\]
By condition (f) in Lemma \ref{mdl}, we have $P_{B'} \in \Gamma_{\ell^\#-1}(z_{B'},f,C)$. By Lemma \ref{gamma_trans_lem}, there exists $\tilde{P}_B \in \Gamma_{\ell^\#-2}(z_{B}, f , C)$ with $\tilde{P}_B - P_{B'} \in C' \cdot \cB_{z_{B}, \diam(B)}$. Furthermore, since $\tilde{P}_B  \in \Gamma_{\ell^\#-2}(z_{B}, f , C) $ and $P_{B} \in \Gamma_{\ell^\#-1}(z_{B},f,C) \subset \Gamma_{\ell^\#-2}(z_{B},f,C)$, we have
\[
\tilde{P}_B - P_B \in 2C \cdot \sigma_{\ell^\#-2}(z_{B}).\]
Thus,
\[
\begin{aligned}
P_B - P_{B'} = (P_B - \tilde{P}_B) + (\tilde{P}_B - P_{B'}) &\in 2C \cdot \sigma_{\ell^\#-2}(z_{B}) +  C' \cdot \cB_{z_{B}, \diam(B)} \\
&\subset C'' \cdot (  \sigma_{\ell^\#-2}(z_{B}) + \cB_{z_{B}, \diam(B)} ),
\end{aligned}
\]
and hence
\[
P_B - P_{B'} \in C'' \cdot (  \sigma_{\ell^\#-2}(z_{B}) + \cB_{z_{B}, \diam(B)} ) \cap V.
\]
Note that $\ell^\# -3 \geq \ell_{\old} + \overline{\chi} - 3 = \ell_{\old} + \chi$. Thus, we can apply Lemma \ref{main_lem} to deduce that 
\[
(  \sigma_{\ell^\#-2}(z_{B}) + \cB_{z_{B}, \diam(B)} ) \cap V \subset C C_{\old} \cB_{z_B,\diam(B)}.
\]
Therefore, $P_B - P_{B'} \in C''' C_{\old} \cdot \cB_{z_B,\diam(B)}$, which concludes the proof of Lemma \ref{FMCJ_lem}.

\end{proof}

\subsection{Finiteness principles for set unions with weakly controlled constants}

Through the use of Lemma \ref{pou_lem2} and Helly's theorem we will obtain the following result: If a ball $\widehat{B} \subset \R^n$ is covered by a collection of balls each of which satisfies a Local Finiteness Principle, then $\widehat{B}$ satisfies a Local Finiteness Principle with constants that may depend on the cardinality of the cover. We should remark that we lack any control on the cardinality of the cover $\cW_0$ of $B_0$, and so this type of result cannot be used to obtain a Local Finiteness Principle on $B_0$ with any control on the constants. This lemma will be used in the next subsection, however, to obtain a local finiteness principle on a family of intermediate balls that are much larger than the balls of the cover, yet small when compared to $B_0$.

\begin{lemma}\label{fip_lem}
Fix $C_0 \geq 1$ and $\ell_0 \in \Z_{\geq 0}$. Let $\cW$ be a Whitney cover of a ball $\widehat{B} \subset \R^n$ with cardinality $N= \# \cW$. If the Local Finiteness Principle holds on $\frac{6}{5}B$ with constants $C_0$ and $\ell_0$, for all $B \in \cW$, then the Local Finiteness Principle holds on $\widehat{B}$ with constants $C_1$ and $\ell_1 :=  \ell_0 + \lceil \frac{\log(D \cdot N + 1)}{\log(D+1)} \rceil$, where $C_1 = C \cdot C_0$, and $C$ is determined only by $m$ and $n$ -- in particular, $C_1$ is independent of the cardinality $N$ of the cover.
\end{lemma}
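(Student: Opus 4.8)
The plan is to deduce the claim from the Gluing Lemma (Lemma~\ref{pou_lem2}), fed with local solutions produced by the Local Finiteness Principle on the balls $\frac{6}{5}B$, the combinatorial heart being a Helly argument carried out in the product space $\cP^{\cW}$. By homogeneity I may assume $M=1$ (one has $\Gamma_\ell(x,f,M)=M\cdot\Gamma_\ell(x,f/M,1)$), so I fix $P_0\in\Gamma_{\ell_1}(x_0,f,1)$ with $x_0\in\widehat B$ and aim to build $F\in C^{m-1,1}(\R^n)$ with $F=f$ on $E\cap\widehat B$, $J_{x_0}F=P_0$, and $\|F\|\le C_1(C_0,m,n)$. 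The exponent $\ell_1$ is chosen precisely so that $(D+1)^{\ell_1}\ge (DN+1)(D+1)^{\ell_0}$, which is exactly what $\ell_1-\ell_0=\lceil\log(DN+1)/\log(D+1)\rceil$ buys, since $(D+1)^{\lceil\log(DN+1)/\log(D+1)\rceil}\ge DN+1$.

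\emph{Step 1 (a compatible jet family).} For each $B\in\cW$ I would fix a base point $x_B\in\frac{6}{5}B$, normalized so that $x_B=x_0$ whenever $x_0\in\frac{6}{5}B$ (at least one such ball exists, as $\cW$ covers $\widehat B\ni x_0$). Working in $\cP^{\cW}\cong\cP^N$ (dimension $DN$), for each $S\subset E$ with $\#(S)\le (D+1)^{\ell_0}$ set
\[
\widetilde\cK(S):=\bigl\{(J_{x_B}G)_{B\in\cW}:G\in C^{m-1,1}(\R^n),\ G=f\text{ on }S,\ J_{x_0}G=P_0,\ \|G\|\le 1\bigr\}.
\]
Each $\widetilde\cK(S)$ is convex (a linear image of a convex family of functions), there are finitely many of them, and any subfamily $\widetilde\cK(S^{(1)}),\dots,\widetilde\cK(S^{(DN+1)})$ has a common point: put $T:=\bigcup_t S^{(t)}$, note $\#(T)\le (DN+1)(D+1)^{\ell_0}\le (D+1)^{\ell_1}$, use $P_0\in\Gamma_{\ell_1}(x_0,f,1)$ to get $G$ with $G=f$ on $T$, $J_{x_0}G=P_0$, $\|G\|\le 1$, and observe that $(J_{x_B}G)_B$ lies in every $\widetilde\cK(S^{(t)})$. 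Helly's theorem then produces $(Q_B)_{B\in\cW}\in\bigcap_S\widetilde\cK(S)$. This tuple has exactly the three features I need: (i) $Q_B\in\Gamma_{\ell_0}(x_B,f,1)$ for every $B$ (each admissible $S$ supplies a witness $G^S$ with $J_{x_B}G^S=Q_B$, $G^S=f$ on $S$, $\|G^S\|\le 1$, so $Q_B\in\Gamma_S(x_B,f,1)$; intersect over $S$); (ii) $Q_B=P_0$ whenever $x_0\in\frac{6}{5}B$ (then $x_B=x_0$, so $Q_B=J_{x_0}G^S=P_0$); and (iii) the $Q_B$ are mutually compatible: using $S=\emptyset$ (admissible, and $P_0$ viewed as a function has $\|P_0\|=0$, so $\widetilde\cK(\emptyset)\neq\emptyset$), one witness $G^\emptyset$ realizes all $Q_B=J_{x_B}G^\emptyset$ simultaneously, so for $\frac{6}{5}B\cap\frac{6}{5}B'\ne\emptyset$, where $\diam(B)$ and $\diam(B')$ are comparable and $|x_B-x_{B'}|\le C\diam(B)$, Taylor's theorem \eqref{taylor_thm} together with \eqref{eq:change_delta} give $|Q_B-Q_{B'}|_{x_B,\diam(B)}\le C'(m,n)$.

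\emph{Step 2 (glue and extend).} For each $B$, since $x_B\in\frac{6}{5}B$ and $Q_B\in\Gamma_{\ell_0}(x_B,f,1)$, the Local Finiteness Principle on $\frac{6}{5}B$ (in the form of Remark~\ref{reform_lfip}) yields $F_B\in C^{m-1,1}(\R^n)$ with $F_B=f$ on $E\cap\frac{6}{5}B$, $J_{x_B}F_B=Q_B$, and $\|F_B\|\le C_0$. By (i)--(iii) the family $\{F_B\}$ with base points $\{x_B\}$ satisfies the hypotheses of Lemma~\ref{pou_lem2} with $M_0:=\max(C_0,C')$; taking the partition of unity $\{\theta_B\}$ of Lemma~\ref{pou_lem1}, the function $F:=\sum_{B\in\cW}\theta_B F_B$ lies in $C^{m-1,1}(\widehat B)$, equals $f$ on $E\cap\widehat B$, and satisfies $\|F\|_{C^{m-1,1}(\widehat B)}\le C M_0$. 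Moreover $J_{x_0}F=\sum_B J_{x_0}(\theta_B)\odot_{x_0}J_{x_0}(F_B)=P_0$, because the terms with $x_0\notin\frac{6}{5}B$ drop out (there $\theta_B\equiv 0$ near $x_0$, so $J_{x_0}\theta_B=0$), for the surviving $B$ one has $J_{x_0}F_B=Q_B=P_0$ by (ii), and $\sum_B J_{x_0}\theta_B=J_{x_0}(1)=1$. Finally, applying Whitney's theorem \eqref{whit_cond} to the jet field $x\mapsto J_xF$ on $\widehat B$ — whose compatibility is furnished by \eqref{taylor_thm} on the convex domain $\widehat B$ — produces a global $\widetilde F\in C^{m-1,1}(\R^n)$ with $J_x\widetilde F=J_xF$ for all $x\in\widehat B$ and $\|\widetilde F\|\le C\|F\|_{C^{m-1,1}(\widehat B)}\le C_1(C_0,m,n)$; in particular $\widetilde F=f$ on $E\cap\widehat B$ and $J_{x_0}\widetilde F=P_0$, i.e.\ $P_0\in\Gamma_{E\cap\widehat B}(x_0,f,C_1)$. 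Crucially, $C_1$ is built only from $C_0$ and universal constants, hence independent of $N$.

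The delicate point — and the one that pins down the value of $\ell_1$ — is the bookkeeping in the Helly argument of Step~1: one must range over a \emph{single} subset $S$ applied to all balls at once (the sets $\widetilde\cK(S)\subset\cP^{\cW}$), not over tuples $(S_B)_{B\in\cW}$. With tuples, a Helly subfamily would force the witnessing function to interpolate $f$ on a set of size $\sim N^2(D+1)^{\ell_0}$, which is incompatible with $\ell_1=\ell_0+O(\log N)$. The single-subset formulation is what simultaneously yields the pairwise compatibility (all $Q_B$ are jets of one function $G^\emptyset$) and keeps the budget $(DN+1)(D+1)^{\ell_0}\le (D+1)^{\ell_1}$. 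Everything else — the gluing lemma, the inductive LFP on the $\frac{6}{5}B$, and the Whitney extension — is assembled routinely.
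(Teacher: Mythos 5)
Your argument reproduces the paper's proof essentially line by line: the same Helly argument in $\cP^N$ over a \emph{single} constraint set $S$ per convex body $\cK(S)$, the same extraction of a compatible tuple $(Q_B)_B$ with the witness $G^\emptyset$ giving pairwise compatibility, the same appeal to the local finiteness principle on $\frac{6}{5}B$ to produce the $F_B$, and the same gluing via Lemmas \ref{pou_lem1}--\ref{pou_lem2} followed by an extension from $\widehat{B}$ to $\R^n$. The only cosmetic difference is that you invoke Whitney's theorem for the final extension step where the paper cites ``a standard technique'' (e.g.\ Stein extension); this is immaterial.
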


\begin{proof}

Let $f : E \rightarrow \R$ and $M>0$. For any $B \in \cW$ and $x \in \frac{6}{5} B$ we have $\Gamma_{\ell_0}(x,f,M) \subset \Gamma_{E \cap \frac{6}{5} B}(x,f,C_0M)$ thanks to the Local Finiteness Principle on $\frac{6}{5}B$. Fix a point $x_0 \in \widehat{B}$. Our goal is to prove that
\begin{equation}
\label{local_fp1}
\Gamma_{\ell_1}(x_0,f,M) \subset \Gamma_{E \cap \widehat{B}}(x_0, f, C_1 M),
\end{equation}
for a constant  $C_1 \geq 1$, to be determined later.

For each $B \in \cW$, we fix $x_B \in \frac{6}{5}B$ so that
\begin{equation}\label{pts11}
x_B = x_{0} \iff x_{0} \in \frac{6}{5} B; 
\end{equation}
otherwise, if $x_0 \notin \frac{6}{5} B$ then $x_B$ is an arbitrary element of $\frac{6}{5}B$.

Fix an arbitrary element $P \in \Gamma_{\ell_1} (x_0, f,M)$.  We will define a family of auxiliary convex sets to which we will apply Helly's theorem and obtain the desired conclusion. The convex sets will belong to the vector space $\cP^N$ consisting of $N$-tuples of $(m-1)$-st order Taylor polynomials indexed by the elements of the cover $\cW$. For each $S \subset E$, the convex set $\cK(S, M) \subset \cP^N$ is defined by
\[
\begin{aligned}
\cK(S, M) := \{ (J_{x_B} F)_{B \in \cW} : F \in C^{m-1,1}(\R^n), \;  \| F \| \leq M, \; F = f \mbox{ on } S, \; J_{x_0} F  = P  \}
\end{aligned}
\]
If $\#(S) \leq (D+1)^{\ell_1}$ then $P \in \Gamma_{\ell_1}(x_0,f,M) \subset \Gamma_S(x_0,f,M)$. Thus, there exists $F \in C^{m-1,1}(\R^n)$ with $\| F \| \leq M$, $F = f$ on $S$, and $J_{x_0} F = P$. Therefore, $(J_{x_B} F)_{B \in \cW} \in \cK(S,M)$. In particular, $\cK(S,M) \neq \emptyset$ if $\#(S) \leq (D+1)^{\ell_1}$.

If $S_1,\cdots, S_{J} \subset E$, with $J :=  \dim(\cP^N) + 1 = D \cdot N  + 1$, then
\[
\bigcap_{j=1}^J \cK(S_j,M) \supset \cK( S, M), \mbox{ for } S = S_1 \cup \cdots \cup S_J.
\]
If furthermore $\#(S_j) \leq (D+1)^{\ell_0}$ for every $j$, then $\#(S) \leq J \cdot (D+1)^{\ell_0} \leq (D+1)^{ \ell_1}$, and consequently by the previous remark $\cK(S,M) \neq \emptyset$. Therefore, given arbitrary subsets $S_1,\cdots,S_J \subset E$ ($J = \dim(\cP^N) + 1$) with $\#(S_j) \leq (D+1)^{\ell_0}$ for each $j$, we have
\[
\bigcap_{j=1}^J \cK(S_j,M) \neq \emptyset.
\]
Therefore, by Helly's theorem,
\[
\cK :=  \bigcap_{\substack{S \subset E \\ \#(S) \leq (D+1)^{\ell_0}}} \cK(S,M) \neq \emptyset.
\]
Fix an arbitrary element $(P_B)_{B \in \cW} \in \cK$. By definition of $\cK$, \\
$(*)$ for any $S \subset E$ with $\#(S) \leq (D+1)^{\ell_0}$ there exists a function $F^S \in C^{m-1,1}(\R^n)$ with $\| F^S \| \leq M$, $F^S = f$ on $S$, $J_{x_0} F^S = P$, and $J_{x_B} F^S = P_B$ for all $B \in \cW$. From this condition we will establish the following properties:\\
(a) $P_B = P$ if $x_0 \in \frac{6}{5} B$, \\
(b) $| P_B - P_{B'} |_{x_B, \diam(B)} \leq C M$ whenever $\frac{6}{5} B \cap \frac{6}{5} B' \neq \emptyset$, \\
(c) for each $B \in \cW$ there exists $F_B \in C^{m-1,1}(\R^n)$ such that $\|F_B \| \leq C_0 M$, $F_B = f$ on $E \cap \frac{6}{5}B$, and $J_{x_B} F_B = P_B$.

For the proof of (a) and (b)  take $S = \emptyset$ in  $(*)$. Then $P_B = J_{x_B} F^\emptyset = J_{x_0} F^\emptyset = P$ whenever $x_0 \in \frac{6}{5}B$ (see \eqref{pts11}), which yields (a). For (b), note that $x_B \in \frac{6}{5} B$, $x_{B'} \in \frac{6}{5} B'$, and $\frac{6}{5} B \cap \frac{6}{5} B' \neq \emptyset$, and hence by the definition of Whitney covers, $\diam(B)$ and $\diam(B')$ differ by a factor of at most $8$. Thus, $|x_B - x_{B'} | \leq \frac{6}{5} \diam(B) + \frac{6}{5} \diam(B') \leq 11 \diam(B)$. Thus, by \eqref{eq:change_delta} and Taylor's theorem (see \eqref{taylor_thm}),
\[
\begin{aligned}
| P_B - P_{B'} |_{x_B, \diam(B) } &\leq 11^m | P_B - P_{B'} |_{x_B, 11 \diam(B) } \\
&= 11^m | J_{x_B} F^\emptyset - J_{x_{B'}} F^\emptyset |_{x_B, 11 \diam(B)} \\
&\leq 11^m C_T \| F^\emptyset \| \leq CM.
\end{aligned}
\]
Here, $C$ is determined only by $m$ and $n$.

For the proof of  (c), note that $(*)$ implies $P_B \in \Gamma_{\ell_0}(x_B, f,M)$ for each $B \in \cW$. By assumption, the Local Finiteness Principle holds on $\frac{6}{5}B$ with constants $C_0$ and $\ell_0$, and therefore $P_B \in \Gamma_{E \cap \frac{6}{5}B} (x_B, f, C_0 M)$ for each $B \in \cW$. This completes the proof of (c). 

Fix a partition of unity $\{\theta_B\}$ adapted to the Whitney cover $\cW$ as in Lemma \ref{pou_lem1}, and set $F = \sum_{B \in \cW} \theta_B F_B$. By use of properties (b) and (c), we conclude via Lemma \ref{pou_lem2} that (A) $\| F \|_{C^{m-1,1}(\widehat{B})} \leq C' \cdot C_0 \cdot M$ and (B) $F = f$ on $E \cap \widehat{B}$; here, $C'$ is determined by $m$ and $n$.  Since $\supp \theta_B \subset \frac{6}{5} B$, we learn that  $J_{x_0} \theta_B = 0$ if $x_0 \notin \frac{6}{5} B$; on the other hand,  $J_{x_0} F_B = J_{x_B} F_B = P_B = P$ if $x_0 \in \frac{6}{5} B$ (see \eqref{pts11}). Thus, if we compare the following sums term-by-term, we obtain the identity
\[
J_{x_0} F  =  \sum_{B \in \cW} J_{x_0} \theta_B \odot_{x_0} J_{x_0} F_B =  \sum_{B \in \cW} J_{x_0} \theta_B \odot_{x_0} P.
\]
Recall that $\sum_{B \in \cW} \theta_B = 1$ on $\widehat{B}$ and $x_{0} \in \widehat{B}$. Thus, $\sum_{B \in \cW} J_{x_0} \theta_B =  J_{x_0} (1) = 1$. Therefore, (C) $J_{x_0} F = P$. By a standard technique we extend the function $F \in C^{m-1,1}(\widehat{B})$ to a function in $C^{m-1,1}(\R^n)$ with norm bounded by $C \| F \|_{C^{m-1,1}(\widehat{B})} \leq C C'  C_0 M \leq C'' C_0 M$ -- by abuse of notation, we denote this extension by the same symbol $F$. Then (D) $\| F \| \leq C'' C_0 M$. Furthermore, (B) and (C) continue to hold for this extension. From (B),(C), and (D) we conclude that $P \in \Gamma_{E \cap \widehat{B}}(x_0, f, C'' C_0 M)$; here, $C''$ is a constant determined by $m$ and $n$. This finishes the proof of \eqref{local_fp1}, with $C_1 = C'' C_0$.

\end{proof}

\subsection{Proof of Lemma \ref{main_lem}}\label{proof_main_lem}

Let $\widehat{R} := C_*^m R_{\huge}$. From property (c) of Lemma \ref{mdl} (applied with $\delta = \diam(B)$), we have 
\begin{align}\label{eqn:uc0}
&\cB_{x,\diam(B)}/V \subset \widehat{R} \cdot (\sigma(x) \cap \cB_{x,\diam(B)})/V \\
\label{eqn:uc1}
&\sigma(x) \cap V \subset \widehat{R} \cdot \cB_{x,  \diam(B)} \qquad\qquad\qquad (x \in 8B, \; B \in \cW).
\end{align} 
To generate similar inclusions for $\sigma_\ell(x) \supset \sigma(x)$ we introduce the idea of ``keystone balls'' which are balls in the Whitney cover $\cW$ for which a local finiteness principle is valid on a dilate of the ball by a factor of $A \gg \max\{ C_*, R_{\huge} \}$. See  \eqref{defn:A} for the definition of $A$. Using this local finiteness principle, we can derive an upper inclusion on $\sigma_\ell(x) \cap V$ when $x$ belongs to a dilate of a keystone ball. This information is transferred to the non-keystone balls by exploiting the ``quasicontinuity'' of the sets $\sigma_\ell(x)$ (see Lemma \ref{gamma_trans_lem}) and by the fact that every ball in the cover is close to a keystone ball (see Lemma \ref{key_geom_lem} below).

\subsubsection{Keystone balls}

We first  introduce the notion of a \emph{keystone ball} of $\cW$.

\begin{definition}
\label{key_defn}
A ball $B^\# \in \cW$ is keystone if $\diam(B) \geq \frac{1}{2} \diam(B^\#)$ for every $B \in \cW$ with $B \cap A\cdot B^\# \neq \emptyset$. Let $\cW^\# \subset \cW$ denote the set of all keystone balls.
\end{definition}

\begin{lemma} \label{keystone_lem}
For each ball $B \in \cW$ there exists a keystone ball $B^\# \in \cW^\#$ with $B^\# \subset 3A B$, $\dist(B,B^\#) \leq 2A\diam(B)$, and $\diam(B^\#) \leq  \diam(B)$.
\end{lemma}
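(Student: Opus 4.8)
The plan is to run a greedy descent through the Whitney cover $\cW$. Set $B^{(0)} := B$. If $B^{(k)}$ is not keystone, then by Definition \ref{key_defn} there is a ball $B^{(k+1)} \in \cW$ with $B^{(k+1)} \cap A B^{(k)} \neq \emptyset$ and $\diam(B^{(k+1)}) < \frac{1}{2}\diam(B^{(k)})$; iterate. Writing $\rho_k$ and $c_k$ for the radius and center of $B^{(k)}$, the radii satisfy $\rho_{k+1} < \frac{1}{2}\rho_k$, hence $\rho_k < 2^{-k}\rho_0$ for $k \geq 1$. In particular the $B^{(k)}$ are pairwise distinct, so since $\cW$ is finite the descent terminates at some ball $B^\# := B^{(\kappa)}$, which by construction lies in $\cW^\#$.

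Next I would bound the displacement of the centers. From $B^{(j+1)} \cap A B^{(j)} \neq \emptyset$ one gets a common point $p$ with $|p - c_j| \leq A\rho_j$ and $|p - c_{j+1}| \leq \rho_{j+1}$, so $|c_j - c_{j+1}| \leq A\rho_j + \rho_{j+1} \leq (A+1)\rho_j$. Telescoping and using $\rho_j < 2^{-j}\rho_0$ (and $\rho_0 = \rho_0$),
\[
|c_0 - c_\kappa| \;\leq\; \sum_{j=0}^{\kappa-1} |c_j - c_{j+1}| \;\leq\; (A+1)\sum_{j=0}^{\infty} 2^{-j}\rho_0 \;=\; 2(A+1)\rho_0 \;=\; (A+1)\diam(B).
\]

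Finally the three claims follow. Since $A \geq 1$ (indeed $A \geq 10$), $\dist(B,B^\#) \leq |c_0 - c_\kappa| \leq (A+1)\diam(B) \leq 2A\diam(B)$. For any $q \in B^\#$ we have $|q - c_0| \leq \rho_\kappa + |c_\kappa - c_0| \leq \rho_0 + 2(A+1)\rho_0 = (2A+3)\rho_0 \leq 3A\rho_0$ using $A \geq 3$, so $B^\# \subset 3AB$. And $\diam(B^\#) = 2\rho_\kappa \leq 2\rho_0 = \diam(B)$ since $\rho_k$ is nonincreasing (with equality only in the degenerate case $\kappa = 0$, where $B^\# = B$ and all three assertions are immediate).

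The argument is entirely elementary; the only point requiring any care is the termination step — ruling out an infinite descent — which is immediate once one observes the diameters strictly decrease and $\cW$ is finite. The geometric bookkeeping is then routine, and the factor-$\frac{1}{2}$ in Definition \ref{key_defn} is exactly what makes the geometric series sum cleanly so the constants come out as $2A\diam(B)$ and $3AB$. I do not anticipate a genuine obstacle here.
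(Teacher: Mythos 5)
Your proof is correct and follows essentially the same strategy as the paper's: a greedy descent through $\cW$ that halves the diameter at each step and terminates by finiteness, followed by a geometric-series telescope to control total displacement. The only cosmetic difference is that you track center-to-center distances $|c_j - c_{j+1}|$, whereas the paper sums $\dist(B_{j-1},B_j)$ together with the intermediate diameters $\diam(B_j)$; both yield the same constants up to harmless slack, and both conclude with the observation $A \geq 10$.
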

\begin{proof}
If $B$ is itself keystone, take $B^\# = B$ to establish the result. Otherwise, let $B_1 = B$. Since $B_1$ is not keystone there exists $B_2 \in \cW$ with $B_2 \cap A B_1 \neq \emptyset$ and $\diam(B_2) < \frac{1}{2} \diam(B_1)$. Similarly, if $B_2$ is not keystone there exists $B_3 \in \cW$ with $B_3 \cap A B_2 \neq \emptyset$ and $\diam(B_3) < \frac{1}{2} \diam(B_2)$. We continue to iterate this process. As $\cW$ is finite, the process must terminate after finitely many steps. Thus we produce a sequence of balls $B_1, B_2, \cdots, B_J \in \cW$ with $B_{j} \cap A B_{j-1} \neq \emptyset$ and $\diam(B_j) < \frac{1}{2} \diam(B_{j-1})$ for all $j$, and with $B_J$ keystone. As $B_{j} \cap A B_{j-1}\neq \emptyset$ we have $\dist(B_{j-1},B_j) \leq \frac{A}{2} \diam(B_{j-1})$. Now estimate
\[
\begin{aligned}
\dist(B_1,B_J) &\leq \sum_{j=2}^J  \dist(B_{j-1},B_j) +  \sum_{j=2}^{J-1} \diam(B_j)  \leq \left(A/2 + 1 \right) \sum_{j=1}^J \diam(B_j)  \\
& \leq  (A + 2 ) \diam(B_1) \leq 2A\diam(B_1).
\end{aligned}
\]
Since $\diam(B_J) \leq \diam(B_1)$, we have $B_J \subset (2A+6) B_1 \subset 3A B_1$. We set $B^\# = B_J$ to finish the proof.
\end{proof}

We now define a mapping $\kappa : \cW_0 \rightarrow \cW^\#$ satisfying a few key properties. By hypothesis of Lemma \ref{main_lem}, there exists a ball $\widehat{B} \in \cW_0$ with $\diam(\widehat{B}) \leq \epsilon_A \diam(B_0)$,  where $\epsilon_A := 1/3 A^2$. By Lemma \ref{keystone_lem}, there exists a keystone ball $\widehat{B}^\#$ with $\widehat{B}^\# \subset 3 A \widehat{B}$ and $\diam(\widehat{B}^\#) \leq \diam(\widehat{B})$. To define the mapping $\kappa$, we proceed as follows: For each $B \in \cW_0$,
\begin{itemize}
\item If $\diam(B)  > \epsilon_A \diam(B_0)$ ($B$ is \emph{medium-sized}), set $\kappa(B) := \widehat{B}^\#$.
\item If $\diam(B) \leq \epsilon_A \diam(B_0)$ ($B$ is \emph{small-sized}), Lemma \ref{keystone_lem} yields a keystone ball $B^\#$ with $B^\# \subset 3A B$; set $\kappa(B) := B^\#$.
\end{itemize}

\begin{lemma}\label{key_geom_lem}
The mapping $\kappa : \cW_0 \rightarrow \cW^\#$ satisfies the following properties:  For any $B \in \cW_0$, (a) $\dist(B,\kappa(B)) \leq C_4 \diam(B)$, (b) $\diam(\kappa(B)) \leq \diam(B)$, and (c) $A \cdot \kappa(B) \subset 2 B_0$. Here, $C_4$ is a constant determined by $m$ and $n$.
\end{lemma}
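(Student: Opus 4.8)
\textbf{Proof plan for Lemma \ref{key_geom_lem}.}

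The plan is to verify the three properties by splitting into the medium-sized and small-sized cases according to the definition of $\kappa$, and in each case to chase the geometric estimates already packaged in Lemma \ref{keystone_lem}. For a \emph{small-sized} ball $B$ (so $\diam(B) \leq \epsilon^* \diam(B_0)$), the keystone ball $B^\# = \kappa(B)$ comes directly from Lemma \ref{keystone_lem}, which gives $\dist(B,B^\#) \leq 2A\diam(B)$ and $\diam(B^\#) \leq \diam(B)$; since $A = (3\epsilon^*)^{-1/2}$ depends only on $\epsilon^*$ (hence on $m,n$ once $\epsilon^*$ is fixed), this immediately yields (a) with $C_4 = 2A$ and also (b). For a \emph{medium-sized} ball $B$ (so $\epsilon^* \diam(B_0) < \diam(B) \leq \frac12 \diam(B_0)$ by condition (b) of Lemma \ref{mdl}), we set $\kappa(B) = \widehat{B}^\#$, and here we use the hypothesis of Lemma \ref{main_lem} that there is some $\widehat{B} \in \cW_0$ with $\diam(\widehat{B}) \leq \epsilon^* \diam(B_0)$, together with Lemma \ref{keystone_lem} applied to $\widehat{B}$, which gives $\widehat{B}^\# \subset 3A\widehat{B}$ and $\diam(\widehat{B}^\#) \leq \diam(\widehat{B}) \leq \epsilon^* \diam(B_0) < \diam(B)$, establishing (b) in this case.

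For property (a) in the medium-sized case, the point is that all the balls involved live inside a bounded dilate of $B_0$: since $B \in \cW_0$ we have $B \cap B_0 \neq \emptyset$ and $\diam(B) \leq \frac12\diam(B_0)$, so $B \subset 2B_0$; similarly $\widehat{B} \subset 2B_0$ and hence $\widehat{B}^\# \subset 3A\widehat{B} \subset$ a fixed dilate of $B_0$. Therefore $\dist(B,\widehat{B}^\#)$ is bounded by a constant multiple of $\diam(B_0)$, and since $\diam(B) > \epsilon^* \diam(B_0)$ we may absorb $\diam(B_0) \leq (\epsilon^*)^{-1}\diam(B)$ to conclude $\dist(B,\kappa(B)) \leq C_4 \diam(B)$ for a constant $C_4 = C_4(\epsilon^*)$. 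This handles (a) in both cases.

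Finally, for property (c) we must show $A \cdot \kappa(B) \subset 2B_0$. Using (a) and (b), the ball $\kappa(B)$ has $\diam(\kappa(B)) \leq \diam(B)$ and its center lies within distance $C_4\diam(B) + \diam(B)$ of $B$, hence within a controlled multiple of $\diam(B)$ of (say) the center of $B_0$; thus $A\cdot\kappa(B)$ is contained in a ball concentric with $B_0$ of radius at most $(C_4 + 1 + A)\diam(B) + \mathrm{rad}(B_0)$. Since $A$ is a fixed constant (depending on $\epsilon^*$) and $\diam(B) \leq \frac12\diam(B_0)$, this radius is at most $C(\epsilon^*)\diam(B_0)$, which is \emph{not} automatically bounded by $\mathrm{rad}(2B_0)$. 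I expect this to be the main obstacle: the containment $A\cdot\kappa(B) \subset 2B_0$ forces a genuine constraint relating $\epsilon^*$ (and hence $A$) to the geometry, and it is precisely here that one must invoke the smallness of $\epsilon^*$ — for small-sized $B$ we use $\diam(B)\leq \epsilon^*\diam(B_0)$ and $\diam(\kappa(B))\leq \diam(B)$ so that $A\cdot\kappa(B)$ has diameter at most $A\epsilon^*\diam(B) \leq A\epsilon^*\cdot\epsilon^*\diam(B_0) = \tfrac{1}{3}\epsilon^*\diam(B_0)$ (recalling $A^2 = (3\epsilon^*)^{-1}$), and its center is within $O(\epsilon^*\diam(B_0) \cdot (\text{stuff}))$ — wait, more carefully, $\dist(B,\kappa(B)) \leq 2A\diam(B) \leq 2A\epsilon^*\diam(B_0)$, and $2A\epsilon^* = 2(3\epsilon^*)^{-1/2}\epsilon^* = \tfrac{2}{3}(3\epsilon^*)^{1/2} \to 0$ as $\epsilon^* \to 0$, so for $\epsilon^*$ small enough all the relevant displacements are a small fraction of $\diam(B_0)$ and the containment $A\cdot\kappa(B)\subset 2B_0$ follows; the medium-sized case is handled by an analogous but simpler estimate using $\widehat{B}$ in place of $B$. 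This dependence on the smallness of $\epsilon^*$ is exactly why $\epsilon^*$ is declared a free parameter at the start of the subsection, to be fixed later.
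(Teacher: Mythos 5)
Your proof follows essentially the same route as the paper's: split into the medium- and small-sized cases from the definition of $\kappa$, and in each case read off the geometric estimates from Lemma \ref{keystone_lem}. Parts (a) and (b) are handled correctly and match the paper's argument (the paper's $C_4$ in the medium case is $27(\epsilon^*)^{-1}A$, obtained exactly as you describe by absorbing $\diam(B_0) \leq (\epsilon^*)^{-1}\diam(B)$). For part (c) you reach the right conclusion, but two things are off. First, a minor slip: the diameter of $A\cdot\kappa(B)$ is $A\,\diam(\kappa(B)) \leq A\,\diam(B) \leq A\,\epsilon^*\diam(B_0)$, not ``$A\epsilon^*\diam(B)$''; the extra factor of $\epsilon^*$ you inserted is harmless (it only makes the bound stronger) but it is wrong as written. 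Second, and more substantively, you appeal to ``$\epsilon^*$ small enough,'' whereas the paper's proof of (c) does not rely on smallness at all: it uses the \emph{exact} identity $3\epsilon^*A^2 = 1$ built into the definition $A = (3\epsilon^*)^{-1/2}$, which makes $A\,\kappa(B) \subset 3A^2 B \subset (1 + 3\epsilon^*A^2)B_0 = 2B_0$ (for small-sized $B \subset B_0$), and likewise $A\widehat{B}^\# \subset 3A^2\widehat{B} \subset 2B_0$ in the medium case. Since $\epsilon^* \leq 1/300$ is imposed up front, your ``small enough'' argument does go through numerically, but the paper's identity is the structural reason the factor $2$ appears, and it is worth isolating; the role of $\epsilon^*$ as a free parameter to be shrunk later comes from the requirement $A \geq 2C_3\widehat{R}$ in Lemma \ref{mainlem_1}, not from this lemma.
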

\begin{proof}
Suppose $B$ is medium-sized. Then $\kappa(B) = \widehat{B}^\#$. As $\diam(B) > \epsilon_A \diam(B_0)$ and $B \subset B_0$, we have $9 (\epsilon_A)^{-1} B \supset B_0 \supset \widehat{B}$; furthermore, $\widehat{B}^\# \subset 3A \widehat{B}$. Thus, $ \widehat{B}^\# \subset 27 (\epsilon_A)^{-1} A B$, which gives us (a) for $C_4 = 27 (\epsilon_A)^{-1} A = 81 A^3$. Also, $\diam(\widehat{B}^\#) \leq \diam(\widehat{B}) \leq \epsilon_A \diam(B_0)  < \diam(B)$, which establishes (b). By assumption $\widehat{B} \subset B_0$ and $\diam(\widehat{B}) \leq \epsilon_A \diam(B_0)$, which implies $3 A^2 \widehat{B} \subset (1 + 3 \epsilon_A A^2) B_0 = 2 B_0$. Thus, $A \widehat{B}^\# \subset 3 A^2 \widehat{B} \subset 2 B_0$, which gives (c).

Now suppose $B$ is small-sized. Then we defined $\kappa(B) = B^\#$, where $B^\#$ is related to $B$ as in Lemma \ref{keystone_lem}. In particular,  $\dist(B,B^\#) \leq  2A \diam(B)$ and $\diam(B^\#) \leq \diam(B)$, yielding (a) and (b). Furthermore, $B^\# \subset 3A B$. By assumption, $B \subset B_0$ and $\diam(B) \leq \epsilon_A \diam(B_0)$, which implies $3 A^2 B \subset (1 + 3 \epsilon_A A^2) B_0 = 2 B_0$. Thus, $A B^\# \subset 3 A^2 B \subset 2 B_0$, yielding (c).
\end{proof}

This completes the description of the geometric relationship between the balls of $\cW_0$ and keystone balls in $\cW$.  We will next need a lemma about the shape of $\sigma_\ell(z_{B^\#})$ for a keystone ball $B^\#$.
\begin{lemma}\label{lemma1}
Let $B^\# \in \cW$ be a keystone ball with $AB^\# \subset 2B_0$.  Let $\chi = \lceil \log (D \cdot (180A)^n  + 1)/\log(D+1) \rceil$, and let $\ell \in Z_{\geq 0}$ with $\ell \geq \ell_{\old} + \chi$. There exists a constant $C \geq 1$ determined by $m$ and $n$ such that the Local Finiteness Principle holds on $A B^\#$ with constants $CC_{\old}$ and $\ell$, namely, $\Gamma_{\ell}(x,f,M) \subset  \Gamma_{E \cap A B^\#}(x,f,CC_{\old} M) $ for all $x \in A B^\#$ and $M>0$. In particular, by taking $f=0$ and $M=1$, we have 
\[
\sigma_{\ell}(x) \subset CC_{\old} \sigma(x,E \cap A B^\#) \; \; \mbox{for any } x \in A B^\#.
\]
\end{lemma}

\begin{proof}

Let $\cW(B^\#)$ be the set of all balls in $\cW$ that intersect $A B^\#$. Since $\cW$ is a Whitney cover of $2B_0$ and $AB^\# \subset 2 B_0$, we have that $\cW(B^\#)$ is a Whitney cover of $A B^\#$. The Local Finiteness Principle holds on $\frac{6}{5}B$ for all $B \in \cW(B^\#)$, with constants $C_\old$ and $\ell_\old$ (see Lemma \ref{old_fp_lem}). Therefore, the Local Finiteness Principle holds on $A B^\#$ with the constants $C_1 = C \cdot C_{\old}$ and $\ell_1 = \ell_\old + \lceil \frac{\log (D \cdot N  + 1) }{\log(D+1)} \rceil$, where $N = \# \cW(B^\#)$; here, $C$ is a constant determined only by $m$ and $n$. See Lemma \ref{fip_lem}.

We prepare to estimate $N = \# \cW(B^\#)$ by a volume comparison bound.

For any $B \in \cW(B^\#)$, we have $\diam(B) \geq \frac{1}{2} \diam(B^\#)$ by definition of the keystone balls -- furthermore, we claim that $\diam(B) \leq 10A \diam(B^\#)$. We proceed by contradiction: Suppose  $\diam(B) > 10 A \diam(B^\#)$ for some $B \in \cW(B^\#)$. Then $B \cap AB^\# \neq \emptyset$ (by definition of $\cW(B^\#)$). Combining the previous two sentences gives $\frac{6}{5} B \cap B^\# \neq \emptyset$. Then $\diam(B) \leq 8 \diam(B^\#)$ by definition of a Whitney cover, which gives a contradiction. 

For any $B \in \cW(B^\#)$ we have $B \cap A B^\# \neq \emptyset$ and $\diam(B) \leq 10 A \diam(B^\#)$, and therefore $B \subset 30A B^\#$.

We estimate the volume of $\Omega := \bigcup_{B \in \cW(B^\#)}  \frac{1}{3} B$ in two ways. First, note that $\mbox{Vol}(\Omega) \leq \mbox{Vol}(30A B^\#) = (30A)^n \mbox{Vol}(B^\#)$. Next, using that the collection $\{\frac{1}{3} B\}_{B \in \cW}$ is pairwise disjoint, $N= \# \cW(B^\#)$, and $\diam(B) \geq \frac{1}{2} \diam(B^\#)$ for $B \in \cW(B^\#)$, we have
\[
\mbox{Vol}(\Omega) = \sum_{B \in \cW(B^\#)} 3^{-n} \mbox{Vol}(B) \geq  N 6^{-n} \mbox{Vol}(B^\#).
\]
We conclude that $N \leq (180A)^n$. Therefore, $\ell_1 = \ell_\old +\lceil \frac{\log (D \cdot N + 1) }{ \log (D+1)} \rceil   \leq \ell_\old + \chi \leq \ell$. Recall that the Local Finiteness Principle holds on $A B^\#$ with constants $C_1$ and $\ell_1$. Thus, the Local Finiteness Principle holds on $A B^\#$ with constants $C_1$ and $\ell$.
\end{proof}

\begin{lemma}\label{mainlem_1}
If $\ell \in \Z_{\geq 0}$ satisfies $\ell \geq \ell_{\old} + \chi$, and if $B^\# \in \cW$ is a keystone ball satisfying $AB^\# \subset 2B_0$, then
\begin{equation}\label{eqn:mainlem_1}
\sigma_\ell(z_{B^\#})  \cap V \subset C  C_{\old} \cB_{z_{B^\#}, \diam(B^\#)}.
\end{equation}
Here, the constant $\chi \geq 1$ is determined by $m$ and $n$ as in Lemma \ref{lemma1}, and $C \geq 1$ is determined by $m$ and $n$. \end{lemma}
\begin{proof}
By Lemma \ref{lemma1}, and Lemma \ref{lemma2},
\begin{equation}\label{eq:1}
\begin{aligned}
\sigma_{\ell}(z_{B^\#})  \cap C_0 C_{\old}\cB_{z_{B^\#}, A \diam(B^\#)} & \subset C_0 C_{\old} \cdot ( \sigma(z_{B^\#},E \cap AB^\#)\cap \cB_{z_{B^\#}, A \diam(B^\#)} ) \\
& \subset C_1 C_0 C_{\old} \cdot \sigma(z_{B^\#}) \quad (\ell \geq \ell_{\old} + \chi).
\end{aligned}
\end{equation}
Here, $C_0 = C_{\ref{lemma1}}$ and $C_1 = C_{\ref{lemma2}}$ are constants determined by $m$ and $n$.

The inclusion $\sigma(z_{B^\#}) \cap V \subset \widehat{R} \cB_{z_{B^\#},\diam(B^\#)}$ is a consequence of property (c) of Lemma \ref{mdl}; here, $\widehat{R} = C_*^m R_{\huge}$. Applying this inclusion and taking the intersection with $V$ on  each side of  \eqref{eq:1}, we obtain
\begin{equation}\label{eq:3}
\sigma_\ell(z_{B^\#}) \cap V \cap (C_0 C_{\old} \cB_{z_{B^\#}, A \diam(B^\#)}) \subset C_1 C_0 C_{\old} \widehat{R}  \cB_{z_{B^\#}, \diam(B^\#)}.
\end{equation}
Note that $ A \cB_{z_{B^\#}, \diam(B^\#)} \subset \cB_{z_{B^\#},A \diam(B^\#)}$ for $A \geq 1$. If $A > C_1 \widehat{R}$ then \eqref{eq:3} yields 
\[
\sigma_\ell(z_{B^\#}) \cap V \subset C_1 C_0 C_{\old} \widehat{R} \cdot \cB_{z_{B^\#}, \diam(B^\#)}.
\]
Recall that $A = 2 C_{\ref{lemma2}} \widehat{R}$; see \eqref{defn:A}. Therefore, $A > C_1 \widehat{R}$ and the above analysis applies. This completes the proof of \eqref{eqn:mainlem_1} with $C = C_1 C_0 \widehat{R}$, $C_1 = C_{\ref{lemma2}}$, $C_0 = C_{\ref{lemma1}}$.

\end{proof}

\subsubsection{Finishing the proof of  Lemma \ref{main_lem}}\label{fin_up_sec}

Fix $\chi$ as in Lemma \ref{mainlem_1}. Recall the  constants $A$ and  $\epsilon_A$, determined by $m$ and $n$, are defined in \eqref{defn:A}.

Fix $\widetilde{B} \in \cW_0$ and $\widetilde{x} \in 3\widetilde{B}$, and fix $\ell \geq \ell_{\old} + \chi$. 

Consider the keystone ball $B^\# = \kappa(\widetilde{B}) \in \cW$. As in Lemma \ref{key_geom_lem}, we have $\diam(B^\#) \leq \diam(\widetilde{B})$, $\dist(B^\#,\widetilde{B}) \leq C_4 \diam(\widetilde{B})$, and $A B^\# \subset 2 B_0$. By Lemma \ref{mainlem_1},
\begin{equation}
\label{eqn:501}
\sigma_\ell(z_{B^\#}) \cap V \subset C  C_{\old} \cB_{z_{B^\#}, \diam(B^\#)} \subset C  C_{\old} \cB_{z_{B^\#}, \diam(\widetilde{B})}.
\end{equation}

We now use condition (c) of Lemma \ref{mdl} for the ball $B^\# \in \cW$, the point $x=z_{B^\#}$, and the lengthscale $\delta = \diam(\widetilde{B}) \geq \diam(B^\#)$. This condition together with the inclusion $\sigma(z_{B^\#}) \subset \sigma_\ell(z_{B^\#})$ yields
\begin{equation}\label{eqn:502}
\cB_{z_{B^\#},\diam(\widetilde{B})}/V  \subset \widehat{R} \cdot (\sigma_\ell(z_{B^\#}) \cap \cB_{z_{B^\#},\diam(\widetilde{B})})/V, \quad \widehat{R} = R_{\huge} \cdot (C_*)^m.
\end{equation}

We prepare to shift the inclusions \eqref{eqn:501} and \eqref{eqn:502} from the basepoint $z_{B^\#}$ to the point $\widetilde{x} \in 3 \widetilde{B}$. As $z_{B^\#} \in \frac{6}{5} B^\#$ (see condition (e) of Lemma \ref{mdl}), we have 
\begin{equation}\label{eqn:stuff1}
|z_{B^\#} - \widetilde{x}| \leq \dist(B^\#, \widetilde{B}) + 3 \diam(\widetilde{B}) + \frac{1}{5} \diam(B^\#) \leq C_5 \diam(\widetilde{B}).
\end{equation}

By Lemma \ref{gamma_trans_lem}, \eqref{eq:change_delta}, and \eqref{eqn:stuff1}, we have
\begin{equation}\label{eqn:503}
\sigma_{\ell+1}(\widetilde{x}) + \cB_{z_{B^\#}, \diam(\widetilde{B})} \subset \sigma_{\ell}(z_{B^\#}) + \widetilde{C} \cdot \cB_{z_{B^\#}, \diam(\widetilde{B})},
\end{equation}
where $\widetilde{C}$ is a constant determined by $m$ and $n$.

We prepare to apply Lemma \ref{lem:stabv} to the convex subset $\Omega = \sigma_\ell(z_{B^\#})$ of the Hilbert space $X= (\cP,\langle \cdot, \cdot \rangle_{z_{B^\#},\diam(\widetilde{B})})$. We take $\lambda = \widetilde{C}$ in Lemma \ref{lem:stabv}. Note that the inclusions \eqref{eqn:501} and \eqref{eqn:502} imply the hypotheses (i), (ii) of Lemma \ref{lem:stabv} with $R = \widehat{R}$, $Z = C C_{\old}$. So we deduce that
\begin{equation}\label{eqn:504}
\left(\sigma_{\ell}(z_{B^\#}) + \widetilde{C} \cB_{z_{B^\#},\diam(\widetilde{B})} \right) \cap V \subset C C_{\old} \cdot(3 \widehat{R} \widetilde{C} + 1) \cB_{z_{B^\#},\diam(\widetilde{B})}.
\end{equation}
From \eqref{eqn:503} and \eqref{eqn:504}, we have
\begin{equation}\label{eqn:505}
(\sigma_{\ell+1}(\widetilde{x}) +  \cB_{z_{B^\#},\diam(\widetilde{B})}) \cap V \subset C C_{\old} \cdot \cB_{z_{B^\#},\diam(\widetilde{B})},
\end{equation}
where, as always, $C$ is a constant determined by $m$ and $n$.

Finally, note that  $C^{-1} \cdot \cB_{z_{\widetilde{B}},\diam(\widetilde{B})} \subset \cB_{z_{B^\#},\diam(\widetilde{B})} \subset C \cdot \cB_{z_{\widetilde{B}},\diam(\widetilde{B})}$; these inclusions follow from \eqref{trans_norm} and the estimate $|z_{\widetilde{B}} - z_{B^\#}| \leq C \diam(\widetilde{B})$. Therefore, \eqref{eqn:505} implies that
\[
(\sigma_{\ell+1}(\widetilde{x}) + \cB_{z_{\widetilde{B}},\diam(\widetilde{B})}) \cap V \subset C C_{\old} \cdot \cB_{z_{\widetilde{B}},\diam(\widetilde{B})},
\]
as desired. This finishes the proof of Lemma  \ref{main_lem}.

\section{The Main Induction Argument III: Putting it all together} \label{sec_glue}

Recall that our goal is to prove the inclusion \eqref{main-task}, stated as follows: For suitable constants $\ell^\# \in \Z_{\geq 0}$ and $C^\# \geq 1$, we have
\[
\Gamma_{\ell^\#}(x_0,f,1) \subset \Gamma_{E \cap B_0}(x_0,f, C^\#), \;\;\; \mbox{for all } x_0 \in B_0, \; f : E \rightarrow \R.
\]

We take $\ell^\#$ and $C^\#$ to satisfy \eqref{eqn:consts1} and \eqref{eqn:consts2} for constants $\overline{\chi}$ and $\widehat{C}$ to be defined momentarily. That is, $\ell^\# \geq \ell_{\old} + \overline{\chi}$ and $C^\# \geq \widehat{C} \cdot C_{\old}$. We choose $\overline{\chi}$ as in Lemma \ref{FMCJ_lem} so that this result is guaranteed to hold. We will choose $\widehat{C}$ later in the argument.

Once we prove the containment \eqref{main-task} as described above, we will have established the Local Finiteness Principle on $B_0$. This will complete the Main Induction Argument.  

Continuing with the argument outlined in the beginning of section \ref{sec_fmcj2}, we fix $P_0 \in \Gamma_{\ell^\#}(x_0,f,1)$. We apply Lemma \ref{mdl} to the data $(B_0, x_0, K, E, f, \ell^\#, P_0)$ to obtain a Whitney cover $\cW$ of $2B_0$, a DTI subspace $V \subset \cP$, and families $\{P_B\}_{B \in \cW}$ and $\{z_B\}_{B \in \cW}$. Recall that we defined in \eqref{defn:W0} the subcover $\cW_0 = \{ B \in \cW : B \cap B_0 \neq \emptyset \}$ of $\cW$; note that $\cW_0$ is a cover of $B_0$. 

Condition (f) in Lemma \ref{mdl} states that $P_B \in \Gamma_{\ell^\# - 1}(z_B,f,C)$ for all $B \in \cW$. By Lemma \ref{old_fp_lem} and by the fact that $\ell^\# -1 \geq \ell_\old$, we have $\Gamma_{\ell^\# - 1}(z_B,f,C) \subset \Gamma_{\ell_\old}(z_B,f,C) \subset \Gamma_{E \cap \frac{6}{5}B}(z_B,f,C \cdot C_{\old})$. So,
\[
P_B \in \Gamma_{E \cap \frac{6}{5}B}(z_B,f,C \cdot C_{\old}) \qquad (B \in \cW).
\]
By definition of the set $\Gamma_{E \cap \frac{6}{5}B}(\cdots)$, there exists $F_B \in C^{m-1,1}(\R^n)$ with
\begin{equation} \label{aaa1}
\left\{
\begin{aligned}
&F_B = f \mbox{ on } E \cap (6/5)B, \; J_{z_B} F_B = P_B, \mbox{ and }\\
&\| F_B \| \leq C \cdot C_{\old} \qquad (B \in \cW).
\end{aligned}
\right.
\end{equation} 
Since $\ell^\# \geq \ell_{\old} + \overline{\chi}$, we may apply Lemma \ref{FMCJ_lem} to conclude that
\begin{equation}
\label{aaa2}
\begin{aligned}
|J_{z_B} F_B - J_{z_{B'}} F_{B'}|_{z_B,\diam(B)} &= |P_B - P_{B'} |_{z_B, \diam(B)} \leq \overline{C} \cdot C_{\old} \\
& (B,B' \in \cW_0, \;  (6/5) \cdot  B \cap (6/5) \cdot B' \neq \emptyset).
\end{aligned}
\end{equation}
Let $\{\theta_B\}_{B \in \cW_0}$ be a partition of unity on $B_0$ adapted to the cover $\cW_0$ of $B_0$; see Lemma \ref{pou_lem1}. Define
\[
F = \sum_{B \in \cW_0} F_B \theta_B \mbox{ on }B_0.
\]
By Lemma \ref{pou_lem2} and by the conditions \eqref{aaa1} and \eqref{aaa2}, the function $F \in C^{m-1,1}(B_0)$  satisfies  $\| F \|_{C^{m-1,1}(B_0)} \leq C' \cdot C_{\old}$ and $F=f$ on $E \cap B_0$. 

If $x_0 \in \frac{6}{5}B$ for some $B \in \cW$ then  $z_B = x_0$ (by definition of the family $\{z_B\}$) and $P_B = P_0$ (by definition of the family $\{P_B\}$; see condition (e) in Lemma \ref{mdl}). Thus, $J_{x_0} F_B = P_0$ whenever $x_0 \in \frac{6}{5}B$. Therefore, 
\[
\begin{aligned}
J_{x_0} F &= \sum_{B \in \cW_0 : x_0 \in \frac{6}{5}B} J_{x_0} (F_B \theta_B) = \sum_{B \in \cW_0 : x_0 \in \frac{6}{5}B} J_{x_0} F_B \odot_{x_0} J_{x_0} \theta_B \\
& =  \sum_{B \in \cW_0 : x_0 \in \frac{6}{5}B} P_0 \odot_{x_0} J_{x_0} \theta_B = P_0 \odot_{x_0} 1 = P_0.
\end{aligned}
\]
We now extend the function $F$ to all of $\R^n$ by a classical extension technique (e.g., Stein's extension theorem). This gives a function $\widehat{F} \in C^{m-1,1}(\R^n)$ with $\| \widehat{F} \| \leq  C \| F \|_{C^{m-1,1}(B_0)} \leq \widehat{C} \cdot C_{\old}$ and $\widehat{F} = F$ on $B_0$; here, $\widehat{C}$ is a constant determined only by $m$ and $n$. In particular, $\widehat{F} = f$ on $E \cap B_0$ and $J_{x_0} \widehat{F} = P_0$ (since $x_0 \in B_0$). Thus, $P_0 \in \Gamma_{E \cap B_0}(x_0,f,\widehat{C} \cdot C_{\old})$. 

We take the constant $\widehat{C}$ in \eqref{eqn:consts2} as in the previous paragraph. Thus, as  $C^\# \geq \widehat{C} \cdot C_{\old}$, we have
\[
P_0 \in \Gamma_{E \cap B_0}(x_0,f,C^\#)
\] 
Since $P_0 \in \Gamma_{\ell^\#}(x_0,f,1)$ was arbitrary, this  finishes the proof of the containment \eqref{main-task} (if $\ell^\#$ and $C^\#$ satisfy \eqref{eqn:consts1} and \eqref{eqn:consts2}).

\subsection{The dependence of constants on complexity} \label{dep_sec}

In order to obtain the explicit dependence of the constants in Theorem \ref{main_thm2} on the complexity of $E$, we will need to show that the constants $\ell^\# = \ell^\#(K)$ and $C^\# = C^\#(K)$ in the Local Main Lemma for $K$ depend linearly and exponentially (resp.) on $K$; see Remark \ref{comment1}.

Our inductive proof of the Local Main Lemma for $K$ requires that we take $\ell^\#$ and $C^\#$ to be constants that satisfy \eqref{eqn:consts1} and \eqref{eqn:consts2}. That is, we only require that $C^\#(K) \geq \widehat{C} \cdot C^\#(K-1)$ and $\ell^\#(K) \geq \ell^\#(K-1) + \overline{\chi}$. By induction on $K$, we can choose $C^\# = \check{C}^K$ and $\ell^\# = \overline{\chi} \cdot K$, for a constant $\check{C}$ determined by $m$ and $n$. This completes the proof of the claim in Remark \ref{comment1}


\bibliography{mybib}{}
\bibliographystyle{amsplain}

\end{document}